\begin{document}

\newtheorem{theorem}{Theorem}[section]
\newtheorem{lemma}[theorem]{Lemma}
\newtheorem{proposition}[theorem]{Proposition}
\newtheorem{conjecture}[theorem]{Conjecture}
\newtheorem{corollary}[theorem]{Corollary}

\newtheorem*{wwc}{The weak generalised Weinstein conjecture}
\newtheorem*{swc}{The strong generalised Weinstein conjecture}

\theoremstyle{definition}
\newtheorem{definition}[theorem]{Definition}
\newtheorem{example}[theorem]{Example}
\newtheorem{xca}[theorem]{Exercise}

\theoremstyle{remark}
\newtheorem{remark}[theorem]{Remark}

\numberwithin{equation}{section}

\title{Contact foliations and generalised Weinstein conjectures}

\author{Douglas Finamore}
\address{Departament of Mathematics - ICMC, University of S\~ao Paulo}
\email{douglas.finamore@usp.br}
\thanks{This work was funded by Brazilian Coordena\c c\~ao de Aperfei\c coamento de Pessoal de N\'ivel Superior (CAPES), grant PROEX-11377206/D. I am deeply grateful to my advisor, Prof. Carlos Maquera, for all the advice and useful insight provided during the course of my graduation, of which this work is a direct result.}

\subjclass[2020]{Primary 37C85; Secondary 37C86, 53D10, 53E50.}


\dedicatory{to Camila}

\begin{abstract}

We consider contact foliations: objects which generalise to higher dimensions the flow of the Reeb vector field on contact manifolds. We list several properties of such foliations and propose two conjectures about the topological types of their leaves, both of which coincide with the classical Weinstein conjecture in the case of contact flows. We give positive partial results for our conjectures in particular cases -- when the holonomy of the contact foliation preserves a Riemannian metric, for instance -- extending already established results in the field of Contact Dynamics.

\end{abstract}

\maketitle

\section{Introduction}
\par The concept of $q$-contact structure, as considered in this paper, first appeared in Almeida's doctoral dissertation \cite{almeida_contact_2018}. It was conceived having in mind the problem of classification of Anosov actions of $\mathbb{R}^q$ and the Verjovsky Conjecture for actions (c.f. \cite{barbot_integrable_2011}). The idea was to define a structure that could replicate the proprieties of Anosov contact flows in higher dimensions. 
Basically, a $q$-contact structure consists of a collection of $q$ linearly independent $1$-forms $\lambda_1, \cdots, \lambda_q$ with the property that all the derivatives $d\lambda_i$ are non-degenerate in the intersection of their kernels.
In Almeida's work, particular focus was given to the \textit{contact action} of $\mathbb{R}^q$ induced by such an object on its ambient manifold and to matters revolving around the algebraicity of said action. In this paper, we expand upon the basic theory laid down in \cite{almeida_contact_2018} by focusing not on the contact action but on its underlying \textit{contact foliation}. This foliation can be seen as a direct generalisation to higher dimensions of the Reeb flow (also called contact flow) defined by a co-orientable contact structure. We are particularly interested in the dynamics of contact foliations, especially the existence of closed orbits.

\par The most famous question in the field of Contact Dynamics is the Weinstein Conjecture ($\mathrm{WC}$) \cite{weinstein_hypotheses_1979}. It states:
\begin{center}
    \textit{
     The Reeb vector field of every closed contact manifold admits a closed orbit.
    }
\end{center}
Though not yet proven in its full generality, the conjecture is known to hold in several particular cases: in dimension three \cite{taubes_seibergwitten_2007, hutchings_taubess_2009}; for PS-overtwisted contact forms \cite{albers_weinstein_2009} and, more generally, for overtwisted contact manifolds \cite{borman_existence_2015}; and finally, when the Reeb vector field is Killing for some metric on $M$ \cite{rukimbira_remarks_1993, banyaga_characteristics_1995}. Given the analogy between contact foliations and Reeb flows, it is reasonable that one asks oneself whether or not some Weinstein-like results exist for contact foliations as well. There are two straightforward generalisations to higher dimensions:
\begin{wwc}[$\mathrm{WGWC}$]
    Every $q$-dimensional contact foliation on a closed manifold has a leaf homeomorphic to $\mathbb{T}^l \times \mathbb{R}^{q-l}$, for some $l \geq 1$.
\end{wwc} 
\begin{swc}[$\mathrm{SGWC}$]
    Every $q$-dimensional contact foliation on a closed manifold has a leaf homeomorphic to $\mathbb{T}^q$.
\end{swc}

\par In the one-dimensional case (Reeb flows), all three conjectures are the same. We will show that, under special conditions on the $q$-contact structure, the $\mathrm{WGWC}$ and the $\mathrm{WC}$ are equivalent (cf. Theorem \ref{equivconjuniform}). It is currently an open question whether other equivalences exist under other conditions. Moreover, if any of the conjectures hold, then there can be no \textit{minimal} contact foliations, that is, contact foliations where all the leaves as dense subsets. This is clear for the $\mathrm{WC}$ and $\mathrm{SGWC}$. For the $\mathrm{WGWC}$, it will be a straightforward consequence (cf. Theorem \ref{equivconjmin}) of a primarily algebraic construction, carried out in Proposition \ref{contredux}, which allows us to reduce a minimal contact action on a closed manifold $M$ to a contact action \textit{by planes} on a manifold $M_0$, over which $M$ is a principal torus bundle.

\par Since the closure of any leaf always contains a minimal set, investigating leaf closures might lead to a positive answer to the strong and weak conjectures or eventually to a counter-example to them. For this to work, we need to ensure that the restriction of a contact foliation to an invariant set is also a \textit{contact} foliation. One sufficient condition for that to happen is that the invariant subset is a \textit{manifold of even codimension} in $M$ (cf. Lemma \ref{restevencod}).

\par This brings our interest to a class of foliations that is very well-behaved concerning leaf closures: Riemannian foliations. It turns out that for contact foliations, being Riemannian is equivalent to being isometric, i.e., the orbit foliation of an $\mathbb{R}^q$-action \textit{via isometries} (cf. Proposition \ref{Rcontacttransverseproperty}). It follows then, from classical results from Myers-Steenrod \cite{myers_group_1939} and Kobayashi \cite{kobayashi_fixed_1958}, that for Riemannian foliations the leaf closures have the necessary properties: they are submanifolds of even codimension. Following the steps laid down by Banyaga and Rukimbira in \cite{banyaga_characteristics_1995}, we can show that Riemannian contact foliations satisfy the $\mathrm{WGWC}$. Going even further, a reduction argument can be used to prove

\begin{theorem}\label{riemanniancase}
    On a closed manifold, the $\mathrm{SGWC}$ is satisfied by every Riemannian contact foliation.
\end{theorem}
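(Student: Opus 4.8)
The plan is to prove Theorem \ref{riemanniancase} by a reduction argument that leverages the structure theory of Riemannian foliations together with the already-established fact that Riemannian contact foliations satisfy the $\mathrm{WGWC}$. Let me sketch how I would set this up.

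First, I would recall the relevant structural facts. By Proposition \ref{Rcontacttransverseproperty}, being Riemannian is equivalent to being isometric, so the contact foliation $\mathcal{F}$ is the orbit foliation of an $\mathbb{R}^q$-action $\Phi$ by isometries of some metric $g$ on the closed manifold $M$. By Myers--Steenrod and Kobayashi (as cited), the closure of the acting group inside the isometry group $\mathrm{Isom}(M,g)$ is a \emph{compact} Lie group $\overline{G}$, and the leaf closures $\overline{L}$ are precisely the orbits of $\overline{G}$; in particular they are embedded submanifolds. The crucial additional input is Lemma \ref{restevencod}: since the leaf closures are submanifolds, I would verify that they have \emph{even codimension} in $M$, so that the restriction of $\mathcal{F}$ to a single leaf closure $N = \overline{L}$ is again a \emph{contact} foliation, now on the closed manifold $N$.

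Second, I would pass to a minimal leaf closure. Among all leaf closures, choose one, call it $N$; by construction $N$ is a closed, $\overline{G}$-invariant submanifold on which every leaf is dense, so the restricted foliation $\mathcal{F}|_N$ is a \emph{minimal} Riemannian contact foliation. This is exactly the situation the earlier machinery was built for: since $\mathcal{F}|_N$ is minimal, I invoke Proposition \ref{contredux} to reduce the minimal contact action on $N$ to a contact action \emph{by planes} on a quotient manifold $N_0$, exhibiting $N$ as a principal torus bundle $\mathbb{T}^k \hookrightarrow N \to N_0$. The combination of minimality with the torus-bundle structure forces the dimension count: a minimal isometric $\mathbb{R}^q$-action on a closed manifold whose orbits are dense must have its orbit closure equal to all of $N$, and the reduction should collapse the plane-action part, leaving $N$ itself homeomorphic to a torus $\mathbb{T}^q$ (the leaf we seek). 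Concretely, the dense leaf inside $N$ is a linear orbit in the torus fibre, so once $N_0$ is shown to be a point, $N \cong \mathbb{T}^q$ is the desired closed leaf.

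The main obstacle, and the heart of the argument, is controlling the base $N_0$ of the reduction from Proposition \ref{contredux} and proving it is trivial (a single point) in the minimal case. I expect this to require showing that the "contact action by planes" on $N_0$ inherited from a \emph{minimal} action on $N$ can have no room left: since every orbit in $N$ is dense and the fibres are compact tori, the induced foliation on $N_0$ must be both minimal and have zero-dimensional leaves, which forces $\dim N_0 = 0$. Making this rigorous means carefully tracking how the even-codimension hypothesis of Lemma \ref{restevencod} interacts with the torus-bundle dimensions from Proposition \ref{contredux}, and verifying that the plane-action factor contributes no transverse direction. Once $N_0$ is a point, identifying $N$ with $\mathbb{T}^q$ and concluding that this torus is a single leaf of $\mathcal{F}$ (hence of the original foliation on $M$) completes the proof of the $\mathrm{SGWC}$ in the Riemannian setting.
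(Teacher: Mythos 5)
Your proposal breaks at its first substantive step: the claim that leaf closures of a Riemannian contact foliation are submanifolds of \emph{even} codimension, so that Lemma \ref{restevencod} applies to them. This is false in general, already for $q=1$: on an irrational ellipsoid $E(a,b)\subset\mathbb{C}^2$ (a K-contact, hence isometric, contact flow on a closed $3$-manifold), the generic orbit closures are $2$-tori of codimension \emph{one}, and the restricted foliation on such a torus is an irrational linear flow, which is not a contact flow at all. So "choose a minimal leaf closure $N$ and restrict" does not put you back inside the class of contact foliations, and the whole machinery (Lemma \ref{restevencod}, Proposition \ref{contredux}, Theorem \ref{equivconjmin}) becomes unavailable. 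This is precisely the difficulty the paper's proof of Theorem \ref{closedorbitsriemanniancase} is engineered to avoid: even codimension is obtained there not for leaf closures but for \emph{zero sets of transverse Killing fields} (Proposition \ref{caramello}, going back to Kobayashi's theorem on fixed-point sets of isometries). The paper uses the compatible toric action of Proposition \ref{compactisoR}, the functions $\phi_{ij}=-\lambda_j(X_i)$ and their critical points (Proposition \ref{tangencycomp}), and then iterates: restrict to a component of the zero set of $\overline{X}_1$, which \emph{is} closed, saturated and even-codimensional, apply Lemma \ref{restevencod} there, find a critical point of $\phi_{2j}$ inside it, and so on, until all generators $X_1,\dots,X_q$ are tangent to $\mathcal{R}$ at two points, whose leaves are then closed. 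Your proposal has no substitute for this mechanism once the even-codimension claim for leaf closures falls; choosing a minimal set that happens to have even codimension is not something you can arrange a priori (producing one is essentially equivalent to the theorem itself, since closed leaves are exactly such minimal sets).

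There is also a structural impossibility in the second half of your plan. In Proposition \ref{contredux} the kernel rank satisfies $0<l<q$ strictly ($l=q$ would mean every leaf is already a compact torus), and the base has dimension $2n'+q-l$, where $2n'$ is the codimension of the restricted foliation in $N$; hence $\dim N_0\geq q-l>0$ and $N_0$ is \emph{never} a point. What minimality actually yields (Theorem \ref{equivconjmin}) is that the foliation on $N$ either is, or reduces to, a minimal contact foliation \emph{by planes}; to turn that into a contradiction you would still need to check that the reduced foliation on $N_0$ is again isometric and then invoke Theorem \ref{noplanefol}, steps your sketch neither states nor proves. The correct dichotomy in the salvageable direction is rather: an even-codimensional minimal set either has dimension $q$ (and is then a single closed leaf) or carries a minimal isometric contact foliation, which is excluded; but, as above, the hypothesis of even codimension is exactly what you cannot assume.
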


\par With the isometric case settled, one may ask what happens in the complete opposite situation, i.e., when the contact foliation is also \textit{Anosov}, so that instead of preserving distances, the holonomy transformations contract and expand simultaneously in different transverse directions. Since contact foliations first appeared in the setting of Anosov actions, it is only natural to ask if this class of foliations satisfy our conjectures. It turns out the problem is rather uninteresting, as the Anosov property alone almost implies the existence of closed orbits \cite[Theorem 5]{barbot_transitivity_2011}. Hence, every Anosov contact foliation on a closed manifold (in particular, the ones studied in \cite{almeida_contact_2018}) satisfy the $\mathrm{SGWC}.$

\subsection{Structure of the paper}
In Section \ref{contactfol}, we give definitions and examples and expand the work started in \cite[Section 3.2.1]{almeida_contact_2018} by proving several properties of contact foliations. In particular, when studying their holonomies, a new object -- which does not occur in the classical contact case -- appears: the characteristic foliation associated with each form $\lambda_i$. It is the orbit foliation of an $\mathbb{R}^{q-1}$-action entirely determined by $\lambda_i$ and the contact foliation, and its transversals become contact manifolds when equipped with the restriction of $\lambda_i$. In this section, we also introduce the two generalisations of the Weinstein conjecture and start pointing out scenarios where they hold or are equivalent to one another.

\par  In the second section, we also investigate \emph{uniform} $q$-contact structures, which are the ones such that all the derivatives $d\lambda_i$ are the same. Such objects were first studied by Bolle \cite{bolle_condition_1996}, who characterised them as compact submanifolds of a symplectic manifold satisfying what he calls the ``$C_p$ condition''. In short, Bolle's $C_p$ condition generalises the concept of a contact hypersurface to submanifolds of higher codimension. Our approach has the advantage of being intrinsic, without the need for the ambient symplectic manifold. 
The existence of a uniform $q$-contact structure imposes severe restrictions on the topology of the manifold on which it is defined. For instance, using Tischler's argument, we show that every such manifold fibres over the torus $\mathbb{T}^{q-1}$ (cf. Theorem \ref{uniformStructuresFibrations}).  
More than that, we prove that for uniform $q$-contact structures, $q \geq 2$, every characteristic foliation is a suspension of an $\mathbb{Z}^{q-1}$-action on $\mathbb{T}^{q-1}$, which is interesting because no contact action can be a suspension, so in a sense, the characteristic foliations of a uniform $q$-contact structure are the orbit foliations of ``maximal'' suspended sub-actions. Moreover, as a corollary to the fact that characteristic foliations are suspensions, we obtain equivalence between the $\mathrm{WC}$ and the $\mathrm{WGWC}$ for uniform contact foliations (cf. Theorem \ref{equivconjuniform}).

\par We end with Subsection \ref{taut}, which focuses on a property of contact foliations that plays an essential role in this work: their tautness. In our context, tautness (or harmonicity) is defined as the existence of a metric tensor for which all the leaves are minimal submanifolds, i.e., submanifolds whose normal curvature vanishes. 
Note that one-dimensional minimal submanifolds are simply geodesics, so this result generalises to higher dimensions the widely known fact that contact flows are geodesible. 

\par The last section is Section \ref{partres}, dedicated to partial results towards a positive answer to the $\mathrm{SGWC}$. We briefly prove that it is satisfied by Anosov flows and then develop a theory for \textit{isometric} contact foliations: ones that preserve a transverse Riemannian metric. Tautness is of particular relevance when considering such foliations, allowing us to conclude several facts about the topology and geometry of the ambient manifold (cf. Section \ref{geotopisocon}), which in turn provide obstructions to the existence of isometric foliations on closed manifolds. One exciting result in this spirit is Theorem \ref{curverest}, which states that the sectional curvature of any Riemannian manifold supporting a contact action by isometries must be strictly positive at least one point. This further justifies our claim that the isometric case is the opposite of the Anosov one since hyperbolicity manifests itself geometrically in negative curvature. 

\par Section \ref{partres} is heavily inspired by the work of Rukimbira and Banyaga (cf. \cite{rukimbira_remarks_1993, banyaga_characteristics_1995}), and culminates in the proof of Theorem \ref{riemanniancase}. We show more, proving that an isometric contact foliation on a closed manifold must have at least \textit{two} closed orbits and that the set of closed orbits decomposes into a disjoint union of invariant even-codimensional submanifolds (cf. Theorem \ref{closedorbitsriemanniancase}). We do so in Subsection \ref{comptoract} by studying the closure of the contact action's image $\mathrm{Im}(\mathbb{R}^q \to \mathrm{Iso}(M))$ and making extensive use of a foliation-preserving action $\mathbb{T}^q \to \mathrm{Iso}(M)$ induced by such closure.

\subsection{Some remarks on nomenclature} 
To the best of the author's knowledge, the objects of interest in this paper were first defined by Almeida in \cite{almeida_contact_2018}. In his work, Almeida refers to $q$-contact structures as \textit{generalised $k$-contact structures}. We changed the index letter from $k$ to $q$ to avoid confusion with the established notion of a K-contact structure. Moreover, Almeida's notion of $q$-contact structure, taken from \cite{bolle_condition_1996, montano_integral_2008, tomassini_contact_2008}, is what we call here a \textit{uniform $q$-contact structure} (Definition \ref{uniqcontactstructures}). Our decision to rename these concepts was based, first, on the belief that our nomenclature is less cumbersome and more informative.
\par Moreover, while preparing this paper, we stumbled upon the work of Gaset et al. \cite{gaset_contact_2020}, in which yet another object called a $q$-contact structure is defined. In the sense of Gaset et al., a $q$-contact structure is even more general than the ones present in this paper since they do not require the kernels of the derivatives of the defining $1$-forms to be all the same. In light of that, we felt it made little sense to call the structures appearing in this work ``generalised," so we decided to abandon the adjective for good. 
\par Besides the $q$-contact structures in the sense of Gaset et al., several other geometric structures are similar and/or related to the structures studied here. We want to give special mention to the notions of Contact Pair (cf. \cite{bande_contact_2005}); Multicontat Structures (cf. \cite{vitagliano_l-algebras_2015}); Pluricontact and Polycontact Structures (cf. \cite{apostolov_toric_2020, van_erp_contact_2011}). The interested reader is referred to \cite[Section 3.2.2]{almeida_contact_2018} for a more detailed comparison between all these different concepts. In this paper, we also briefly explore the concept of an \textit{$f$-structure,} in the sense of Yano, which is somewhat related but not entirely similar to that of a $q$-contact structure, especially to the particular case of uniform $q$-contact structures. In particular, many of our results here have analogues in the case of $f$-structures, like the existence of Reeb vector fields, the characterisation of uniform $q$-contact manifolds (Theorem \ref{uniformStructuresFibrations}) and also our main result, Theorem \ref{riemanniancase}.

\section{Contact Foliations}\label{contactfol}

We begin by defining what is meant by contact foliation and all the other structures involved. We expose some of the basic properties of these foliations and introduce two generalisations to the Weinstein Conjecture. 

\subsection{Definition and Examples}
A contact foliation is a generalisation of the flow of the Reeb vector field determined by a co-orientable contact structure. The analogue structure determining the contact foliation is called a $q$-contact structure, defined as follows.

\begin{definition}[\textit{$q$-contact manifolds} \cite{almeida_contact_2018}]\label{qcontactstructure}
    Let $n,q$ be positive integers and consider a $2n+q$ dimensional differential manifold $M$. A \textbf{$q$-contact structure} on $M$ is a collection $\vec{\lambda} = (\lambda_1, \cdots, \lambda_q)$ of $q$ linearly independent non-vanishing $1$-forms $\lambda_i$, together with a splitting 
    \begin{equation*}
        TM = \mathcal{R} \oplus \xi
    \end{equation*}
    of the tangent bundle, satisfying the following conditions:
    \begin{itemize}
        \item[(i)] $\xi := \cap_i \ker\lambda_i$;
        \item[(ii)] for every $i$, the restriction $d \lambda_i\rvert_\xi$ is non-degenerate;
        \item[(iii)] for every $i$, one has $\ker d \lambda_i = \mathcal{R}.$
    \end{itemize}
A manifold endowed with such structure is called a \textbf{$q$-contact manifold} and denoted by $(M, \vec{\lambda}, \mathcal{R} \oplus \xi)$, or simply by $M$ when the context permits. We call the collection $\{\lambda_i\}$ an \textbf{adapted coframe} for the $q$-contact structure, and the $q$-form
\begin{equation*}
    \lambda := \lambda_1 \wedge \cdots \wedge \lambda_q \neq 0
\end{equation*}
is called the \textbf{characteristic form}. The bundles $\mathcal{R}$ and $\xi$ are called the \textbf{Reeb distribution} and \textbf{$q$-contact distribution,} respectively.
\end{definition}
\begin{remark}
	The $q$-contact and Reeb distributions, as well as the splitting $\mathcal{R} \oplus \xi$, have the same regularity class as the forms $\lambda_i$. \emph{From now on, we will always assume the splitting is at least of class $C^1$.}
\end{remark}
\par Note that, due to the linear independence of the $\lambda_i$, $\xi$ has constant rank $2n$; therefore, condition (ii) is equivalent to $d\lambda_i^n\rvert_\xi \neq 0$, or, in other words, to $(\xi, d\lambda_i)$ being a symplectic bundle over $M$. Linear independence of the adapted coframe is equivalent to the non-degeneracy of the characteristic form. From this, we can conclude that $dM_i:= \lambda \wedge d \lambda_i^n$ is a volume form on $M$ for every $i$. In particular, $M$ is orientable. As in the contact case, we can show that the $q$-contact distribution $\xi$ is maximally non-integrable, and the Reeb distribution $\mathcal{R}$ is a parallelisable bundle of constant rank $q$.
\begin{proposition}[\cite{almeida_contact_2018}]
    For any vector field $X \neq 0$ tangent to $\xi$, there is another vector field $Y$ tangent to $\xi$ such that $[X, Y]$ is not a section of $\xi$.
\end{proposition}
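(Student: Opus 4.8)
The plan is to reduce the bracket condition to the non-degeneracy hypothesis (ii) via the intrinsic formula for the exterior derivative. Recall that for any $1$-form $\omega$ and vector fields $X,Y$ one has
\begin{equation*}
    d\omega(X,Y) = X\bigl(\omega(Y)\bigr) - Y\bigl(\omega(X)\bigr) - \omega([X,Y]).
\end{equation*}
The first step is to observe that if both $X$ and $Y$ are tangent to $\xi$, then $\lambda_i(X) = \lambda_i(Y) = 0$ for every $i$ (by condition (i)), so the first two terms vanish and the identity collapses to
\begin{equation*}
    d\lambda_i(X,Y) = -\lambda_i([X,Y]).
\end{equation*}
Consequently, for $X,Y$ sections of $\xi$, the bracket $[X,Y]$ is itself a section of $\xi$ if and only if $d\lambda_i(X,Y) = 0$ for all $i$; equivalently, $d\lambda_1|_\xi(X,Y)=0$ already suffices to detect escape from $\xi$ through $\lambda_1$.

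Next I would invoke the non-degeneracy assumption. Since $X \neq 0$, fix a point $p$ with $X_p \neq 0$. By condition (ii), $d\lambda_1|_\xi$ is a non-degenerate $2$-form on the fibre $\xi_p$, so there exists a vector $v \in \xi_p$ with $d\lambda_1(X_p, v) \neq 0$. Because $\xi$ is a $C^1$ subbundle of $TM$, I can extend $v$ to a local $C^1$ vector field $Y$ tangent to $\xi$ with $Y_p = v$. Evaluating the reduced identity at $p$ gives
\begin{equation*}
    \lambda_1\bigl([X,Y]\bigr)_p = -\,d\lambda_1(X_p, v) \neq 0,
\end{equation*}
so $[X,Y]_p \notin \xi_p$, and therefore $[X,Y]$ is not a section of $\xi$. (If one wants a globally defined $Y$, extend by a bump function supported near $p$; the nonvanishing at $p$ is preserved.)

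This argument is essentially immediate once the Cartan formula is paired with hypothesis (ii), so I do not expect a genuine obstacle. The only points requiring mild care are choosing a point where $X$ does not vanish (so that the symplectic pairing can be applied) and extending the single tangent vector $v$ to a vector field tangent to the subbundle $\xi$, both of which are routine given the assumed $C^1$ regularity of the splitting $\mathcal{R}\oplus\xi$. The conceptual content is simply that, restricted to $\xi$, each $d\lambda_i$ is symplectic, and a symplectic form never annihilates a nonzero vector.
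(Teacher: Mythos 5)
Your proof is correct: the Cartan formula reduces the bracket condition to the non-degeneracy of $d\lambda_1\rvert_\xi$, and your extension step is sound, since multiplying a local extension $Y$ by a bump function $\chi$ only changes the bracket by the term $(X\chi)Y$, which stays tangent to $\xi$ and hence does not affect $\lambda_1([X,\cdot])$ at $p$. The paper states this proposition without proof, citing Almeida's thesis, and your argument is exactly the standard one that hypothesis (ii) of the definition is designed to enable, so there is nothing to flag.
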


\begin{remark}\label{fstructures}
In \cite{yano_structure_1982}, Yano began the study of manifolds $M$ supporting a $(1,1)$-tensor $f$ satisfying the equation $f^3+f = 0$. Such a structure induces a splitting $TM = \ker f \oplus \mathrm{Im} f$, which is said to be an \emph{$f$-structure with complemented frames} if $\ker f$ is parallelisable \cite{goldberg_normal_1970}. Given an $f$-manifold with complemented frames, additional hypotheses on the kernel $\ker f$ give rise to different geometric structures such as $\mathcal{K}$-manifolds and $\mathcal{S}$-manifolds. These last two are particular cases of $q$-contact structures as considered here (cf. \cite{blair_geometry_1970}). 

On the other hand, given a $q$-contact manifold $(M, \vec{\lambda}, \mathcal{R}\oplus \xi)$, for every defining $1$-form $\lambda_i$, the symplectic bundle $(\xi, d\lambda_i)$ admits a compatible almost complex structure $J_i$. If we define $f_i$ as equal to $J_i$ on $\xi$ and identically zero on $\mathcal{R}$, then it is clear that $f_i^3+f_i = 0$. So $M$ supports (at least) as many different $f$-structures with a complemented frame as there are different $2$-forms $d\lambda_i$. The concept of an $f$-structure with complemented frames is more general than that of a $q$-contact structure.   
\end{remark}

\begin{proposition}[\cite{almeida_contact_2018}]\label{reebfields}
    There is a unique collection of linearly independent vector fields $R_1, \cdots, R_q$ tangent to $\mathcal{R}$ satisfying, for all $i, j = 1, \cdots, q,$ the relations 
    \begin{itemize}
        \item[(i)] $\lambda_i(R_j) = \delta_{ij}$;
        \item[(ii)] $[R_i, R_j] = 0;$
        \item[(ii)] $\mathcal{R} = \mathrm{Span}\{R_1, \cdots, R_q\}.$
    \end{itemize}
\end{proposition}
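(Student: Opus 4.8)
The plan is to produce the frame $\{R_j\}$ directly from the normalisation and spanning conditions, which amount to pointwise linear algebra, and then to extract the commutation relations $[R_i,R_j]=0$ as a consequence of the defining axioms of the $q$-contact structure. First I would observe that the adapted coframe assembles into a bundle map $\Phi\colon \mathcal{R}\to M\times\mathbb{R}^q$, $v\mapsto(\lambda_1(v),\dots,\lambda_q(v))$. Since $\ker\Phi_p=\mathcal{R}_p\cap\bigcap_i\ker\lambda_i=\mathcal{R}_p\cap\xi_p=\{0\}$ and $\mathcal{R}$ has constant rank $q$, the map $\Phi$ is a fibrewise isomorphism. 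Hence there is a unique frame $R_1,\dots,R_q$ of sections of $\mathcal{R}$ with $\Phi(R_j)=e_j$, that is, $\lambda_i(R_j)=\delta_{ij}$; these are automatically linearly independent and span $\mathcal{R}$, so the normalisation and spanning conditions hold together with uniqueness, and the regularity of the $R_j$ is inherited from that of the splitting.

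It then remains to show that this canonical frame commutes. The crucial input is that each $R_i$ lies in $\mathcal{R}=\ker d\lambda_k$ for every $k$ (condition (iii) of Definition \ref{qcontactstructure}), so that $\iota_{R_i}d\lambda_k=0$. Cartan's formula gives
\[
\mathcal{L}_{R_i}\lambda_k=\iota_{R_i}d\lambda_k+d(\iota_{R_i}\lambda_k)=d(\delta_{ki})=0,
\]
and therefore $\mathcal{L}_{R_i}d\lambda_k=d\mathcal{L}_{R_i}\lambda_k=0$ as well. Applying the commutator identity $\iota_{[R_i,R_j]}=\mathcal{L}_{R_i}\iota_{R_j}-\iota_{R_j}\mathcal{L}_{R_i}$ to the form $d\lambda_k$ and using $\iota_{R_j}d\lambda_k=0$ yields
\[
\iota_{[R_i,R_j]}d\lambda_k=\mathcal{L}_{R_i}(\iota_{R_j}d\lambda_k)-\iota_{R_j}(\mathcal{L}_{R_i}d\lambda_k)=0,
\]
whence $[R_i,R_j]\in\ker d\lambda_k=\mathcal{R}$. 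On the other hand, the invariant formula for the exterior derivative, together with the fact that $\lambda_k(R_i)$ and $\lambda_k(R_j)$ are constant, gives
\[
\lambda_k([R_i,R_j])=R_i(\lambda_k(R_j))-R_j(\lambda_k(R_i))-d\lambda_k(R_i,R_j)=0
\]
for every $k$, so $[R_i,R_j]\in\bigcap_k\ker\lambda_k=\xi$. Since the splitting $TM=\mathcal{R}\oplus\xi$ forces $\mathcal{R}\cap\xi=\{0\}$, I conclude $[R_i,R_j]=0$.

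The main obstacle is precisely the commutativity: the normalisation and spanning conditions are no more than a dual-frame construction, whereas vanishing of the bracket genuinely uses the full strength of the axioms. The delicate point is to play the two conditions $\ker d\lambda_k=\mathcal{R}$ and $\xi=\bigcap_k\ker\lambda_k$ against each other — the first shows the bracket is a Reeb field, the second shows it is contact, and only their combination, through the transversality of the splitting, kills it. A minor technical check is that $C^1$ regularity of the splitting is enough to legitimise the Cartan and Lie-derivative manipulations, which it is, since every identity invoked is first order in the vector fields.
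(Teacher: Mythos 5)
Your proof is correct. Note that the paper itself supplies no argument for this proposition: it is quoted from Almeida's thesis \cite{almeida_contact_2018}, so there is no in-paper proof to compare against. Your two-step structure is the natural one and is fully consistent with what the paper does use: existence and uniqueness of the frame is pointwise linear algebra, relying only on $\mathcal{R}_p\cap\xi_p=\{0\}$ and $\operatorname{rank}\mathcal{R}=q$, and the commutativity argument correctly plays the two axioms against each other, showing $\iota_{[R_i,R_j]}d\lambda_k=0$ (so $[R_i,R_j]\in\ker d\lambda_k=\mathcal{R}$) and $\lambda_k([R_i,R_j])=0$ for all $k$ (so $[R_i,R_j]\in\xi$), whence the bracket vanishes by transversality of the splitting. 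Indeed, the auxiliary identities you establish along the way, $\mathcal{L}_{R_i}\lambda_k=0$ and $\mathcal{L}_{R_i}d\lambda_k=0$, are exactly the computations the paper records immediately after the proposition statement, so your route recovers those facts rather than assuming them. One small caveat: your closing remark that $C^1$ regularity ``legitimises'' all the manipulations is stated a bit casually --- the Lie derivative of the merely continuous form $d\lambda_k$ along a $C^1$ field deserves a word of justification --- but this is at the same level of rigour as the paper itself, which performs identical manipulations under the same standing $C^1$ assumption, so it is not a gap in substance.
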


\begin{remark}
    In \cite[Corollary 2.4]{cabrerizo_curvature_1990}, Cabrerizo et al. showed the existence of these fields in the case of $f$-structures using techniques other than the ones in \cite{almeida_contact_2018}. The vector fields $R_i$ have the same regularity class as the splitting $TM = \mathcal{R} \oplus \xi$. 
\end{remark}
     
\begin{definition}[\textit{Reeb vector fields}]
    The $q$ (unique) linearly independent vector fields from Proposition \ref{reebfields} are called the \textbf{Reeb vector fields} of the $q$-contact structure. 
\end{definition}
\par Each Reeb vector field preserves all the $1$-forms $\lambda_i$ and their derivatives:
\begin{align*}
    \mathcal{L}_{R_j}\lambda_i&= d(\iota_{R_j} \lambda_i) + \iota_{R_j} d\lambda_i = 0; \\
    \mathcal{L}_{R_j}d \lambda_i&= d(\iota_{R_j} d\lambda_i) + \iota_{R_j} d^2\lambda_i = 0,
\end{align*} 
and therefore, the Reeb flows preserve all of the volume forms $dM_i$, as well as the symplectic bundles $(\xi, d\lambda_i)$ over $M$. 
        
\par Pairwise commutativity assures the following action of the Euclidean space $(\mathbb{R}^q, +)$ is well-defined.
\begin{definition}[\textit{$q$-contact action}]
    Let $M$ be a $q$-contact manifold with Reeb vector fields $R_1, \cdots, R_q$. Denoting by $\phi_i^t$ the flow of $R_i$, we can define a locally free action of $\mathbb{R}^q$ on $M$ by 
\begin{align*}
	F: \mathbb{R}^q \times M &\rightarrow M\\
        	(t_1, \cdots, t_q, p) &\mapsto (\phi_1^{t_1}\circ \cdots \circ \phi_q^{t_q})(p).
\end{align*}
    This is called the \textbf{$q$-contact action on $M$} associated with the $q$-contact structure.
\end{definition}
\begin{remark}
   If the splitting $TM = \mathcal{R} \oplus \xi$ is $C^r$, then so are the Reeb fields, and therefore the action $F$ is of class $C^r$ as well.
\end{remark} 
We denote by $\mathcal{F}$ the underlying foliation of $F$, that is, the $q$-dimensional foliation of $M$ whose leaves are the orbits of $F$. 

\begin{definition}[\textit{Contact foliation}] 
    The foliation $\mathcal{F}$ underlying a contact action of $\mathbb{R}^q$ is called a \textbf{contact foliation.}
\end{definition}
It is possible for distinct adapted coframes $\vec{\lambda}$ and $\vec{\eta}$ to give rise to the same Reeb and $q$-contact distributions (cf. Lemma \ref{replem}) and, therefore, to define the same contact foliation. In this case, we say $\vec{\eta}$ is a \textbf{reparameterisation} of $\vec{\lambda}$. 

\begin{example}[Contact manifolds]
    Every contact manifold $(M, \xi)$ with a transversely orientable non-integrable distribution $\xi$ is a  $1$-contact manifold once a defining form $\lambda$ is chosen. Here, $\mathcal{R}$ is the span of the Reeb vector field, and the action $F$ is the one induced by the Reeb field's flow. We will refer to this as the \textbf{contact case.}
    
\end{example} 
  
\begin{example}[Structures on $\mathbb{R}^{2n+q}$]\label{canexm}
    There's a simple $q$-contact structure on $\mathbb{R}^{2n+q}$ with coordinates $(x_1, y_1, \cdots, x_n, y_n, z_1, \cdots, z_q)$, given by the $1$-forms 
    \begin{equation*}
        \lambda_i := d z_i + \sum_{j=1}^nx_jd y_j.
    \end{equation*}
    The Reeb vector fields are $R_i = \partial_{z_i}$, and therefore $\mathcal{R} = \mathrm{Span}(\partial_{z_1}, \cdots, \partial_{z_q})$. The contact action is translation on the last $q$ coordinates, and the contact foliation $\mathcal{F}$ consists of all planes parallel to $\{0\} \times \mathbb{R}^q$. The $q$-contact distribution is 
    \begin{equation*}
        \xi = \mathrm{Span}(\partial_{x_1}, Y_1, \cdots, \partial_{x_n}, Y_n),
    \end{equation*}
    where $Y_i :=\partial_{y_i} - x_i\sum_j\partial_{z_j}$, and the volume form $d M_i$ is the canonical volume form of $\mathbb{R}^{2n+q}$, for every $i$. 
    \par All of the above holds for 
    \begin{equation*}
        \lambda_i := d z_i + \alpha,
    \end{equation*}
    assuming $\omega = d\alpha$ is an exact non-degenerate form on $\mathbb{R}^{2n} \times \{0\}$. More generally, for a collection of such non-degenerate forms $\omega_i = d\alpha_i$, any choice $\{\alpha_{j_1}, \cdots, \alpha_{j_q}\} \in \{\alpha_1, \cdots, \alpha_l\}^q$ yields an adapted coframe
    \begin{equation*}
        \lambda_i := d z_i + \alpha_{j_i}
    \end{equation*}
    for a $q$-contact structure on $\mathbb{R}^{2n+q}$. In particular, the $d\lambda_i = \omega_{j_i}$ need not be equal.
\end{example}       

\begin{definition}[\textit{Uniform $q$-contact structures}]\label{uniqcontactstructures}
    An adapted coframe $\vec{\lambda}$ (and the $q$-contact structure it defines) is called \textbf{uniform} if it satisfies
    \begin{equation*}
    d\lambda_i = d\lambda_1
    \end{equation*}
    for all $1 \leq i \leq q$. A contact foliation $\mathcal{F}$ is said to be uniform if its adapted coframe is a reparameterisation of a uniform coframe.
\end{definition}

\begin{remark}
Both $\mathcal{K}$-structures and $\mathcal{S}$-structures (cf. Remark \ref{fstructures}) are particular cases of uniform $q$-contact structures. Uniform $q$-contact structures were also studied by Bolle in \cite{bolle_condition_1996}. He defines them as codimension $p$ submanifolds of a symplectic manifold $(W, \omega)$ satisfying what he calls ``$C_p$ condition''. The concepts are the same because every compact uniform $q$-contact manifold is a codimension $q$ submanifold of its \emph{symplectisation} satisfying the $C_q$ condition. The symplectisation of $(M, \vec{\lambda}, \mathcal{R} \oplus \xi)$ consists of the product $W := M \times \mathbb{R}^q$. Given coordinates $(x, t_1, \cdots, t_q)$ on $W$, if we equip it with the non-degenerate $2$-form 
\[
    \omega := \sum_{i=1}^q\left(dt_i\wedge\lambda_i + t_id\lambda_1\right),
\]   
then the pair $(W, \omega)$ is symplectic, and $M \times {0}$ satisfies the $C_q$ condition. It is also worth noting that not every contact foliation is uniform. The algebraic contact foliations constructed by Almeida in his doctoral thesis do not admit any uniform underlying coframe. See \cite[Proposition 3.3.5] {almeida_contact_2018} for further details. Every contact foliation constructed via an $f$-structure is uniform, as noted in Remark \ref{fstructures}.
\end{remark}
    
\par The following product-like constructions give us examples of closed $q$-contact manifolds other than closed contact manifolds.

\begin{example}[Products of contact manifolds]\label{productexample} 
    Suppose $(M_1, \alpha_1)$ and $(M_2, \alpha_2)$ are contact manifolds, with $X_i$ denoting the Reeb vector field of $\alpha_i$, and $\xi_i = \ker\alpha_i$ the associated contact structure. Then the forms
    \begin{align*}
        \lambda_1 &:= \pi_1^\ast\alpha_1 - \pi_2^\ast\alpha_2 \\
        \lambda_2 &:= \pi_1^\ast\alpha_1 + \pi_2^\ast\alpha_2 
    \end{align*}
    defined on the product $M := M_1 \times M_2$ by means of the canonical projections $\pi_i: M \to M_i$ are an adapted coframe for a $2$-contact structure on $M$. The $2$-contact distribution is 
    \begin{equation*}
    \xi := \ker\lambda_1 \cap \ker\lambda_2 = \pi_1^\ast\xi_1\cap\pi_2^\ast\xi_2,
    \end{equation*}
    on which the $d\lambda_i$ are non-degenerate due to the non-degeneracy of the $\alpha_i$. The Reeb fields are 
    \begin{align*}
        R_1 &:= \frac{1}{2}(\pi_1^\ast X_1 - \pi_2^\ast X_2), \\
        R_2 &:= \frac{1}{2}(\pi_1^\ast X_1 + \pi_2^\ast X_2).
    \end{align*}
\end{example}

\begin{example}[Flat $\mathbb{T}^l$-bundles over $q$-contact manifolds]\label{flatbundleexample}
    If $(M, \vec{\lambda}, \mathcal{R} \oplus \xi)$ is a $q$-contact manifold, the product $E := \mathbb{T}^l \times M$ supports a $(q+l)$- contact structure. More generally, if $\pi: E \rightarrow M$ is a principal $\mathbb{T}^l$-bundle supporting a flat connection, then $E$ admits a  $(q+l)$-contact structure. The construction, due to Almeida \cite{almeida_contact_2018}, is as follows. The connection induces a splitting $TE = H \oplus V$ into horizontal and vertical bundles. Flatness implies both bundles are involutive \cite[Lemma 3.1]{TondeurGeometryFoliations1997}. Let $\{X_1, \cdots, X_l\}$ be the commutative frame generating the $\mathbb{T}^l$-action on each fibre, so that $V = \mathrm{Span}\{X_1, \cdots X_l\}$. Define forms $\alpha_i$ on $M$ by setting 
    \begin{equation*}
        \begin{cases}
            \alpha_i(X_j) = \delta_{ij}, \\ \alpha_i\rvert_H = 0.
        \end{cases}
    \end{equation*}
    These are closed forms. Indeed, $[X_j, X_r] = 0$ for any choice of $j,r$. For any horizontal vector field $W$, the section $[X_j, W] $ is also horizontal \cite[Lemma 3.12]{nesterov_principal_2000}, hence
    \begin{align*}
        d \alpha_i(X_j, X_r) &= X_j\alpha_i(X_r) - X_r\alpha_i(X_j) - \alpha_i([X_j, X_r]) =0, \\
        d \alpha_i(X_j, W_1) &= X_j\alpha_i(W_1) - W_1\alpha_i(X_j) - \alpha_i([X_j, W_1]) =0.
    \end{align*}
    Since $H$ is involutive, given $W_1, W_2 \in H$,
    \begin{equation*} 
        d \alpha_i(W_1, W_2) = W_1\alpha_i(W_2) - W_2\alpha_i(W_1) - \alpha_i([W_1, W_2]) = 0,
    \end{equation*}
    hence $d\alpha_i\rvert_H \equiv 0$, and therefore each $\alpha_i$ is closed.
            
    \par Each fibre $H_p$ of $H$ is identifiable with the fibre $T_{\pi(p)}M = \mathcal{R}_{\pi(p)} \oplus \xi_{\pi(p)}.$ Thus $H$ has a splitting $H = \overline{\mathcal{R}} \oplus \widetilde{\xi}$, where $\overline{\mathcal{R}} \approx \mathcal{R}$ and $\xi \approx \widetilde{\xi}$. Write $\widetilde{\mathcal{R}} := V \oplus \overline{\mathcal{R}}$ so that we have 
    \begin{equation*}
        TE = \widetilde{\mathcal{R}} \oplus \widetilde{\xi}.
    \end{equation*}
    For any choice of indices $\{j_1, \cdots, j_l\} \subset \{1, \cdots, q\}^l$, we set     \begin{align*}
        \eta_{i} := \pi^\ast\lambda_{i}, ~~~~&\text{ for } 1 \leq i \leq q;\\
        \eta_{q+i} := \alpha_i + \pi^\ast\lambda_{j_i}, ~~~~&\text{ for } 1 \leq i \leq l.
    \end{align*}
    These forms are such that, for $i = 1, \cdots, q$
    \begin{equation*}
        \ker\eta_i = \pi^\ast\xi = \widetilde{\xi}\oplus \widetilde{\mathcal{R}};
    \end{equation*}
    and, for $i = 1, \cdots, l$
    \begin{equation*}
        \ker \widetilde{\eta}_{q+i} = \mathrm{Span}\{X_j\}_{j\neq i} \oplus \mathrm{Span}\{X - \lambda_{j_i}(\pi_\ast X)X_i; ~X \in \Gamma(H)\} \supset \widetilde{\xi}.
    \end{equation*}
    It then follows that
    \begin{align*}
        \bigcap_{i=1}^{q+l} \ker\eta_i 
        &= \left(\widetilde{\xi}\oplus \widetilde{\mathcal{R}}\right)\cap\left(\bigcap_{i=i}^l\mathrm{Span}\{X - \lambda_{j_i}(\pi_\ast X)X_i; ~X \in \Gamma(H)\}\right) \\ 
        &= \widetilde{\xi},
    \end{align*}
    and $(E, \vec{\eta}, \widetilde{\mathcal{R}} \oplus \widetilde{\xi})$ is therefore a  $(l+q)$-contact structure. Since the bundle $\overline{\mathcal{R}} \subset H$ is diffeomorphic to $\mathcal{R}$, we can think of the Reeb vector fields on $M$ as vector fields on $H$. Hence the Reeb vector fields of the $\eta_i$ are
    \begin{align*}
        \widetilde{R}_{i} := R_i, ~~~~&\text{ for } 1 \leq i \leq q;\\
        \widetilde{R}_{q+i} := \frac{1}{2}(X_i + R_{j_i}), ~~~~&\text{ for } 1 \leq i \leq l.
    \end{align*}
        
    Note that the restriction of the flat connection on $TE$ to $\widetilde{\mathcal{R}}$ is again flat, providing us with two complementary integrable sub-bundles $V$ and $\mathcal{R}$ of $\widetilde{\mathcal{R}}$. This means the leaves of $\widetilde{\mathcal{F}}$ are the product of the leaves of $\mathcal{F}$ with the torus $\mathbb{T}^l$.
\end{example}

In particular, by taking $M$ to be a closed contact manifold and choosing the dimension of the fibre appropriately, we can construct closed manifolds supporting $q$-contact structures for any $q$ larger than $1$. 

Finally, we have a classic example of algebraic nature from the theory of semi-simple Lie groups.

\begin{example}[Weyl Chamber Actions]
	Suppose $G$ is a semi-simple Lie group and $\mathfrak{a}$ is a Cartan subspace with centraliser $\mathfrak{a}\oplus \mathfrak{k}$. Let $K$ be a compact subgroup of $G$ associated with $\mathfrak{k}$ and $\Gamma$ a uniform lattice on $G$ acting freely on $G/K$. The exponentials of elements in $\mathfrak{a}$ act on $\Gamma\setminus(G/K)$ by right translation. This is called the \textbf{Weyl Chamber Action}. Almeida proved \cite[Theorem 1.0.2]{almeida_contact_2018} that this is a $q$-contact action of $\mathbb{R}^q$.

\end{example}

\subsection{Local Charts and Reparameterisations}

\par Now we begin to examine what the forms of an adapted coframe look like in local coordinates, providing properties to their coordinate functions. Later on, in subsection \ref{taut}, we will improve these results, showing that the defining forms can also be made into harmonic forms. This property will be essential when studying isometric contact actions in Section \ref{partres}. We will also discuss how one can change from one adapted coframe to another while preserving the Reeb and contact distributions, a process we call \textit{reparameterisation.}

\par In a foliated chart $\psi: U \to \mathbb{R}^{2n} \times \mathbb{R}^q,$ the tangent coordinates $(z_1, \cdots, z_q)$ can be chosen so that $\partial_{z_i} = R_i$ (cf \cite[Theorem 8.3]{BoothbyIntroductionDifferentiableManifolds1986}). Evaluating the forms $\lambda_i$  and their derivatives at these fields give rise to the following local characterisation for the adapted coframe $\vec{\lambda}$.
    
\begin{proposition}\label{lambdacoord}
    Around each point $p \in M$ there are coordinates $(x,y,z) \in \mathbb{R}^{2n}\times\mathbb{R}^q$, and, for each $1\leq i\leq q$ and $1\leq j\leq n$, functions $f^i_j, g^i_j: \mathbb{R}^{2n}\to\mathbb{R}$ such that:
    \begin{equation*}
        \lambda_i = d z_i + \sum_{j=1}^n\left( -f^i_j(x,y)d x_j + g^i_j(x,y) d y_j\right).
    \end{equation*}
\end{proposition}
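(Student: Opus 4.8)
The plan is to start from the foliated chart guaranteed by the cited result from Boothby, in which the transverse coordinates $(z_1, \dots, z_q)$ are adapted to the Reeb fields so that $\partial_{z_i} = R_i$. In such a chart the remaining coordinates $(x,y) = (x_1, \dots, x_n, y_1, \dots, y_n)$ span a local transversal to the foliation, and the full coordinate system is $(x,y,z) \in \mathbb{R}^{2n} \times \mathbb{R}^q$. The forms $\lambda_i$ may be expanded in this basis as
\begin{equation*}
    \lambda_i = \sum_k a^i_k \, dz_k + \sum_{j=1}^n \left( b^i_j \, dx_j + c^i_j \, dy_j \right),
\end{equation*}
with coefficient functions depending a priori on all of $(x,y,z)$. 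The first task is to pin down the $dz_k$ coefficients. Evaluating at the Reeb fields and using condition (i) of Proposition \ref{reebfields}, namely $\lambda_i(R_k) = \delta_{ik}$, together with $R_k = \partial_{z_k}$, gives immediately $a^i_k = \lambda_i(\partial_{z_k}) = \lambda_i(R_k) = \delta_{ik}$. Hence the $dz$-part of $\lambda_i$ is exactly $dz_i$, and we are left with
\begin{equation*}
    \lambda_i = dz_i + \sum_{j=1}^n \left( b^i_j \, dx_j + c^i_j \, dy_j \right),
\end{equation*}
which after relabelling $b^i_j = -f^i_j$ and $c^i_j = g^i_j$ already has the claimed shape except that the coefficients might still depend on $z$.

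The second and essential step is therefore to show that $f^i_j$ and $g^i_j$ depend only on the transverse variables $(x,y)$, not on $z$. This is where the Reeb-invariance of the forms does the work. From the computation recorded in the excerpt, each Reeb field satisfies $\mathcal{L}_{R_k}\lambda_i = 0$. Writing out the Lie derivative of the expression above along $R_k = \partial_{z_k}$, and noting that $\mathcal{L}_{\partial_{z_k}}$ of any of the coordinate $1$-forms $dz_l, dx_j, dy_j$ vanishes (they are closed and $\iota_{\partial_{z_k}}$ of them is constant), we get
\begin{equation*}
    0 = \mathcal{L}_{\partial_{z_k}}\lambda_i = \sum_{j=1}^n \left( \frac{\partial b^i_j}{\partial z_k} \, dx_j + \frac{\partial c^i_j}{\partial z_k} \, dy_j \right).
\end{equation*}
Since $dx_1, \dots, dx_n, dy_1, \dots, dy_n$ are pointwise linearly independent, every partial derivative $\partial b^i_j/\partial z_k$ and $\partial c^i_j/\partial z_k$ must vanish for all $k$. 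Thus the coefficient functions are independent of the tangential coordinates $z$, i.e.\ they are genuinely functions of $(x,y)$ alone, which is precisely the assertion $f^i_j, g^i_j : \mathbb{R}^{2n} \to \mathbb{R}$.

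I expect the only genuinely delicate point to be the bookkeeping of the Lie-derivative computation and the justification that one may treat the coordinate one-forms as $z_k$-invariant; but this is routine once the chart is set up, since in Cartesian-type coordinates $\mathcal{L}_{\partial_{z_k}} dx_j = d(\iota_{\partial_{z_k}} dx_j) + \iota_{\partial_{z_k}} d(dx_j) = 0$, and likewise for $dy_j$ and $dz_l$. One subtlety worth stating explicitly is that the chart produced by Boothby's theorem is only local, so the functions $f^i_j, g^i_j$ are defined on the image of the transversal, which we identify with an open subset of $\mathbb{R}^{2n}$; extending or flattening the domain to all of $\mathbb{R}^{2n}$ is a cosmetic matter handled by shrinking the chart. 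The non-degeneracy of $d\lambda_i\rvert_\xi$ and the splitting $\mathcal{R}\oplus\xi$ play no role in establishing the \emph{form} of the expression; they are automatically encoded once we know $\lambda_i$ has this shape, since $d\lambda_i = \sum_{j}\left(-df^i_j \wedge dx_j + dg^i_j \wedge dy_j\right)$ involves only the transverse differentials, confirming consistency with $\ker d\lambda_i = \mathcal{R}$.
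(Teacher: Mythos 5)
Your proof is correct and follows essentially the same route as the paper: a foliated chart with $\partial_{z_i} = R_i$, then evaluation of $\lambda_i$ on the Reeb fields to fix the $dz$-coefficients, and Reeb-invariance to kill the $z$-dependence of the transverse coefficients. Note that your Lie-derivative step is, via Cartan's formula, exactly the paper's ``evaluating the derivatives at these fields,'' i.e.\ the condition $\iota_{R_k}d\lambda_i = 0$ coming from $\ker d\lambda_i = \mathcal{R}$, so the two arguments are the same computation in different notation.
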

    
\par Motivated by the last proposition, one can ask whether $q$-contact structures are locally isomorphic or not, in the sense of a generalised Darboux Theorem. A partial result in this direction comes from the following proposition.
\begin{proposition}\label{localRepresentationCodimension2}
    Let $M$ be a manifold supporting a contact foliation of codimension 2, that is, $\dim M = 2 + q$. Then around each point of $M$ there are coordinates $(U; x,y,z_1, \cdots, z_q)$ and leaf-wise constant functions $a_i: U \to \mathbb{R}$ such that $\partial_xa_i$ is non-vanishing and 
    \begin{equation*}
        \lambda_i = d z_i + a_id y.
    \end{equation*}
    Moreover, we can take $a_1 = x$.
\end{proposition}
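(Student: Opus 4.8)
The plan is to start from the local normal form of Proposition \ref{lambdacoord} and massage it by two explicit changes of coordinates: first a shift of the transverse (leaf) coordinates to annihilate the $dx$-terms, and then a straightening of $a_1$ into the new $x$-coordinate. Since $\dim M = 2+q$ forces $n=1$, Proposition \ref{lambdacoord} supplies coordinates $(x,y,z_1,\dots,z_q)$ with $\partial_{z_i} = R_i$ (so that ``leaf-wise constant'' means independent of $z_1,\dots,z_q$) and, suppressing the single index $j=1$,
\[
    \lambda_i = dz_i - f^i(x,y)\,dx + g^i(x,y)\,dy .
\]

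First I would eliminate the $dx$-terms. I seek new transverse coordinates $w_i := z_i + h_i(x,y)$; since these leave $x,y$ untouched, the Reeb directions persist as $R_i = \partial_{z_i} = \partial_{w_i}$, so ``leaf-wise constant'' keeps its meaning. A direct computation gives $\lambda_i = dw_i + (g^i - \partial_y h_i)\,dy$ exactly when $\partial_x h_i = -f^i$, and this equation is solved by integrating in $x$, for instance $h_i(x,y) := -\int_0^x f^i(s,y)\,ds$, a genuine function of $(x,y)$ alone. Renaming $w_i$ back to $z_i$, I obtain $\lambda_i = dz_i + a_i\,dy$ with $a_i := g^i - \partial_y h_i$ depending only on $(x,y)$, hence constant along the leaves.

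Next I would read off the non-degeneracy directly in these coordinates. Here $d\lambda_i = da_i\wedge dy = \partial_x a_i\,dx\wedge dy$, and $\xi = \bigcap_i \ker\lambda_i$ is spanned by $\partial_x$ and $\partial_y - \sum_k a_k\,\partial_{z_k}$. Evaluating on this basis yields $d\lambda_i\rvert_\xi = \partial_x a_i$, so condition (ii) of Definition \ref{qcontactstructure} — the non-degeneracy of $d\lambda_i\rvert_\xi$ on the $2$-dimensional $\xi$ — is precisely the statement that $\partial_x a_i$ is non-vanishing on $U$.

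Finally, to arrange $a_1 = x$, I would use that $\partial_x a_1 \neq 0$: by the inverse function theorem, $\tilde x := a_1(x,y)$ together with the unchanged $y, z_1,\dots,z_q$ is a legitimate chart near $p$ (its Jacobian has determinant $\partial_x a_1$). In this chart $\lambda_1 = dz_1 + \tilde x\,dy$; solving $x = x(\tilde x, y)$ turns each $a_i$ into $\tilde a_i(\tilde x, y) := a_i(x(\tilde x,y),y)$, still independent of the $z$'s, so $\lambda_i = dz_i + \tilde a_i\,dy$, and the chain rule gives $\partial_{\tilde x}\tilde a_i = \partial_x a_i / \partial_x a_1 \neq 0$. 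Renaming $\tilde x \to x$ finishes the proof. The only genuinely delicate points are the bookkeeping ones: verifying that each coordinate change preserves the Reeb directions (so that leaf-wise constancy is not spoiled) and that non-degeneracy is faithfully captured by $\partial_x a_i \neq 0$ rather than by some expression mixing the other forms. Neither is a real obstacle, so the main content of the argument is simply gluing these two explicit substitutions onto the normal form of Proposition \ref{lambdacoord}.
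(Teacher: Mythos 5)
Your proof is correct, but it takes a genuinely different route from the paper's. The paper does not invoke Proposition \ref{lambdacoord} at all: it uses holonomy invariance to push each $d\lambda_i$ down to a symplectic form $\omega_i$ on a local transversal $D^2$, applies Darboux's theorem there to write $\omega_1 = dx'\wedge dy'$ and $\omega_i = b_i\, dx'\wedge dy'$ with $b_i$ non-vanishing, lifts the coordinate fields $\partial_{x'},\partial_{y'}$ to $M$, corrects the lift $Y$ by subtracting $\sum_i (c_i\circ\Phi)R_i$ (where $c_i$ is an $x'$-primitive of $b_i$) so as to obtain a commuting frame $\{X,\widetilde Y, R_1,\dots,R_q\}$, takes that frame as coordinate fields, and finally identifies $\lambda_i = dz_i + a_i\,dy$ with $a_i = -(c_i\circ\Phi)$ via the Poincar\'e lemma. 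Your argument instead piggybacks on the already-established normal form of Proposition \ref{lambdacoord} and reduces everything to two explicit substitutions: the shift $w_i = z_i + h_i(x,y)$, which is solvable precisely because in codimension $2$ there is a single $x$-variable (for $n>1$ the analogous system $\partial_{x_j}h_i = -f^i_j$ carries an integrability obstruction $\partial_{x_k}f^i_j = \partial_{x_j}f^i_k$), and the straightening $\tilde x = a_1$ via the inverse function theorem. What your approach buys is elementarity -- no leaf space, no Darboux theorem, no bracket corrections -- together with a clean localisation of where the hypotheses enter: $\partial_x a_i \neq 0$ is read off as exactly condition (ii) on the $2$-dimensional bundle $\xi$, and it also delivers $a_1 = x$ on the nose (the paper's construction literally gives $a_1 = -x$, requiring a harmless sign adjustment). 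What the paper's approach buys is the transverse-geometric interpretation behind the remark following its proof: $a_i$ is (minus) a primitive of the density $b_i$ of $\omega_i$ relative to the Darboux area form of $\omega_1$, so the failure in higher codimension is identified as the impossibility of writing all the transverse symplectic forms canonically in a single basis.
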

\begin{proof}
    Around each point of $M$, one can find a neighbourhood $U$ diffeomorphic to a product of discs $D^q \times D^2$. Let $\Phi: U \to D^2$ be the submersion defined by this diffeomorphism, the fibres of which are the plaques of $\mathcal{F}$ on $U$. Since the symplectic form $d\lambda_i$ is holonomy invariant, there is a well-defined symplectic form $\omega_i$ on $D^2$ satisfying $\Phi^\ast\omega_i = d\lambda_i$. By Darboux's theorem, there is a coordinate chart $(x',y')$ on $D^2$ such that $\omega_1 = d x'\wedge d y'$. Moreover, for every other $i$ there is a non-vanishing function $b_i$ on $D^2$ such that $\omega_i = b_id x'\wedge d y'$. 
    \par Consider on $U$ the unique fields $X$ and $Y$ on $M$ that are $\Phi$-related to the coordinate vector fields $\partial_{x'}$ and $\partial_{y'}$ and have no components in any leaf-wise direction. More precisely, they satisfy:
    \begin{equation*}
    \begin{cases}
        X = \Phi^\ast \partial_{x'}\\
        Y = \Phi^\ast \partial_{y'}\\   
        \mathcal{L} _{R_i}X = \mathcal{L} _{R_i}Y = 0 \text{   for all } i.
    \end{cases}
    \end{equation*}
    Thus, $\{R_1, \cdots, R_q, X, Y\}$ is a local frame for the tangent bundle of $M$, satisfying $[R_i, R_j] = [X, R_j] = [Y, R_j] = 0$ for any $i,j$. As for $[X,Y],$ we have
    \begin{equation*}
        \lambda_i([X,Y]) =  X\lambda_i(Y) - Y\lambda_i(X) - d\lambda_i(X,Y) = b_id x'\wedge d y'(\partial_{x'},\partial_{y'}) = b_i \circ \Phi,
    \end{equation*}
    so that $[X,Y] = \sum_i(b_i \circ \Phi)R_i$. 
    We define on $D^2$ functions $c_i(x',y') = \int_0^{x'}b_i(t,y')d t$ and note that, since $X$ and $\partial_{x'}$ are $\Phi$-related, these satisfy 
    \begin{equation*}
        X(c_i\circ\Phi) = \partial_{x'}c_i \circ \Phi = b_i\circ\Phi.
    \end{equation*}
    Thus, the vector field $\widetilde Y :=  Y - \sum_i(c_i \circ \Phi)R_i$ is linearly independent from $X$ and the $R_i$; in addition, it is such that 
    \begin{equation*}
        [X,\widetilde Y] = [X,Y] - \sum_i[X,(c_i \circ \Phi)R_i] = \sum_i(b_i \circ \Phi)R_i -\sum_i(b_i \circ \Phi)R_i = 0.
    \end{equation*}
    One can, therefore, find coordinates $(x,y,z_1, \cdots, z_q)$ on $M$ whose coordinate vector fields are $\{X, \widetilde{Y}, R_1, \cdots, R_q\}$, obtaining
    \begin{equation*}
    \begin{cases}
        X = \partial_{x}\\
        Y = \partial_{y} + \sum_i(c_i\circ\Phi)\partial_{z_i} \\ 
        R_i = \partial_{z_i}.
    \end{cases}
    \end{equation*}
    It follows from construction that $\Phi^\ast d x' = d x$ and $\Phi^\ast d y' = d y$. Hence 
    \begin{equation*}
        d\lambda_i = \Phi^\ast\omega_i = (b_i\circ \Phi)d x \wedge d y
    \end{equation*}
    and, since $d (c_i\circ\Phi) = (b_i\circ\Phi)d x + \partial_y(c_i\circ\Phi)d y$,  
    \begin{equation*}
        d\left((c_i\circ\Phi)d y\right) =  (b_i\circ\Phi)d x \wedge d y.
    \end{equation*}
    Thus $d(\lambda_i - (c_i\circ\Phi)d y) = 0$, and there exists, on a possible smaller neighbourhood, a function $f_i$ such that 
    \begin{equation*}
        \lambda_i = d f_i - (c_i\circ\Phi)d y.
    \end{equation*}
    
    \noindent Finally, we see that
    \begin{equation*}
    \begin{split}
        \delta_{ij} &= \lambda_i(R_j) = df_i(\partial_{z_j}) - (c_i\circ\Phi)d y(\partial_{z_j}) = \partial_{z_j}f_i; \\
        0 &= \lambda_i(X) = d f_i(\partial_x) - (c_i\circ\Phi)d y(\partial_x) = \partial_xf_i; \\
        0 &= \lambda_i(Y) = d f_i(\partial_y) + \sum_j(c_j\circ\Phi)d f_i(\partial_{z_j}) - (c_i\circ\Phi)d y(\partial_y) - \sum_j(c_j\circ\Phi)d y(\partial_{z_j}) \\ 
        &= \partial_yf_i + (c_i\circ\Phi)-(c_i\circ\Phi) = \partial_yf_i,
    \end{split}
    \end{equation*}
    and therefore $d f_i = d z_i$, from where we conclude, setting $a_i :=  -(c_i\circ\Phi)$, that $\lambda_i = d z_i +a_id y$, as we wanted.
 
\end{proof}
    
A crucial fact in the construction above is the existence of the non-vanishing functions $b_i$. When the codimension is greater than $2$, one can not generally find a single basis in which all the symplectic forms are written canonically, so this argument does not hold. When, however, all the derivatives $d\lambda_i$ coincide, then a similar construction in general codimension yields:
\begin{proposition}[\cite{blair_darboux_2006}]\label{localRepresentationUniformCase}
    If $M$ is an uniform $q$-contact manifold, then around each point there are coordinates $(U; x,y,z_1, \cdots, z_q)$ such that 
    \begin{equation*}
        \lambda_i = d z_i + \sum_j x_jd y_j.
    \end{equation*}
\end{proposition}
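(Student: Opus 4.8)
The plan is to exploit uniformity to reduce everything to a \emph{single} symplectic form on the local leaf space, and then to use the shear freedom in the leaf-wise coordinates to normalise the primitives. Since the coframe is uniform, all the derivatives coincide: set $\omega := d\lambda_1 = \cdots = d\lambda_q$. This is a closed $2$-form of constant rank $2n$ with $\ker\omega = \mathcal{R}$. Because each Reeb field $R_i$ lies in $\mathcal{R} = \ker\omega$ we have $\iota_{R_i}\omega = 0$, and as $\omega$ is closed, Cartan's formula gives $\mathcal{L}_{R_i}\omega = d\iota_{R_i}\omega = 0$. Thus $\omega$ is basic with respect to the contact foliation $\mathcal{F}$, and I would use this to push $\omega$ down to the local leaf space.

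For the coordinates, I would work in a foliated chart and let $\Phi \colon U \to D^{2n}$ be the local submersion onto the leaf space, whose fibres are the plaques of $\mathcal{F}$. Since $\omega$ is basic with kernel exactly the fibre directions, it descends to a closed, non-degenerate (hence symplectic) form $\bar\omega$ on $D^{2n}$. I would apply the classical Darboux theorem to obtain coordinates $(x,y)$ on $D^{2n}$ with $\bar\omega = \sum_j dx_j\wedge dy_j$, and pull these back to transverse coordinates on $U$, still denoted $x,y$. To fix the leaf directions, I would pick a transversal $\Sigma$ and build $z$-coordinates by flowing along the commuting Reeb fields, via $(x,y,z)\mapsto (\phi_1^{z_1}\circ\cdots\circ\phi_q^{z_q})(\sigma(x,y))$ with $\sigma$ a section of $\Phi$ over $\Sigma$. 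Commutativity of the $R_i$ (Proposition \ref{reebfields}) guarantees $\partial_{z_i} = R_i$, while the $R_i$ being tangent to the leaves guarantees the transverse coordinates $x,y$ are unchanged by the flows. The upshot is a chart $(x,y,z)$ in which $\partial_{z_i}=R_i$ and $d\lambda_i = \omega = \sum_j dx_j\wedge dy_j$ for every $i$.

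In these coordinates the form $\mu_i := \lambda_i - \sum_j x_j\,dy_j$ satisfies $d\mu_i = \omega - \sum_j dx_j\wedge dy_j = 0$, so on a possibly smaller ball $\mu_i = df_i$ for some function $f_i$. Evaluating $\lambda_i = df_i + \sum_j x_j\,dy_j$ on $R_k = \partial_{z_k}$ and using $\lambda_i(R_k)=\delta_{ik}$ from Proposition \ref{reebfields} yields $\partial_{z_k} f_i = \delta_{ik}$, whence $f_i = z_i + h_i(x,y)$ with $h_i$ depending only on the transverse coordinates. Finally I would perform the shear $z_i \mapsto z_i + h_i(x,y)$, which is a diffeomorphism preserving both the $x,y$ coordinates and the identities $\partial_{z_i}=R_i$ and which absorbs $h_i$, leaving exactly $\lambda_i = dz_i + \sum_j x_j\,dy_j$.

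The only delicate point is the simultaneous, or \emph{basic}, Darboux step in the second paragraph: one must produce Darboux coordinates for the descended form $\bar\omega$ and Reeb-flow coordinates for the leaves that are mutually compatible. This is precisely where uniformity is essential, since it is what makes all the $d\lambda_i$ descend to a single symplectic form on $D^{2n}$, so that one Darboux chart serves every $i$ at once — contrast the codimension-$2$ argument of Proposition \ref{localRepresentationCodimension2}, where the distinct $\omega_i$ force the auxiliary functions $b_i, c_i$. Given that $\omega$ is basic and the $R_i$ commute, both families of coordinates exist and are compatible, and the remaining steps are routine.
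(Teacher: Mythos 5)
Your proof is correct. The paper gives no argument of its own for this proposition --- it cites \cite{blair_darboux_2006} and remarks only that ``a similar construction in general codimension'' to that of Proposition \ref{localRepresentationCodimension2} applies --- and your argument is precisely that construction, streamlined by uniformity: the single form $\omega = d\lambda_i$ is basic and descends to a symplectic form on the local leaf space, one Darboux chart there serves all $i$ at once, the commuting Reeb flows give compatible leaf-wise coordinates with $\partial_{z_i} = R_i$, and the closed forms $\lambda_i - \sum_j x_j\,dy_j$ are integrated and normalised by a shear, exactly as the paper's codimension-$2$ proof does with its auxiliary functions $c_i$ collapsed away.
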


\begin{lemma}[Reparameterisation Lemma]\label{replem} 
    Let $\vec{\lambda} = (\lambda_1, \cdots, \lambda_q)$ be an adapted coframe for a $q$-contact structure on $M$, and $A(p) = \{a_{ij}(p)\}$ a mapping $A: M \to \mathcal{M}(q; \mathbb{R})$ from $M$ to the space of real matrices. If $A$ is sufficient $C^1$-close to $0$, and the functions $a_{ij}$ are leaf-wise constants (with respect to the contact foliation), then 
    \begin{equation*}
        \vec{\eta} := (\mathrm{id} - A)\vec{\lambda}
    \end{equation*}
    is also an adapted coframe for a $q$-contact structure on $M$, and the splittings associated with these two structures are the same.
\end{lemma}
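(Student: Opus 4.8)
The plan is to verify the three conditions of Definition~\ref{qcontactstructure} for $\vec\eta$ while keeping the original splitting $\mathcal{R}\oplus\xi$ fixed, and I expect essentially all the content to sit in condition~(iii). Writing $\eta_i=\sum_j(\delta_{ij}-a_{ij})\lambda_j$, the first move is purely algebraic: since $A$ is $C^1$-close to $0$, the endomorphism $\mathrm{id}-A$ is invertible at every point, so the $\eta_i$ are nowhere vanishing and pointwise span the same annihilator subbundle as the $\lambda_i$. Hence $\bigcap_i\ker\eta_i=\bigcap_i\ker\lambda_i=\xi$, which gives condition~(i) and already shows the $q$-contact distribution is unchanged. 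I would then differentiate, obtaining
\[
 d\eta_i=\sum_j(\delta_{ij}-a_{ij})\,d\lambda_j-\sum_j da_{ij}\wedge\lambda_j ,
\]
and organise the rest of the verification around this single formula.

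The crux is condition~(iii), namely $\ker d\eta_i=\mathcal{R}$ for every $i$ (in particular, a kernel independent of $i$). I would first aim to show $\mathcal{R}\subseteq\ker d\eta_i$ by contracting the displayed formula with a Reeb field. Using $\iota_{R_k}d\lambda_j=0$ and $\lambda_j(R_k)=\delta_{jk}$ from Proposition~\ref{reebfields}, the contraction collapses to
\[
 \iota_{R_k}d\eta_i = da_{ik}-\sum_j da_{ij}(R_k)\,\lambda_j .
\]
This is exactly where leaf-wise constancy is meant to do its work: $da_{ij}(R_k)=R_k(a_{ij})=0$ annihilates the $\lambda_j$-terms, so the whole question of whether $R_k$ lies in $\ker d\eta_i$ reduces to the residual one-form $da_{ik}$. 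This step is the spine of the argument and the one I would try to make airtight.

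For condition~(ii) I would restrict $d\eta_i$ to $\xi$: the wedge terms vanish because $\lambda_j|_\xi=0$, leaving $d\eta_i|_\xi=\sum_j(\delta_{ij}-a_{ij})\,d\lambda_j|_\xi$, a near-identity combination of the non-degenerate forms $d\lambda_j|_\xi$; since non-degeneracy (that is, $(\,\cdot\,)^n\neq0$) is an open condition, the $C^0$-smallness of $A$ secures it. Assembling the pieces: once $\mathcal{R}\subseteq\ker d\eta_i$ is known, $d\eta_i$ descends to $\xi\cong TM/\mathcal{R}$, its rank is at most $2n$, and non-degeneracy on $\xi$ forces the rank to be exactly $2n$, whence $\ker d\eta_i=\mathcal{R}$ for all $i$ and the splitting is genuinely unchanged. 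The main obstacle is precisely the residual term identified above: leaf-wise constancy only kills the $\mathcal{R}$-directional derivatives $da_{ij}(R_k)$ and leaves $\iota_{R_k}d\eta_i=da_{ik}$, which vanishes identically only if $da_{ik}$ also annihilates $\xi$. Controlling this residual, so that $\mathcal{R}$ really lies in every $\ker d\eta_i$, is the delicate point on which the statement turns; it is automatic whenever leaf-wise constant functions are forced to be locally constant (for instance when the leaves are dense), and in general is the place where the precise strength of the hypotheses on $A$ must be exploited.
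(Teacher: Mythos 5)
Your proposal follows the paper's proof line for line --- the same expansion of $d\eta_i$, the same contraction against the Reeb fields, the same openness argument for non-degeneracy on $\xi$, and the same rank argument upgrading $\mathcal{R}\subseteq\ker d\eta_i$ to equality --- so the whole comparison rests on the one step you leave open. Your computation there is correct: with the $a_{ij}$ merely leaf-wise constant,
\begin{equation*}
\iota_{R_k}d\eta_i \;=\; da_{ik}-\sum_j da_{ij}(R_k)\,\lambda_j \;=\; da_{ik},
\end{equation*}
and this residual $1$-form vanishes on $\mathcal{R}$ but need not vanish on $\xi$. At exactly this point the paper asserts that $da_{ij}(R_l)=0$ forces $\iota_{R_l}d\eta_i=0$; that assertion drops the term $-\lambda_j(R_l)\,da_{ij}=-\delta_{jl}\,da_{ij}$ produced by contracting $da_{ij}\wedge\lambda_j$, i.e.\ it drops precisely your residual. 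So what you flagged as ``the delicate point on which the statement turns'' is not an idea you failed to find: it is a genuine gap in the paper's own proof, and your bookkeeping is the more careful of the two.

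Moreover, the gap is fatal to the lemma as stated, not merely to its proof. Already for $q=1$: on $\mathbb{T}^3$ with $\lambda=\cos\theta\,dx+\sin\theta\,dy$ and Reeb field $R=\cos\theta\,\partial_x+\sin\theta\,\partial_y$, the function $a=\epsilon\cos\theta$ is leaf-wise constant but non-constant, and $\eta=(1-a)\lambda$ is a contact form for small $\epsilon$, yet $\iota_R\,d\eta=da\neq0$, so $\ker d\eta\neq\mathcal{R}$ and the splitting is not preserved. In general one has $\mathcal{R}\subseteq\ker d\eta_i$ for all $i$ if and only if every $da_{ij}$ vanishes identically, i.e.\ if and only if the $a_{ij}$ are locally constant; under that stronger hypothesis both your argument and the paper's close immediately. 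This stronger hypothesis is also the only case the paper ever uses: in Lemma \ref{densereplem} and Proposition \ref{contredux} the foliation is transitive, leaf-wise constant functions are genuine constants, and $A$ is a constant matrix in $\mathrm{Gl}_q(\mathbb{R})$. So the correct repair is to strengthen ``leaf-wise constant'' to ``locally constant'' (equivalently $da_{ij}\equiv0$), not to search for a cleverer way to kill the residual $da_{ik}$.
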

    
\begin{proof}
    We have 
    \begin{equation*}
        \eta_i = \lambda_i - \sum_ja_{ij}\lambda_j,
    \end{equation*}
    and therefore $\xi = \cap_j\ker\lambda_j \subset \ker\eta_i$, for every $i$. On the other hand, by choosing $A$ small enough so that $(\mathrm{id}-A)$ is invertible, we can write each $\lambda_i$ as linear combination of the $\eta_j $, thus obtaining that $\cap_j\ker\eta_j \subset \ker\lambda_i$, hence
    \begin{equation*}
        \cap_j\ker\eta_j = \cap_j\ker\lambda_j = \xi.
    \end{equation*} 
    Moreover, 
    \begin{equation}\label{diffeta}
         d \eta_i =  d \lambda_i -\sum_j( d  a_{ij}\wedge\lambda_j + a_{ij} d \lambda_j),
    \end{equation}
    and, since by hypothesis the functions $a_{ij}(R_l) = 0$ for every $i,j,l$, the Reeb vector fields $R_i$ satisfy $\iota_{R_i} d \eta_j = 0$. 
    In other words, $\mathcal{R} \subset \ker d \eta_i$, for every $i$. 
    The splitting $\mathcal{R} \oplus \xi = T M$ and the non-degeneracy of $d\eta_i$ on $\xi$ imply $\ker d \eta_i \subset \mathcal{R}$, and we have equality between these two bundles as well. 
    \par Using Equation \ref{diffeta}, it is easy to see that 
    \begin{equation*}
         d \eta_i^n =  d \lambda_i^n + \epsilon_i,
    \end{equation*}
    where $\epsilon_i$ can get arbitrarily small if $A$ is taken small enough. In particular, for $\epsilon_i$ sufficiently close to $0$, $d\eta_i^n$ is volume form on the bundle $\xi$. Moreover, by employing the Determinant Theorem, we can derive the equality
    \begin{equation*}
        \eta_1\wedge\cdots\wedge\eta_q\wedge( d \eta_i)^n = \det(\mathrm{id}-A)\lambda_1\wedge\cdots\lambda_q\wedge(( d \lambda_i)^n + \epsilon_i),
    \end{equation*}
    where the RHS will be a volume form as long as $\epsilon_i$ is sufficiently small.
    
\end{proof}
    
In general, the Reeb vector fields of the $\eta_i$ are not the same as the $R_i$ (though they spam the same invariant bundle), so the construction above might be thought of as a \textit{reparameterisation of the contact action}.

When the action is transitive, the situation is considerably better, as we do not need the smallness conditions and actually have a dense subset of possible reparameterisations from which to choose. This happens because the existence of a dense leaf implies that any leaf-wise constant function is just a constant function, so in the transitive case, the only reparameterisations are honest matrices in $\mathrm{Gl}_q( \mathbb{R})$. One can then show that a matrix only fails to take adapted coframes into adapted coframes if its entries are a zero of certain polynomials, that is, the set of such matrices form a Zariski closed subset of $\mathbb{R}^{q\times q}$. 
    
\begin{lemma}\label{densereplem}\cite{almeida_contact_2018}
    Suppose the contact foliation $\mathcal{F}$ is transitive, and let $\mathcal{B} \subset \mathrm{Gl}_q(\mathbb{R})$ be the set of (constant) reparameterisations of the action. The complement of $\mathcal{B}$ is closed and has empty interior. In particular, $\mathcal{B}$ is dense in $\mathrm{Gl}_q(\mathbb{R}).$
\end{lemma}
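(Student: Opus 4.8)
The plan is to reduce the statement to the fact that a single nonzero real polynomial on the space $\mathbb{R}^{q\times q}$ of matrices has a zero locus that is closed with empty interior. First I would record what it means for a constant matrix $B = (b_{ij})$ to be a reparameterisation. Setting $\vec{\eta} := B\vec{\lambda}$, so that $\eta_i = \sum_j b_{ij}\lambda_j$, the computation in Lemma \ref{replem} shows that $\cap_i\ker\eta_i = \xi$ exactly when $\det B \neq 0$, while $\mathcal{R}\subset\ker d\eta_i$ holds automatically because each $d\lambda_j$ annihilates $\mathcal{R}$; combined with the splitting $TM=\mathcal{R}\oplus\xi$, the equality $\ker d\eta_i = \mathcal{R}$ then holds precisely when the restriction $d\eta_i\rvert_\xi = \sum_j b_{ij}\,d\lambda_j\rvert_\xi$ is nondegenerate. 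Since nondegeneracy of a $2$-form on the rank-$2n$ bundle $\xi$ is equivalent to its $n$-th exterior power being nonzero, and since wedging with $\lambda$ detects non-vanishing on $\xi$, the matrix $B$ lies in $\mathcal{B}$ if and only if $\det B \neq 0$ and, for every $i$ and every $p\in M$,
\[
    \lambda\wedge\Bigl(\sum_j b_{ij}\,d\lambda_j\Bigr)^{\!n}\neq 0\quad\text{at }p.
\]

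The key step, and the only place transitivity is used, is to show that these $q$ pointwise conditions are controlled by honest polynomials rather than by $p$-dependent functions. Taking the volume form $dM_1 = \lambda\wedge d\lambda_1^n$ as a reference, I would define for each $i$ a function $P_i^B\colon M\to\mathbb{R}$ by $\lambda\wedge(\sum_j b_{ij}\,d\lambda_j)^n = P_i^B\,dM_1$. Because every Reeb field satisfies $\mathcal{L}_{R_k}\lambda = 0$ and $\mathcal{L}_{R_k}d\lambda_j = 0$, both the numerator and the denominator are $R_k$-invariant top-degree forms; the ratio of two $R_k$-invariant volume forms is itself $R_k$-invariant, so $P_i^B$ is leaf-wise constant. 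As $\mathcal{F}$ is transitive it admits a dense leaf, and a continuous leaf-wise constant function that is constant on a dense leaf is constant on all of $M$. Hence $P_i^B$ equals a single real number $c_i(b_{i1},\dots,b_{iq})$, and expanding the $n$-th power exhibits $c_i$ as a homogeneous polynomial of degree $n$ in the entries of the $i$-th row of $B$.

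With this in hand the conclusion is formal. I would set
\[
    Q(B) := \det(B)\cdot\prod_{i=1}^q c_i(b_{i1},\dots,b_{iq}),
\]
a polynomial in the $q^2$ entries of $B$, and note that by the reduction above $\mathcal{B} = \{B : Q(B)\neq 0\}$, so the complement of $\mathcal{B}$ is exactly the zero locus $Z(Q)$. To see that $Q$ is not the zero polynomial, evaluate at $B = \mathrm{id}$: there $\det B = 1$, and the $i$-th row is the standard basis vector $e_i$, for which $c_i(e_i)\,dM_1 = \lambda\wedge d\lambda_i^n = dM_i$ is a volume form, so $c_i(e_i)\neq 0$ and thus $Q(\mathrm{id})\neq 0$. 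A nonzero real polynomial cannot vanish on any open set, whence $Z(Q)$ is closed with empty interior; intersecting with $\mathrm{Gl}_q(\mathbb{R})$ preserves both properties, and density of $\mathcal{B}$ in $\mathrm{Gl}_q(\mathbb{R})$ follows at once.

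I expect the main obstacle to be the second paragraph: turning the family of pointwise nondegeneracy conditions into global polynomial conditions. The delicate point is verifying that each $P_i^B$ is genuinely leaf-wise constant (which rests on the Reeb invariance of $\lambda$ and the $d\lambda_j$ and on the invariance of the ratio of two invariant volume forms) and then upgrading \emph{leaf-wise constant} to \emph{constant} via transitivity; after that everything is routine. A secondary point worth stating carefully is that the conditions extracted from Lemma \ref{replem} are precisely invertibility of $B$ together with nondegeneracy of each $d\eta_i\rvert_\xi$, with no further hidden constraints, so that $\mathcal{B}$ is \emph{exactly} the non-vanishing set of $Q$.
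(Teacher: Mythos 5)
Your proof is correct and follows essentially the route the paper intends: the paper itself defers this lemma to Almeida's thesis and only sketches the argument in the surrounding text (transitivity forces leaf-wise constant functions to be constants, so a matrix fails to be a reparameterisation exactly when its entries lie in the zero locus of certain polynomials, a proper Zariski-closed subset of $\mathbb{R}^{q\times q}$). Your proposal fleshes out precisely this sketch, constructing the polynomial $Q(B)=\det B\cdot\prod_i c_i$ explicitly, using Reeb-invariance of $\lambda$ and the $d\lambda_j$ plus the dense leaf to make the $c_i$ well-defined constants, and verifying nontriviality of $Q$ at $B=\mathrm{id}$.
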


\subsection{Generalised Weinstein Conjectures}
    
We wish to investigate whether or not some Weinstein-like conjecture holds for the more general objects of Definition \ref{qcontactstructure}. With this in mind, we propose two generalisations for the Weinstein conjecture.
    
\begin{conjecture}[The Weak Generalised Weinstein Conjecture ($\mathrm{WGWC}$)] 
    A contact foliation on a closed manifold $M$ cannot be a foliation by planes.
\end{conjecture}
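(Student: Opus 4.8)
The plan is to argue by contradiction: assume $\mathcal{F}$ is a contact foliation by planes on a closed manifold $M$, so that the $\mathbb{R}^q$-action is free and every leaf is diffeomorphic to $\mathbb{R}^q$ with no compact factor, and then manufacture a closed orbit of one of the Reeb fields, say $R_1$ — that is, a leaf carrying an $S^1$-factor — which is the desired contradiction. The natural engine is the characteristic foliation $\mathcal{F}_1$ attached to $\lambda_1$: it is the orbit foliation of an $\mathbb{R}^{q-1}$-subaction whose transversals inherit from $\lambda_1$ a genuine contact structure, and on each such transversal the trace of the $R_1$-flow is exactly the corresponding Reeb flow. If one of these contact transversals could be chosen \emph{closed}, the Weinstein conjecture on it would supply a closed $R_1$-orbit and we would be finished. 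The whole difficulty is that these transversals are a priori open.

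To produce a closed contact transversal I would first pass to a minimal model. Since $M$ is closed, the closure of any leaf contains a minimal set $\mathcal{M}$ for $\mathcal{F}$; whenever $\mathcal{M}$ is a submanifold of even codimension, Lemma \ref{restevencod} restricts the $q$-contact structure to $\mathcal{M}$, so I may replace $M$ by the closed, minimal $q$-contact manifold $\mathcal{M}$ and assume $\mathcal{F}$ itself is minimal. The algebraic reduction of Proposition \ref{contredux} then applies: $M$ fibres as a principal $\mathbb{T}^k$-bundle over a closed manifold $M_0$ carrying a contact action by planes, where the generic leaf is dense and (by Lemma \ref{densereplem}) the only admissible reparameterisations are the Zariski-dense constant ones. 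On $M_0$ one has much tighter control, and the hope is to cut out a \emph{compact} contact transversal of the characteristic foliation as a level set of a closed defining form — in the spirit of the Tischler fibration the paper develops for its $\mathrm{SGWC}$ equivalences — thereby closing the loop and importing a closed Reeb orbit back up the tower.

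The hard part, and the reason the statement is posed as a conjecture rather than a theorem, is twofold. First, without an auxiliary transverse Riemannian metric — precisely the isometric regime that Theorem \ref{riemanniancase} settles through Myers--Steenrod and Kobayashi — there is no guarantee that leaf closures or minimal sets are submanifolds at all, let alone of even codimension, so the restriction step genuinely breaks down for wild minimal sets; controlling the transverse geometry of $\mathcal{M}$ is the principal obstacle. Second, even granting a closed minimal model, the reduced action on $M_0$ is again by planes, so the reduction alone does not break the impasse: one still needs an independent existence mechanism for a closed Reeb orbit on the now-compact contact transversal. Supplying that mechanism in the foliated setting — either by transporting holomorphic-curve or Seiberg--Witten technology to the symplectic bundles $(\xi, d\lambda_1)$, or by exploiting the tautness of $\mathcal{F}$ established in Subsection \ref{taut} to extract a minimising closed orbit — is where a complete proof must do genuinely new work. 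I expect the two cases the paper does resolve, the Riemannian one (where minimal sets are tame) and the Anosov one (where hyperbolicity forces periodicity), to bracket the difficulty from the two extremes, with the general case hinging on a soft forcing principle that is not yet available in between.
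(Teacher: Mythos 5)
You should be clear at the outset about the status of this statement: it is one of the paper's two \emph{conjectures}, and the paper offers no proof of it. What the paper proves are partial results in special regimes -- uniform structures (Theorem \ref{equivconjuniform}, via Tischler-type closed transversals), overtwisted structures, Anosov actions, and the isometric/Riemannian case (Theorems \ref{noplanefol} and \ref{closedorbitsriemanniancase}) -- together with structural results (Lemma \ref{restevencod}, Proposition \ref{contredux}, Theorem \ref{equivconjmin}) that organise how a general attack might go. Your proposal does not prove the statement either, and you say so explicitly; that assessment is correct, and your identification of the two main obstacles (wild minimal sets outside the isometric regime, and the absence of an existence mechanism for closed Reeb orbits on a compact transversal) matches the paper's own discussion of why only the special cases are within reach. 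In that sense your sketch is an accurate reassembly of the paper's toolkit rather than a gap you failed to see.

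Two concrete defects inside the sketch are still worth flagging. First, your invocation of Proposition \ref{contredux} is vacuous under your standing hypothesis: if every leaf is a plane, then every isotropy group $\mathrm{Iso}(\mathcal{F}(x))$ is trivial, hence $\ker F = \{0\}$ and the action is \emph{faithful}, so the reduction hypothesis (non-faithfulness) fails and no principal $\mathbb{T}^k$-bundle structure is produced -- the reduced manifold is just $M$ itself. In the paper this machinery runs in the opposite logical direction: Theorem \ref{equivconjmin} is used to show that the $\mathrm{WGWC}$, \emph{if true}, excludes minimal contact foliations, not to prove the $\mathrm{WGWC}$. Second, even granting a closed contact transversal $T_1$ of $\mathcal{F}_1$, your phrase ``the Weinstein conjecture on it would supply a closed $R_1$-orbit'' silently assumes the classical $\mathrm{WC}$ on $T_1$, which is itself unproven in general; this is exactly why the paper's Theorem \ref{equivconjuniform} is stated as an \emph{equivalence} ($\mathrm{WC} \iff \mathrm{WGWC}$ for uniform structures) rather than as an unconditional theorem, and why the unconditional corollaries are restricted to cases where the $\mathrm{WC}$ is known (dimension $3$, overtwisted). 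A minor point in the same step: the trace of $R_1$ on a transversal $N$ is not literally the Reeb flow of $\lambda_1\rvert_N$; by Remark \ref{transversalsinduceReeborbits} the Reeb field of the restriction is the projection $\Psi R_1$, which merely \emph{shares its closed orbits} with $R_1$ -- that is enough for your intended conclusion, but the identification as stated is inaccurate.
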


\begin{conjecture}[The Strong Generalised Weinstein Conjecture ($\mathrm{SGWC}$)]
    Every contact foliation has a closed leaf, that is, a leaf homeomorphic to a torus $\mathbb{T}^q$.
\end{conjecture}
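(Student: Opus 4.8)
The plan is to treat the $\mathrm{SGWC}$ not as a statement admitting a single direct proof -- in full generality it is genuinely open -- but to describe the strategy by which it can be established in the two geometrically extreme regimes foreshadowed in the introduction: the isometric (Riemannian) case and the Anosov case. The unifying principle is to locate a compact invariant set on which the contact foliation becomes tractable, and then to extract a closed $\mathbb{T}^q$-leaf from it.

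First I would dispose of the Anosov case, which is essentially immediate. If the contact foliation is Anosov, then by Barbot's transitivity result \cite{barbot_transitivity_2011} the existence of a closed orbit is already forced by the hyperbolic structure alone, so the $\mathrm{SGWC}$ holds with almost no contact-geometric input beyond identifying the foliation as the orbit foliation of a locally free $\mathbb{R}^q$-action.

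The substantive work is the Riemannian case. By Proposition \ref{Rcontacttransverseproperty} a Riemannian contact foliation is \emph{isometric}, i.e. the orbit foliation of an $\mathbb{R}^q$-action by isometries of some metric. Invoking Myers--Steenrod and Kobayashi, the closure of any leaf is then a compact submanifold of \emph{even} codimension, which is precisely the hypothesis of Lemma \ref{restevencod}: the restriction of $\mathcal{F}$ to such a leaf closure $\overline{L}$ is again a contact foliation. Since $\overline{L}$ is a leaf closure inside a Riemannian foliation, the restricted foliation is \emph{minimal} (every leaf dense). At this point I would feed $\overline{L}$ into the reduction of Proposition \ref{contredux}, which reduces a minimal contact action on a closed manifold to a contact action \textbf{by planes} on a base $M_0$, with $\overline{L}$ realised as a principal torus bundle over $M_0$. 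Now the $\mathrm{WGWC}$ -- itself obtained from this same reduction via Theorem \ref{equivconjmin} -- forbids a contact foliation by planes on a closed base, so $M_0$ must degenerate to a point and the torus fibre must be all of $\overline{L}$. Hence $\overline{L}$ is a single compact torus leaf, which is exactly the $\mathbb{T}^q$ demanded by the $\mathrm{SGWC}$; this is the content of Theorem \ref{riemanniancase}.

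The hard part -- and the reason the statement remains a conjecture -- lies entirely outside these two regimes. Both arguments hinge on controlling the topology of minimal sets: in the Riemannian case leaf closures are honest even-codimensional submanifolds, so that Lemma \ref{restevencod} applies and the reduction machinery of Proposition \ref{contredux} can be run; in the Anosov case hyperbolicity produces closed orbits directly. For a general contact foliation neither mechanism is available, since minimal sets need not be submanifolds, let alone of even codimension, so the restriction of $\mathcal{F}$ to a minimal set may fail to be contact and the reduction cannot be invoked. I expect the main obstacle to be bridging this gap -- perhaps by exploiting the tautness and geodesibility of contact foliations established in Subsection \ref{taut} to constrain the geometry of arbitrary minimal sets -- which is where any attack on the conjecture in its full generality would have to concentrate.
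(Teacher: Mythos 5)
Your framing is correct -- the statement is a genuine conjecture, and the paper only establishes it in the Anosov and isometric regimes -- and your Anosov paragraph matches the paper's argument (modulo the small but necessary step, via Poincar\'e recurrence with the invariant volumes $dM_i$, that the non-wandering set is non-empty so that the Spectral Decomposition applies). The Riemannian half, however, contains a genuine gap, and it is not the route the paper takes. Your key step is that leaf closures of an isometric contact foliation are submanifolds of \emph{even} codimension, so that Lemma \ref{restevencod} applies to $\overline{L}$. This is false in general: already for $q=1$, the Reeb flow of an irrational ellipsoid-type contact form on $S^3$ is isometric, and the generic leaf closure is a $2$-torus of codimension \emph{one}; the restriction of $\lambda$ to it is nowhere a contact structure, so the restricted foliation is not a contact foliation and the reduction of Proposition \ref{contredux} cannot even be set up. (The introduction's informal phrase about leaf closures is loose on this point; the paper's actual proofs never apply Lemma \ref{restevencod} to leaf closures.) What the paper does instead is apply the even-codimension machinery to \emph{zero sets of transverse Killing fields}: Proposition \ref{compactisoR} produces a compatible $\mathbb{T}^q$-action by isometries, Proposition \ref{tangencycomp} produces functions $\phi_{ij}$ whose critical points are exactly the tangency points of the generators $X_i$ with $\mathcal{R}$, and Proposition \ref{caramello} (Caramello's foliated version of Kobayashi) shows each such tangency locus is an even-codimensional saturated closed submanifold, to which Lemma \ref{restevencod} \emph{does} apply. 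Iterating through a nested chain $M_1 \supset M_2 \supset \cdots \supset M_{q-1}$ one finds points where all $q$ generators are tangent to the foliation, and these lie on closed $\mathbb{T}^q$-leaves; this is the proof of Theorem \ref{closedorbitsriemanniancase}.

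There is a second, logical, gap: you invoke the $\mathrm{WGWC}$ as if it were ``obtained from this same reduction via Theorem \ref{equivconjmin}.'' That theorem does not prove the $\mathrm{WGWC}$; it only shows that \emph{if} the $\mathrm{WGWC}$ holds then minimal contact foliations cannot exist. In the isometric setting the $\mathrm{WGWC}$ is proved separately, by Theorem \ref{noplanefol}, using the compatible toric action and the periodicity of its generators -- precisely the ingredient your argument never constructs. Even granting your even-codimension step, applying the $\mathrm{WGWC}$ to the reduced foliation on $M_0$ would further require knowing that the reduced $(q-l)$-contact foliation is itself isometric (or admits a compatible toric action), which neither you nor the paper establishes; and note that Proposition \ref{contredux} always yields $\dim M_0 = 2n + q - l > 0$, so the intended conclusion ``$M_0$ degenerates to a point'' cannot occur -- the correct shape of such an argument would be a contradiction with minimality, not a degeneration. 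The fix, in short, is to replace leaf closures by the tangency loci of the compatible toric action's generators, exactly as in Subsection \ref{comptoract}.
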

    
We remark that being orbits of a locally free action of $\mathbb{R}^q$, every leaf of the foliation $\mathcal{F}$ is homeomorphic to $\mathbb{R}^{q-l} \times \mathbb{T}^l$ for some $0 \leq l \leq q$, which depends on the leaf. The $\mathrm{WGWC}$ states that $l \geq 1$ for some leaf, while the $\mathrm{SGWC}$ asks for the existence of a leaf for which $l = q$. Of course, in the contact case, that is, when $q = 1$, a leaf that is not homeomorphic to $\mathbb{R}$ is automatically a closed curve, homeomorphic to $S^1$. Therefore, both the $\mathrm{WGWC}$ and the $\mathrm{SGWC}$ are equivalent to the Weinstein Conjecture on dimension $1$ and comprise generalisations of this conjecture to higher dimensional contact foliations. They form a hierarchy
\begin{equation*}
    \mathrm{SGWC} \implies \mathrm{WGWC} \implies \mathrm{WC}, 
\end{equation*}
with the converse implications holding when $q = 1$.

\par Recall that a subset of a foliated space is called \emph{invariant} if it is closed and saturated. A foliation is \emph{minimal} if its ambient manifold contains no invariant subset. Equivalently, a foliation is minimal when every leaf is a dense subset in the ambient manifold. Since closed leaves are invariant, it is clear that if the $\mathrm{SGWC}$ holds, then there can be no minimal contact foliations. A priori, there is nothing to prevent the existence of minimal contact foliations satisfying the $\mathrm{WGWC}$. We wish to show that this scenario also can not happen. This is a corollary to the following ``reduction'' procedure, which can be seen as a partial converse to the ``extension'' construction of Example \ref{flatbundleexample}. First, let us recall that the relation 
\begin{equation*}
    \mathcal{F}(x) := ~^{\textstyle \mathbb{R}^q}\!\big/_{\textstyle \mathrm{Iso}(\mathcal{F}(x))},
\end{equation*}
where $\mathrm{Iso}(\mathcal{F}(x)) := \{a \in \mathbb{R}^q; F(a,x) = x\}$ is the isotropy group of the leaf (i.e., \emph{orbit}) $\mathcal{F}(x)$, determines the topological type of a leaf  . The kernel of the action is a lattice in $\mathbb{R}^q$, consisting of all the elements acting as the identity on $M$, and it equals the  intersection of all the isotropy subgroups:
\begin{equation*}
    \bigcap_{x \in M}\mathrm{Iso}(\mathcal{F}(x)) = \mathrm{ker}F := \{a \in \mathbb{R}^q; F(a, \cdot) = \mathrm{id}\} \approx \mathbb{Z}^l,
\end{equation*}
hence every leaf is a cylinder $\mathbb{T}^s\times\mathbb{R}^{q-s}$ for some $s \geq l$. In particular, if the action's kernel is non-trivial, then no leaf is a plane. An action whose kernel is trivial is called \emph{faithful}.

\begin{proposition}[Reduction of a contact action]\label{contredux}
    Let $(M, \vec{\lambda}, \mathcal{R} \oplus \xi)$ be a $q$-contact manifold. Suppose the contact action $F$ is transitive and non-faithful. Then $M$ is a principal $\mathbb{T}^l$-bundle over a closed $(2n+q-l)$-dimensional manifold $M_0$, for a fixed $0 < l <q$; moreover, $F$ induces a faithful $(q-l)$-contact action on $M_0$. 
\end{proposition}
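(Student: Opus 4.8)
The plan is to pin down the isotropy groups first, then build the bundle, and finally descend the contact data, isolating the non-degeneracy of the descended forms as the real difficulty.

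First I would show that transitivity (i.e.\ minimality of $\mathcal F$, so that the only closed saturated sets are $\varnothing$ and $M$) forces every isotropy group to coincide with $\ker F$. For $a\in\mathbb R^q$ the fixed-point set $\mathrm{Fix}(a)=\{x:F(a,x)=x\}$ is closed, and since $\mathbb R^q$ is abelian it is saturated: from $F(a,x)=x$ one gets $F(a,F(b,x))=F(b,F(a,x))=F(b,x)$. Minimality then leaves only $\mathrm{Fix}(a)\in\{\varnothing,M\}$, so each $a$ either lies in $\ker F$ or fixes nothing; hence $\mathrm{Iso}(\mathcal F(x))=\ker F$ for \emph{every} $x$. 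Local freeness makes $\ker F$ discrete, so $\ker F\cong\mathbb Z^l$; non-faithfulness gives $l\ge 1$, while $l=q$ would make every leaf a compact $q$-torus, which by minimality would be a single compact leaf equal to $M$, contradicting $\dim M=2n+q>q$. Thus $0<l<q$.

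Writing $V:=\mathrm{span}_{\mathbb R}(\ker F)\cong\mathbb R^l$, the restricted $V$-action factors through the torus $\mathbb T^l=V/\ker F$ and is free, since the $V$-isotropy at $x$ equals $\mathrm{Iso}(\mathcal F(x))\cap V=\ker F$. A free smooth action of the compact group $\mathbb T^l$ yields a principal bundle $\pi:M\to M_0:=M/\mathbb T^l$ with $M_0$ closed of dimension $2n+q-l$. Fixing a complement $\mathbb R^q=V\oplus W$, the action of $W\cong\mathbb R^{q-l}$ commutes with the $\mathbb T^l$-action and descends to $M_0$; it is faithful, because if $w\in W$ acts trivially on $M_0$ then $F(w,x)$ lies in the $\mathbb T^l$-orbit of $x$, so $w-v\in\mathrm{Iso}(\mathcal F(x))=\ker F\subset V$ for some $v\in V$, whence $w\in V\cap W=0$. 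To furnish the contact data I descend the Reeb fields and suitable $1$-forms: since all Reeb fields commute, the $W$-Reeb fields are $\mathbb T^l$-invariant and project to commuting fields $\bar R_1,\dots,\bar R_{q-l}$, and $\xi$ (preserved by the Reeb flows) projects to a rank-$2n$ distribution $\bar\xi:=\pi_\ast\xi$. For the coframe I seek constant covectors $c_b\in V^\perp\subset(\mathbb R^q)^\ast$ with $\langle c_b,w_a\rangle=\delta_{ab}$ and set $\eta_b:=\sum_k c_{bk}\lambda_k$; the condition $c_b\in V^\perp$ makes $\eta_b$ annihilate the vertical bundle, and $\mathbb T^l$-invariance of the $\lambda_k$ makes $\eta_b$ invariant, so $\eta_b=\pi^\ast\bar\eta_b$. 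One then verifies formally that $\bigcap_b\ker\bar\eta_b=\bar\xi$, that $\bar\eta_b(\bar R_a)=\delta_{ab}$, and that $\ker d\bar\eta_b=\bar{\mathcal R}$, \emph{provided} $d\bar\eta_b|_{\bar\xi}$ is non-degenerate.

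The heart of the matter, and the step I expect to be hardest, is exactly this non-degeneracy. Pulling back, $d\bar\eta_b|_{\bar\xi}$ is non-degenerate iff $\sum_k c_{bk}\,d\lambda_k|_\xi$ is symplectic at every point of $M$, and a linear combination of the symplectic forms $d\lambda_k|_\xi$ need not be symplectic. So I must produce $q-l$ independent \emph{good} covectors in $V^\perp$, where $c$ is good when $\sum_k c_k\,d\lambda_k|_\xi$ is everywhere non-degenerate. The good covectors form an open set whose complement is, pointwise, the zero locus of a Pfaffian polynomial; by the reasoning behind Lemma \ref{densereplem} a single good covector in $V^\perp$ can be spread to a full adapted system inside $V^\perp$. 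The genuine obstacle is therefore to exclude the degenerate alignment in which $V^\perp$ lies entirely in the degeneracy locus — for instance, in the uniform case $\sum_k c_k\,d\lambda_k=(\sum_k c_k)\,d\lambda_1$ degenerates precisely when $c\perp\mathbf 1$, so the bad configuration is $\mathbf 1\in V$. Ruling this out is where minimality must enter decisively: such an alignment would force an invariant closed $2$-form on $M$ descending to a closed form on $M_0$ whose integrality, as part of the Euler class of $\pi$, is incompatible with the minimality of the induced $W$-flow. Establishing this incompatibility — equivalently, that $\mathrm{span}_{\mathbb R}(\ker F)$ cannot be swallowed by the degeneracy locus for a \emph{minimal} action — is the crux on which the whole reduction turns; once it is secured, Lemma \ref{densereplem} completes the coframe and $F$ descends to a faithful $(q-l)$-contact action on $M_0$.
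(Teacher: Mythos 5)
Your construction of the principal bundle and of the descended action is correct and coincides with the paper's own proof: the observation that fixed-point sets $\mathrm{Fix}(a)$ are closed and saturated, so that every isotropy group equals $\ker F$; the free, proper $\mathbb{T}^l$-action and the quotient $M_0$; and the descent and faithfulness of the $W$-action are all done the same way (the paper words the hypothesis as transitivity, but it is exactly the minimality-style argument you give that makes the isotropy groups constant, and this is how Proposition \ref{contredux} is used in Theorem \ref{equivconjmin}). Your descent of the coframe via constant covectors $c_b\in V^{\perp}$, $\eta_b=\sum_k c_{bk}\lambda_k$, is also a legitimate way to produce basic $1$-forms on $M_0$.

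The gap is the step you yourself flag and then leave open: you never prove that $V^{\perp}$ contains even one covector $c$ with $\sum_k c_k\,d\lambda_k$ non-degenerate on $\xi$ at every point of $M$. Everything after ``the genuine obstacle is therefore\dots'' is a restatement of the problem plus a speculative Euler-class/integrality argument from which no contradiction is actually derived; and the two tools you invoke cannot close it. Lemma \ref{densereplem} says the bad covectors form a closed set with empty interior in the whole of $(\mathbb{R}^q)^{\ast}$, which is perfectly compatible with the bad set containing the \emph{proper subspace} $V^{\perp}$ --- your own uniform computation (bad set $=\mathbf{1}^{\perp}$, bad configuration $\mathbf{1}\in V$) exhibits exactly this; and a preliminary reparameterisation $B$ cannot help either, since $B^{T}$ carries $\mathrm{Ann}(BV)$-combinations of the new coframe to $\mathrm{Ann}(V)$-combinations of the old, so it cannot create a good covector where none exists. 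Hence, as written, the proposal does not prove the proposition. For comparison, the paper performs the descent by fibre integration: after reparameterising, it sets $\eta_i:=\rho_{\ast}(\lambda_i\wedge\omega)$ for a normalised fibre-wise volume form $\omega$ and asserts that, because $\rho_{\ast}$ commutes with $d$, each $d\eta_i$ is non-degenerate on $\xi_0$ with kernel $\mathcal{H}_0$. If one unwinds that construction with the natural invariant choice of $\omega$ (the wedge of the invariant duals of the vertical generators), the pullbacks of the paper's $\eta_i$ are precisely constant-coefficient $V$-annihilating combinations of the $\lambda_k$ --- the same forms you wrote down --- so the non-degeneracy the paper asserts in that one sentence is the very condition you isolate as the crux (it is immediate if $\omega$ can be chosen closed, but in general $d\eta_i$ carries the correction term $-\rho_{\ast}(\lambda_i\wedge d\omega)$). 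In short: you have correctly identified, and made explicit, the load-bearing step of this reduction, which the paper compresses into an assertion; but identifying the crux is not closing it, and your proof stops one lemma short of the statement.
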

\begin{proof}
    Let $\Gamma := \ker F \approx \mathbb{Z}^l$, where $0 < l < q$. We consider the vector spaces
    \begin{align*}
        G &:= \mathrm{Span}~\Gamma \approx \mathbb{R}^l, \\
        H &:= ~^{\textstyle \mathbb{R}^q}\!\big/_{\textstyle G} \approx \mathbb{R}^{q-l},
    \end{align*}
    and the natural isomorphism $\mathbb{R}^q \approx G \oplus H.$  
    \par First, let the torus $\mathbb{T}^l \approx \Gamma / G$ act on $M$ by 
    \begin{equation*}
        (a + \Gamma)\cdot x := F(a,x).
    \end{equation*}
    This action is well defined because if $a + \Gamma = b + \Gamma$, their difference belongs to the kernel of $F$. It is also a free action, since $(a + \Gamma)\cdot x = x$ means $F(a, \cdot)$ acts as the identity on the leaf $\mathcal{F}(x)$, hence $a \in \Gamma,$ the isotropy group of $\mathcal{F}(x)$. Finally, the action is proper due to the compactness of  $\mathbb{T}^l$. It follows that the leaf space
    \begin{equation*}
        M_0 := ~^{\textstyle M}\!\big/_{\textstyle \mathbb{T}^l}
    \end{equation*}
    is a $(2n+q-l)$-dimensional closed manifold $M_0$, and $\mathbb{T}^l \hookrightarrow M \xrightarrow[]{\rho} M_0$ is a principal $\mathbb{T}^l$-bundle, where $\rho: M \to M_0$ is the canonical projection.
    \par We define an action of $\mathbb{R}^{q-l}$ on $M_0$, via $H$, by
    \begin{equation}\label{reduxact}
    \begin{split}
        F_0: H \times M_0 &\rightarrow M_0 \\
        (\overline{a}, \rho(x)) &\mapsto \rho(F(a,x)).
    \end{split}
    \end{equation}
    This does not depend on the first representative $a$, since $\rho(F(a,x)) = \rho(F(b,x))$ for every $a, b \in \mathbb{R}^q$. It also does not depend on the second representative $x$. Indeed, if $\rho(x)= \rho(y)$, then $y = F(b, x)$ for some $b \in G$, and consequently $F(a, y) = F(a+b, x)$ belong to the same leaf as $F(a,x)$.
    \par It remains to show that $F_0$ is a contact action. We will achieve this by using integration along the fibres, possibly after choosing a suitable reparameterisation of the action $F$. Since the bundle $\mathcal{R}$ is trivial, the isomorphism $\mathbb{R}^q \approx G \oplus H$ induces a splitting $\mathcal{R} = \mathcal{G} \oplus \mathcal{H}.$ Note that $\mathcal{G}_x$ is composed of the directions tangent to the fibre $\rho^{-1}(\rho(x))$, hence $\mathcal{G}_x = \ker d\rho_x$. Similarly, the tangent space at $\rho(x)$ of the orbit $F_0(H, \rho(x))$ is exactly $\rho_\ast(\mathcal{H}_x)$, so it is sufficient to show that $\mathcal{H}_0 := \rho_\ast\mathcal{H}$ can be realised as the Reeb bundle of a $(q-l)$-contact structure on $M_0$.
    \par We begin by decomposing the Reeb fields of $\vec{\lambda}$ as
    \begin{equation*}
        R_i = R_i^\mathcal{G} \oplus R_i^\mathcal{H}.
    \end{equation*}
    Using Lemma \ref{replem} to find a suitable reparameterisation if necessary (recall $\mathcal{F}$ is a transitive foliation), we may assume without loss of generality that 
    \begin{equation*}
    \begin{split}
        \mathcal{H} &= \mathrm{Span}\{R_1^\mathcal{H}, \cdots, R_{k-l}^\mathcal{H}\}; \\
        \lambda_i (R_i^\mathcal{H}) &\neq 0, \text{for } i= 1, \cdots, k-l.
    \end{split}    
    \end{equation*}
    Let us further replace $R_i^\mathcal{H}$ by a suitable multiple $X_i$ such that $\lambda_i(X_i) \equiv 1$. In addition, we consider a fibre-wise volume form $\omega$, normalised as to satisfy
    \begin{equation*}
        \int\displaylimits_{\rho^{-1}(y)}\!\omega = 1,
    \end{equation*}
    for every $y \in M_0$. We define $k-l$ differential $1$-forms on $M_0$ by $\eta_i := \rho_\ast(\lambda_i \wedge\omega)$, using the linear morphism $\rho_\ast: \wedge^\ast(M) \to \wedge^{\ast-l}(M_0)$, determined by $\rho$ via integration along the fibres. To be more precise, 
    \begin{equation*}
        \eta_i\rvert_y(Z) = \int\displaylimits_{\rho^{-1}(y)}\!\iota_{\widetilde{Z}}(\lambda_i\wedge\omega),
    \end{equation*}
    where the RHS is independent of the choice of lifting $\widetilde{Z}$. The forms $\eta_i$ are non-vanishing since $\eta_i(\rho_\ast X_i) \equiv 1$ for $i = i, \cdots q-l$; it is clear from their construction that they are linearly independent forms whose restriction to $\xi_0:= \rho_\ast\xi$ is identically zero. Finally, the morphism $\rho_\ast$ commutes with the exterior derivative, from where it follows that each $d\eta_i$ is non-degenerate on $\xi_0$, and has as its kernel the bundle $\rho_\ast\mathcal{R}=\rho_\ast\mathcal{H} =: \mathcal{H}_0$. Hence $(M_0, \vec{\eta}, \mathcal{H}_0\oplus \xi_0)$ is a $(q-l)$-contact manifold, as we wanted.
    
\end{proof}
  
\begin{theorem}\label{equivconjmin}
  A \emph{minimal} contact foliation $(M, \mathcal{F})$ is either a minimal foliation by planes, or it reduces to a minimal contact foliation by planes.
\end{theorem}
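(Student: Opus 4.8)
The plan is to first prove a rigidity statement for the isotropy groups of a minimal contact action — namely that each of them coincides with the kernel $\Gamma := \ker F \approx \mathbb{Z}^l$ — so that every leaf has the \emph{same} topological type $\mathbb{T}^l \times \mathbb{R}^{q-l}$, and then to split into cases according to $l = \mathrm{rank}\,\Gamma$, invoking Proposition \ref{contredux} in the intermediate range.

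For the rigidity step I would fix $x \in M$ and $a \in \mathrm{Iso}(\mathcal{F}(x))$ and consider the fixed set $\mathrm{Fix}(a) := \{y \in M : F(a,y) = y\}$. This set is closed, being the preimage of the diagonal under $y \mapsto (F(a,y), y)$, and it is saturated: if $y \in \mathrm{Fix}(a)$ and $z = F(b,y)$ lies on the same leaf, then commutativity of the action gives $F(a,z) = F(b, F(a,y)) = F(b,y) = z$. Hence $\mathrm{Fix}(a)$ is an invariant subset in the sense of the text, and it is nonempty since it contains $x$; minimality then forces $\mathrm{Fix}(a) = M$, i.e. $a \in \Gamma$. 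As the reverse inclusion $\Gamma \subseteq \mathrm{Iso}(\mathcal{F}(x))$ always holds, I obtain $\mathrm{Iso}(\mathcal{F}(x)) = \Gamma$ for every $x$, and the identification $\mathcal{F}(x) \approx \mathbb{R}^q / \mathrm{Iso}(\mathcal{F}(x))$ shows every leaf is homeomorphic to $\mathbb{T}^l \times \mathbb{R}^{q-l}$.

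Next I would dispose of the extreme values of $l$. The case $l = q$ cannot occur: every leaf would be a compact torus $\mathbb{T}^q$, hence closed and saturated, so minimality would force such a leaf to equal $M$, which is impossible since $\dim M = 2n+q > q$ (as $n \ge 1$). If $l = 0$ the action is faithful and each leaf is the plane $\mathbb{R}^q$, so $\mathcal{F}$ is already a minimal foliation by planes — the first alternative. In the remaining range $0 < l < q$, minimality supplies a dense leaf, so $\mathcal{F}$ is transitive, and $l > 0$ makes it non-faithful; Proposition \ref{contredux} therefore applies and yields a principal $\mathbb{T}^l$-bundle $\rho : M \to M_0$ together with a faithful $(q-l)$-contact action $F_0$ on $M_0$ satisfying $\rho(F(a,x)) = F_0(\overline{a}, \rho(x))$.

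It then remains to check that the reduced foliation $\mathcal{F}_0$ is again minimal and by planes. Minimality follows from the intertwining relation together with the surjectivity and continuity of $\rho$: for any $x$ one has $\mathcal{F}_0(\rho(x)) = \rho(\mathcal{F}(x))$, so $\overline{\mathcal{F}_0(\rho(x))} = \overline{\rho(\mathcal{F}(x))} \supseteq \rho(\overline{\mathcal{F}(x)}) = \rho(M) = M_0$, and since $\rho$ is onto every leaf of $\mathcal{F}_0$ is dense. Being faithful and minimal, $\mathcal{F}_0$ falls under the rigidity step applied to $F_0$, whence all its isotropy groups equal $\ker F_0 = \{0\}$ and every leaf is the plane $\mathbb{R}^{q-l}$; thus $\mathcal{F}_0$ is a minimal foliation by planes and $\mathcal{F}$ reduces to it, giving the second alternative. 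I expect the main obstacle to be precisely the rigidity step: recognising $\mathrm{Fix}(a)$ as a closed saturated set is what converts the global minimality hypothesis into pointwise control of the isotropy, after which the remainder is bookkeeping around Proposition \ref{contredux} and the elementary fact that continuous surjections preserve density.
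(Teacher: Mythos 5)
Your proof is correct and follows essentially the same route as the paper's: identify every isotropy group with $\ker F$, rule out $l=q$ via closed leaves, treat $l=0$ as the first alternative, and for $0<l<q$ apply Proposition \ref{contredux}, then check that the reduced action is minimal and free so its leaves are planes. The only deviations are cosmetic improvements in execution: you actually prove the isotropy rigidity via the closed saturated sets $\mathrm{Fix}(a)$ (a fact the paper merely asserts), and you get minimality of the reduced foliation from continuity and surjectivity of $\rho$ instead of the paper's subsequence argument.
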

\begin{proof}
	Since $(M, \mathcal{F})$ is a minimal contact foliation, all leaves are dense in $M$, and their isotropy groups are all the same, namely the lattice $\Gamma:= \ker F \approx \mathbb{Z}^l$. None of these leaves can be closed, hence $l < q$. If $l=0$, then $M$ is a foliation by planes. Otherwise, it follows from Proposition \ref{contredux} that $\rho: M \to M_0$ is a principal bundle over a $(q-l)$-contact manifold $M_0$. We claim that the contact action defined by  (\ref{reduxact}) is a minimal action whose orbits are all planes. Indeed, the action is minimal: given $\rho(x), \rho(y) \in M_0$, the leaf $\mathcal{F}(x)$ accumulates on $y$, hence there is a sequence $a_n \in \mathbb{R}^q$ such that 
    \begin{equation*}
        F(a_n, x) \to y.
    \end{equation*}
    We write $a_n = a_n^G + a_n^H$, and note that $F(a_n^G, \cdot)$ gets arbitrarily close to the identity. We choose a sub-sequence $a_{n_j}$ such that $F(a_{n_j}^G, x) \to x$, from where we conclude $F(a_{n_j}^H, F(a_{n_j}^G, x)) \to y$, implying
    \begin{equation*}
        F_0(\overline{a_{n_j}}, \rho(x)) = F_0(\overline{a_{n_j}}, \rho(F(a_{n_j}^G, x))) = \rho(F(a_{n_j}^H, F(a_{n_j}^G, x))) \to \rho(y).
    \end{equation*}
    Therefore, $\mathcal{F}_0(\rho(x))$ accumulates on $\rho(y)$, and every orbit of the action $F_0$ is dense. To see now that the orbits are planes $\mathbb{R}^q,$ it is simply a matter of noticing that $F_0$ is free. If $\rho(x) = F_0(\overline{a}, \rho(x)) := \rho(F(a,x))$, then there is $b \in G$ such that 
    \begin{align*}
        F(a,x) = (b+\Gamma)\cdot x = F(b, x).
    \end{align*}
    Thus $a$ acts like an element of $G$, representing the identity on $H$. In particular, minimality implies that the isotropy group of any leaf is $\ker F_0 = \{0\}$. Hence every leaf is a plane.
    
\end{proof}

\begin{corollary}
If the $\mathrm{WGWC}$ is true, then there can be no minimal contact foliations.
\end{corollary}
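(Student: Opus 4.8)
The plan is to argue by contradiction, feeding the dichotomy of Theorem \ref{equivconjmin} into the hypothesis that the $\mathrm{WGWC}$ holds. Suppose, then, that $(M, \mathcal{F})$ is a minimal contact foliation on a closed manifold $M$. Theorem \ref{equivconjmin} tells us that exactly one of two situations must occur: either $\mathcal{F}$ is itself a minimal foliation by planes, or it reduces to a minimal contact foliation by planes on a lower-dimensional quotient manifold $M_0$. I would handle each alternative separately and show that both are forbidden by the $\mathrm{WGWC}$.

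In the first case the argument is immediate: $\mathcal{F}$ is a contact foliation by planes living on the closed manifold $M$, which is precisely what the $\mathrm{WGWC}$ rules out, so this case cannot occur. In the second case I would invoke Proposition \ref{contredux}, which furnishes the reduction $\rho \colon M \to M_0$ as a principal $\mathbb{T}^l$-bundle and, crucially, guarantees that the base $M_0$ is again a \emph{closed} manifold carrying a genuine $(q-l)$-contact structure; the associated reduced action $F_0$ of \eqref{reduxact} then defines a $(q-l)$-contact foliation $\mathcal{F}_0$ on $M_0$. By the conclusion of Theorem \ref{equivconjmin}, this reduced foliation is a foliation by planes. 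Thus $\mathcal{F}_0$ is once more a contact foliation by planes on a closed manifold, contradicting the $\mathrm{WGWC}$. Since both branches of the dichotomy are excluded, no minimal contact foliation can exist.

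The proof is essentially formal, so there is no serious analytic obstacle; the only point that genuinely requires care is verifying that the $\mathrm{WGWC}$ is actually \emph{applicable} in the reduced setting. This hinges on $M_0$ being closed and on $\mathcal{F}_0$ being an honest contact foliation rather than some degenerate remnant of $\mathcal{F}$, both of which are supplied by Proposition \ref{contredux}. Once those two facts are in hand, the corollary follows by reading off the contradiction directly from the statement of the $\mathrm{WGWC}$.
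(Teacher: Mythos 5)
Your proof is correct and follows exactly the route the paper intends: the corollary is read off from the dichotomy of Theorem \ref{equivconjmin}, with Proposition \ref{contredux} guaranteeing that the reduced base $M_0$ is closed and carries a genuine $(q-l)$-contact foliation, so that the $\mathrm{WGWC}$ applies in both branches. Nothing is missing; your care about the applicability of the $\mathrm{WGWC}$ on $M_0$ is precisely the one point worth spelling out.
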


\subsection{Characteristic Foliations and transversals}

\par In this subsection, we look briefly into the holonomy of a contact foliation. Moreover, we show that each $1$-form $\lambda_i$ can be restricted to a contact form on suitable submanifolds of $M$, which will be given as transversals to a characteristic foliation $\mathcal{F}_i$ associated to $\lambda_i$. These first simple properties can be summed up in the following Proposition.

\begin{proposition}\label{holonomyproperties}
    Let $M$ be a $q$-contact manifold.
    \begin{itemize}
        \item[(i)] The holonomy pseudogroup $\mathcal{H}$ of the foliation $\mathcal{F}$ tangent to the orbits of the action consists of symplectomorphisms.
        \item[(ii)] The holonomy pseudogroup $\mathcal{H}_i$ of each characteristic foliation $\mathcal{F}_i$ consists of contactomorphisms. 
    \end{itemize}
    In particular, all the holonomy maps above are volume-preserving transformations.
\end{proposition}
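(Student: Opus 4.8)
The plan is to exhibit, on suitable local transversals, the transverse geometric structures that the two pseudogroups preserve, and then to reduce holonomy invariance to the facts already recorded above, namely that the Reeb fields satisfy $\iota_{R_j}\lambda_i=\delta_{ij}$, $\iota_{R_j}d\lambda_i=0$, $\mathcal{L}_{R_j}\lambda_i=0$ and $\mathcal{L}_{R_j}d\lambda_i=0$. For part (i), the leaves of $\mathcal{F}$ are $q$-dimensional with $T\mathcal{F}=\mathcal{R}$, so a transversal $T$ can be chosen with $T_pT=\xi_p$ at its base point. Since $d\lambda_i|_\xi$ is non-degenerate and $d\lambda_i$ is closed, its restriction $\omega_i:=d\lambda_i|_T$ is a symplectic form on the $2n$-dimensional $T$; this is the candidate transverse structure, one for each $i$. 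For part (ii), I take the characteristic foliation $\mathcal{F}_i$ to be the orbit foliation tangent to $\mathcal{R}\cap\ker\lambda_i=\mathrm{Span}\{R_j:j\neq i\}$, which is $(q-1)$-dimensional and intrinsic to $\lambda_i$ and $\mathcal{R}$; a transversal $T_i$ can be chosen with $T_pT_i=\xi_p\oplus\mathbb{R}R_i$, of dimension $2n+1$. Restricting $\lambda_i$ there gives a $1$-form with $\lambda_i(R_i)=1$, and because $\ker d\lambda_i=\mathcal{R}$ meets $\xi\oplus\mathbb{R}R_i$ exactly in $\mathbb{R}R_i$, the radical of $d\lambda_i|_{T_i}$ is one-dimensional and $\lambda_i\wedge(d\lambda_i)^n$ does not vanish on $T_i$; hence $\lambda_i|_{T_i}$ is a contact form.

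The heart of the argument is to show that each of these forms is \emph{basic}, i.e. holonomy-invariant, for the relevant foliation. A form $\alpha$ on $M$ is basic for a foliation $\mathcal{G}$ when $\iota_X\alpha=0$ and $\mathcal{L}_X\alpha=0$ for every $X$ tangent to $\mathcal{G}$; such a form descends to a well-defined form on each local leaf space and therefore restricts to a holonomy-invariant form on every transversal. For $\mathcal{F}$ the leaf directions are spanned by the $R_j$, and $\iota_{R_j}d\lambda_i=0$ together with $\mathcal{L}_{R_j}d\lambda_i=0$ show that $d\lambda_i$ is basic; hence every map of $\mathcal{H}$ pulls $\omega_i$ back to $\omega_i$ and is a symplectomorphism for each transverse symplectic structure. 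For $\mathcal{F}_i$ the leaf directions are spanned by $\{R_j:j\neq i\}$, and $\iota_{R_j}\lambda_i=\delta_{ij}=0$ for $j\neq i$ together with $\mathcal{L}_{R_j}\lambda_i=0$ show that $\lambda_i$ is basic for $\mathcal{F}_i$; hence every map of $\mathcal{H}_i$ preserves $\lambda_i|_{T_i}$ exactly, and is in particular a contactomorphism.

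The ``in particular'' assertion is then immediate: a symplectomorphism of $(T,\omega_i)$ preserves the volume form $\omega_i^n=(d\lambda_i)^n|_T$, while a map of $\mathcal{H}_i$ preserving $\lambda_i|_{T_i}$ preserves the contact volume $\lambda_i\wedge(d\lambda_i)^n$ on $T_i$; in both cases the transformation is volume-preserving.

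I expect the main obstacle to be the precise justification that a basic form restricts to a genuinely holonomy-invariant form, since a holonomy map $h\colon T_0\to T_1$ has the form $p\mapsto F(a(p),p)$ with $a$ varying over $T_0$, so it is not a single time-$a$ flow map. The point to make carefully is that $dh_p$ differs from $d(F(a(p),\cdot))_p$ only by terms landing in the leaf directions $\mathcal{R}_{h(p)}$; because the candidate form annihilates $\mathcal{R}$ (exactly the condition $\iota_{R_j}\alpha=0$) those extra terms contribute nothing, so that $h^\ast\alpha=\alpha$ follows from the flow-invariance $\mathcal{L}_{R_j}\alpha=0$. A secondary, purely linear-algebraic point is the bookkeeping confirming that $d\lambda_i|_T$ is non-degenerate and that $\lambda_i|_{T_i}$ is contact; this is routine once one uses $TM=\mathcal{R}\oplus\xi$ and $\ker d\lambda_i=\mathcal{R}$, but it must be stated for the transversals to carry the claimed structures.
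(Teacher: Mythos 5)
Your proposal is correct and follows essentially the same route as the paper: the paper likewise restricts $d\lambda_i$ to transversals of $\mathcal{F}$ (symplectic) and $\lambda_i$ to transversals of $\mathcal{F}_i$ (contact, via $\lambda_i\wedge(d\lambda_i)^n$ being a volume form there), and deduces holonomy invariance from the facts that the Reeb flows preserve $\lambda_i$ and $d\lambda_i$. Your formalisation through basic forms, and your explicit treatment of the point that a holonomy map is not a single time-$a$ flow map but differs from one only by leaf-tangent terms, are careful refinements of the paper's more informal ``flowing along the leaves preserves these forms'' argument, not a different method.
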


\par We begin by constructing the foliations $\mathcal{F}_i$.
    
\begin{definition}[\textit{Characteristic distribution}]
 	The \textbf{characteristic distribution} of $\lambda_i$ is the distribution 
	\begin{align*}
        		\mathcal{C}_i(p) &:= \{X_p \in T_pM; ~ \lambda_i(X_p) = 0 \text{  and  } \iota_{X_p} d \lambda_i = 0\} \\
       		 &= \ker\lambda_i\rvert_p \cap \mathcal{R}_p \\
       		 &=\mathrm{Span}\{R_1\rvert_p, \cdots, \widehat{R_i\rvert_p}, \cdots, R_q\rvert_p\}.
   	 \end{align*}
\end{definition}
    
\par Note that this distribution has a constant rank equal to $q-1$ (in particular, they are trivial for contact manifolds). More than that, if the splitting $\mathcal{R} \oplus \xi$ is $C^k$, so are the characteristic distributions $\mathcal{C}_i$. Because the Reeb fields are pairwise commutative, the characteristic distributions are all integrable. In fact, the characteristic foliations are the orbit foliations of the $\mathbb{R}^{q-1}$ ``sub-actions'' $F_i: \mathbb{R}^{q-1} \times M \rightarrow M$ given by
\[
        	(t_1, \cdots, t_{q-1}, p) \mapsto (\phi_1^{t_1}\circ \cdots \circ \phi_{i-1}^{t_{i-1}} \circ \phi_{i+1}^{t_i}\circ \cdots\circ \phi_q^{t_{q-1}})(p).
\]
The underlying foliation $\mathcal{F}_i$ associated with the characteristic distribution $\mathcal{C}_i$ is called the \textbf{characteristic foliation of $\lambda_i$}.

\par Due to the nature of the $q$-contact action, the foliation $\mathcal{F}$ admits no closed transversal.
\begin{proposition}
    There is no $2n$-dimensional closed submanifold of $M$ everywhere transverse to $\mathcal{F}$.
\end{proposition}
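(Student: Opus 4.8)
The plan is to argue by contradiction, extracting from such a transversal an \emph{exact} symplectic form on a closed manifold, which is impossible. So suppose $N \subset M$ is a closed $2n$-dimensional submanifold everywhere transverse to $\mathcal{F}$, and write $i \colon N \hookrightarrow M$ for the inclusion. Since $\dim N = 2n$, $\dim \mathcal{F} = q$ and $\dim M = 2n+q$, transversality forces the sum $T_pN + \mathcal{R}_p = T_pM$ to be direct at every $p \in N$; that is, $T_pN$ is a linear complement of the Reeb distribution $\mathcal{R}_p$ for each $p$.

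Next I would fix an index $i$ and examine the pullback $\omega := i^\ast d\lambda_i$. The key observation is that condition (iii) of Definition \ref{qcontactstructure} says $\ker d\lambda_i = \mathcal{R}$, so $d\lambda_i$, viewed pointwise as an alternating bilinear form on $T_pM$, has radical exactly $\mathcal{R}_p$. Consequently its restriction to \emph{any} complement of $\mathcal{R}_p$ is non-degenerate: if $v \in T_pN$ satisfies $d\lambda_i(v, w) = 0$ for all $w \in T_pN$, then since $d\lambda_i(v, r) = 0$ for all $r \in \mathcal{R}_p$ as well, $v$ lies in the radical $\mathcal{R}_p$, whence $v \in T_pN \cap \mathcal{R}_p = 0$. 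Thus $\omega$ is a non-degenerate closed $2$-form on $N$, i.e. a symplectic form, and in particular $N$ is orientable with $\omega^n$ a volume form.

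Finally I would invoke exactness to reach the contradiction. Because the exterior derivative commutes with pullback, $\omega = i^\ast d\lambda_i = d(i^\ast \lambda_i)$ is exact. But a closed $2n$-manifold admits no exact symplectic form: from $d\omega = 0$ one gets $\omega^n = d\!\left(i^\ast\lambda_i \wedge \omega^{n-1}\right)$, so Stokes' theorem yields $\int_N \omega^n = 0$, contradicting the fact that $\omega^n$ is a volume form on the closed manifold $N$. Hence no such transversal $N$ can exist.

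As for difficulty, the argument is essentially formal once the right object is identified; the only point requiring genuine care --- and the conceptual heart of the proof --- is the linear-algebra step showing that transversality makes $T_pN$ a subspace on which $d\lambda_i$ restricts non-degenerately, which is exactly where the defining property $\ker d\lambda_i = \mathcal{R}$ is used. Everything else (commutativity of $d$ with $i^\ast$, the Stokes computation) is routine, and the global compactness of $N$ enters only at the very last step.
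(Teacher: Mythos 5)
Your proof is correct and follows essentially the same route as the paper's: both show that $i^\ast d\lambda_i$ is non-degenerate on $N$ (hence $(i^\ast d\lambda_i)^n$ is a volume form) and then derive a contradiction from $\left(d\lambda_i\right)^n = d\left(\lambda_i \wedge (d\lambda_i)^{n-1}\right)$ via Stokes' theorem on the closed manifold $N$. The only cosmetic difference is that you justify non-degeneracy by the pointwise linear-algebra argument using $\ker d\lambda_i = \mathcal{R}$, whereas the paper packages the same fact as the statement that $(d\lambda_i)^n = \iota_{R_1}\cdots\iota_{R_q}(dM_i)$ restricts to a volume form on any transversal of $\mathcal{F}$.
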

\begin{proof}
    Indeed, if $N$ was one such manifold, then the exact $2n$-form
    \begin{equation*}
        \iota_{R_1}(\iota_{R_2} \cdots \iota_{R_q}(d  M_i))\cdots) = (d \lambda_i)^n = d (\lambda_i\wedge (d \lambda_i)^{n-1})
    \end{equation*}
    would be an exact volume form on $N$, contradicting Stokes's theorem.
    
\end{proof}
In particular, there are no complete transversals, so no contact foliation is equivalent to a suspension. On the other hand, in general, the characteristic foliations admit transverse submanifolds, and each of these transversals is a contact manifold.
\begin{proposition}\label{transversalsarecontact}   
    If $N$ is a transversal of $\mathcal{F}_i$, then $\lambda_i\rvert_N$ is a contact form on $N$.
\end{proposition}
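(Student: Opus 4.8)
The plan is to reduce the statement to a pointwise, linear-algebraic observation about contracting a volume form. First note that, since $\mathcal{F}_i$ has dimension $q-1$, a transversal $N$ is a submanifold of complementary dimension $\dim M - (q-1) = 2n+1$, and transversality means $T_pN \oplus \mathcal{C}_i(p) = T_pM$ at every $p \in N$. To say that $\lambda_i\rvert_N$ is a contact form is to say that the $(2n+1)$-form $(\lambda_i\rvert_N)\wedge\bigl(d(\lambda_i\rvert_N)\bigr)^n$ is nowhere zero on $N$. Writing $\iota\colon N \hookrightarrow M$ for the inclusion, pullback commutes with the exterior derivative and the wedge product, so this form is exactly $\iota^\ast\Theta_i$, where $\Theta_i := \lambda_i \wedge (d\lambda_i)^n$ is a globally defined $(2n+1)$-form on $M$. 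Thus it suffices to show that $\Theta_i$ restricts to a non-zero form on $T_pN$ for every $p \in N$.

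The key step is to identify $\Theta_i$, up to sign, with the contraction of the volume form $dM_i = \lambda_1 \wedge \cdots \wedge \lambda_q \wedge (d\lambda_i)^n$ by the Reeb fields spanning $\mathcal{C}_i$. Let $\{j_1, \cdots, j_{q-1}\} = \{1, \cdots, q\}\setminus\{i\}$. Using $\iota_{R_j}\lambda_k = \delta_{jk}$ (Proposition \ref{reebfields}) together with $\iota_{R_j}d\lambda_i = 0$, which follows from $\ker d\lambda_i = \mathcal{R}$, each contraction by $R_{j_r}$ acts only on the factors $\lambda_1 \wedge \cdots \wedge \lambda_q$ and removes the factor $\lambda_{j_r}$. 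Carrying out all $q-1$ contractions leaves precisely the factor $\lambda_i$ together with $(d\lambda_i)^n$, so that
\begin{equation*}
    \iota_{R_{j_1}}\cdots\iota_{R_{j_{q-1}}}(dM_i) = \pm\,\lambda_i \wedge (d\lambda_i)^n = \pm\,\Theta_i,
\end{equation*}
the sign depending only on the chosen ordering; this is the same computation used in the proof that $\mathcal{F}$ admits no closed transversal.

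Finally I invoke the elementary fact that if $\Omega$ is a volume form on $T_pM$ and $w_1, \cdots, w_{q-1}$ are linearly independent vectors, then $\iota_{w_1}\cdots\iota_{w_{q-1}}\Omega$ restricts to a non-zero top form on a subspace $W$ precisely when $W$ is complementary to $\mathrm{Span}\{w_1, \cdots, w_{q-1}\}$: completing the $w_r$ to a basis by a basis of $W$ and evaluating $\Omega$ on the whole basis gives a non-zero number. Applying this with $\Omega = (dM_i)_p$, $w_r = R_{j_r}(p)$ and $W = T_pN$, transversality (namely $T_pN \oplus \mathcal{C}_i(p) = T_pM$ with $\mathcal{C}_i(p) = \mathrm{Span}\{R_{j_r}(p)\}$) guarantees $\Theta_i\rvert_{T_pN} \neq 0$ for every $p \in N$. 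Hence $(\lambda_i\rvert_N)\wedge\bigl(d(\lambda_i\rvert_N)\bigr)^n$ is nowhere zero and $\lambda_i\rvert_N$ is a contact form. The only real bookkeeping is the sign in the contraction identity, which is irrelevant for non-vanishing; the genuine content is simply that transversality to $\mathcal{C}_i$ is exactly the condition making the contracted volume form non-degenerate on $N$.
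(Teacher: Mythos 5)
Your proof is correct and follows essentially the same route as the paper's: both identify $\lambda_i\wedge(d\lambda_i)^n$ with the contraction of the volume form $dM_i$ by the Reeb fields $R_j$, $j\neq i$, and then use transversality of $N$ to $\mathcal{C}_i = \mathrm{Span}\{R_j\}_{j\neq i}$ to conclude this form restricts to a volume form on $N$. Your write-up merely makes explicit the pullback bookkeeping and the linear-algebra lemma that the paper leaves implicit.
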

\begin{proof}
    The equality $\mathcal{C}_i = \mathrm{Span}\{R_j\}_{j\neq i}$ implies that each leaf of $\mathcal{F}_i$ is transverse to both $R_i$ and $\xi$. Suppose that $N$ is a transversal of $\mathcal{F}_i$. Then $\dim N = 2n + 1$ and, as before, the fact that $N$ is transverse to $R_j$ for every $j$ other than $i$ means 
    \begin{equation*}
        \iota_{R_1}(\iota_{R_2}( \cdots \widehat{\iota_{R_i}}( \cdots \iota_{R_q}(d  M_i))\cdots) = \lambda_i\wedge d \lambda_i^n
    \end{equation*} 
    is a volume form on $N$. Hence $\lambda_i\rvert_N$ is a contact form.
    
\end{proof}
\begin{remark}\label{transversalsinduceReeborbits} 
    The contact structure associated is the intersection bundle 
    \begin{equation*}
        \ker\lambda_i \cap TN = \left(T\mathcal{F}_i \oplus \xi\right)\cap TN = \xi \cap TN.
    \end{equation*}
    We emphasise that by $\lambda_i\rvert_N$ we mean $j^\ast\lambda_i$, where $j: N \to M$ is an embedding. 
    In particular, $R_i$ need not coincide with the Reeb vector field of $\lambda_i\rvert_N$, \textit{though they share their closed orbits}. 
    Indeed, there is a bundle isomorphism $\Psi: \mathrm{Span}\{R_i\} \oplus \xi \to TN $ determined by fibre-wise projection. 
    In particular, the image $\Psi R_i$ is the unique vector field on $N$ such that $X_i := \Psi R_i - R_i \in TN$. Therefore
    \begin{equation*}
        \lambda_i(\Psi R_i) = \lambda_i(R_i) + \lambda_i(X_i) = 1
    \end{equation*}
    and 
    \begin{equation*} 
        d \lambda_i(\Psi R_i, \cdot) = d \lambda_i(R_i, \cdot) + d \lambda_i(X_i, \cdot) = 0,
    \end{equation*}
    so that $\Psi R_i$ is the Reeb vector field of $\lambda_i\rvert_N$. 
    Moreover, for any closed orbit $\gamma: \mathbb{R} \to N$ of $\Psi R_i$ of period $T$, we have 
    \begin{equation*}
        R_i(\gamma(s)) + X_i(\gamma(s)) = \Psi R_i(\gamma(s)) = \Psi R_i(\gamma(T+s)) = R_i(\gamma(T + s)) + X_i(\gamma(T + s))
    \end{equation*}
    and therefore 
    \begin{equation*}
        R_i(\gamma(s)) - R_i(\gamma(T + s)) = X_i(\gamma(T + s)) -X_i(\gamma(s)),
    \end{equation*}
    where the LHS belongs to $\mathrm{Span}{R_i}\oplus\xi$ while the RHS belongs to $T\mathcal{F}_i$. Since these subspaces are complementary, it follows that $\gamma$ is a closed orbit of period $T$ for both $R_i$ and $X_i$. In particular, $R_i$ admits a closed orbit in $M$.
\end{remark} 

\par The holonomy maps of $\mathcal{F}_i$ connect different transversal by following the leaves of $\mathcal{F}_i$, effectively ``flowing'' along the vector fields $R_j$, $j\neq i$. As all these fields preserve the $q$-contact distribution, so do the holonomy maps, implying: 
\begin{proposition}
    Let $p \in M$ and $q\in \mathcal{F}_i(p)$, and $N_p, N_q$ be transversals through $p$ and $q$, respectively. Then there are small transverse neighbourhoods $W_p$ and $W_q$ around $p$ and $q$ such that the holonomy map $h: W_p \to W_q$ is a contactomorphism. 
\end{proposition}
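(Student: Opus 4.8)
The plan is to prove the stronger statement that $h$ is a \emph{strict} contactomorphism, namely that $h^\ast\big(\lambda_i\rvert_{N_q}\big)=\lambda_i\rvert_{N_p}$; preservation of the contact hyperplane field $\ker(\lambda_i\rvert_N)=\xi\cap TN$ is then immediate. The conceptual reason this should hold is that $\lambda_i$ is a \emph{basic} form for the characteristic foliation $\mathcal{F}_i$. Indeed $T\mathcal{F}_i=\mathcal{C}_i=\mathrm{Span}\{R_j\}_{j\neq i}$, and for each $j\neq i$ we have both $\iota_{R_j}\lambda_i=\lambda_i(R_j)=\delta_{ij}=0$ (Proposition \ref{reebfields}) and $\mathcal{L}_{R_j}\lambda_i=0$ (computed earlier in the text). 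A form basic for $\mathcal{F}_i$ descends to the local leaf space; since two transversals joined by a holonomy map are two local sections of the leaf-space submersion, the descended form must restrict compatibly through $h$. The work is in making this rigorous.

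First I would realise the holonomy concretely through the action $F_i\colon\mathbb{R}^{q-1}\times M\to M$ whose orbits are the leaves of $\mathcal{F}_i$. Fix $a\in\mathbb{R}^{q-1}$ with $F_i(a,p)=q$. Because $N_q$ is transverse to the orbits of $F_i$ and $\dim N_q=2n+1$ is complementary to $\mathcal{C}_i$, the implicit function theorem applied to the transverse component of $(t,x)\mapsto F_i(t,x)$ yields, on a small neighbourhood $W_p\ni p$, a smooth map $b\colon W_p\to\mathbb{R}^{q-1}$ with $b(p)=a$ and $F_i(b(x),x)\in N_q$ for every $x$. The holonomy map is then exactly
\begin{equation*}
    h(x)=F_i(b(x),x),\qquad x\in W_p,
\end{equation*}
a local diffeomorphism onto a neighbourhood $W_q\ni q$ in $N_q$.

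The core computation is to differentiate $\Phi:=j_q\circ h\colon W_p\to M$, where $j_q\colon N_q\hookrightarrow M$ is the inclusion. For $v\in T_xW_p$ the differential splits as
\begin{equation*}
    d\Phi_x(v)=\underbrace{\sum_{j\neq i}\big(db_x(v)\big)_j\,R_j(\Phi(x))}_{\text{time variation, tangent to }\mathcal{F}_i}+\,d\phi_x(v),
\end{equation*}
where $\phi:=F_i(b(x),\cdot)$ is, with $b(x)$ frozen, a genuine diffeomorphism obtained by composing the commuting flows $\phi_j^{t}$, $j\neq i$. Applying $\lambda_i$, the first summand dies because $\lambda_i(R_j)=0$ for $j\neq i$, while the second gives $\lambda_i\big(d\phi_x(v)\big)=(\phi^\ast\lambda_i)_x(v)=\lambda_i(v)$, since each $\phi_j^{t}$ preserves $\lambda_i$ (as $\mathcal{L}_{R_j}\lambda_i=0$) and hence so does $\phi$. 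Therefore $\Phi^\ast\lambda_i=j_p^\ast\lambda_i$, which is precisely $h^\ast\big(\lambda_i\rvert_{N_q}\big)=\lambda_i\rvert_{N_p}$, so $h$ is a contactomorphism.

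The main obstacle is the second step: pinning down the holonomy as $x\mapsto F_i(b(x),x)$ with a smooth time $b(x)$, and verifying that the contribution of the $x$-dependence of $b$ is confined to the leaf-tangent directions $\mathrm{Span}\{R_j\}_{j\neq i}$ (here commutativity of the Reeb fields makes the infinitesimal generators of $F_i$ clean). Once that decomposition is in place, the cancellation $\lambda_i(R_j)=\delta_{ij}=0$ together with the flow-invariance of $\lambda_i$ reduces the conclusion to a one-line calculation; notably, no smallness hypothesis and no conformal factor are needed, which is exactly why one obtains a strict contactomorphism rather than merely a contactomorphism up to scaling.
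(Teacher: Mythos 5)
Your proof is correct and takes essentially the same route as the paper, which simply observes that holonomy maps of $\mathcal{F}_i$ are realised by flowing along the fields $R_j$, $j\neq i$, and that these flows preserve the relevant structure (since $\mathcal{L}_{R_j}\lambda_i=0$ and $\lambda_i(R_j)=0$ for $j\neq i$). Your version makes this sketch rigorous via the implicit function theorem and the splitting of $d\Phi$ into leaf-tangent and flow parts, and in doing so records the slightly sharper conclusion that $h$ is a \emph{strict} contactomorphism, i.e.\ $h^\ast\big(\lambda_i\rvert_{N_q}\big)=\lambda_i\rvert_{N_p}$.
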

\par Using these same arguments, we see that for any transversal $N$ to the foliation $\mathcal{F}$, the restriction of $d \lambda_i$ to $N$ is a volume form. Again, flowing along the leaves of the foliation $\mathcal{F}$ also preserves the bundle $\xi$ and the $2$-forms $d \lambda_i$, so that we have 
\begin{proposition}
    The holonomy maps of a contact foliation are symplectomorphisms. 
\end{proposition}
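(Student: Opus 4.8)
The plan is to exploit the two defining features of $d\lambda_i$ relative to the foliation $\mathcal{F}$: that $\mathcal{R} = \ker d\lambda_i$ (condition (iii) of Definition \ref{qcontactstructure}), and that every Reeb field preserves $d\lambda_i$, i.e. $\mathcal{L}_{R_j}d\lambda_i = 0$ (the Lie-derivative computation recorded after Proposition \ref{reebfields}). Together these say precisely that each $d\lambda_i$ is a \emph{basic} $2$-form for $\mathcal{F}$, and basic forms are holonomy-invariant; the content of the proposition is exactly this standard fact made explicit, together with the observation that the restriction of $d\lambda_i$ to a transversal is nondegenerate.

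First I would fix transversals and describe the holonomy map concretely. Let $N_p$ and $N_q$ be $2n$-dimensional transversals through points $p$ and $q = F(a,p)$ of the same leaf. Transversality gives $T_xM = \mathcal{R}_x \oplus T_xN_p$ for $x$ near $p$, and since the leaves are the orbits of $F$, the holonomy map may be written as $h(x) = F(a(x),x)$ for a smooth function $a\colon W_p \to \mathbb{R}^q$ measuring the group element needed to slide the leaf through $x$ onto $N_q$. Differentiating along a curve through $x$ and applying the chain rule, $dh_x(u)$ splits as a term lying in $\mathcal{R}_{h(x)}$ (coming from $da_x(u)$ and the infinitesimal generators of the action) plus the term $d(F_{a(x)})_x(u)$, where $F_{a(x)} := F(a(x),\cdot)$ is the corresponding composition of Reeb flows.

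Next I would compute $h^\ast d\lambda_i$. Because $\ker d\lambda_i = \mathcal{R}$, the $\mathcal{R}$-valued part of $dh_x(u)$ is annihilated by $\iota_{(\cdot)}d\lambda_i$, so for $u,v \in T_xN_p$ only the flow term survives:
\begin{equation*}
    (h^\ast d\lambda_i)_x(u,v) = d\lambda_i\big(d(F_{a(x)})_x u, \, d(F_{a(x)})_x v\big) = \big(F_{a(x)}^\ast d\lambda_i\big)_x(u,v).
\end{equation*}
Since $F_{a(x)}$ is a composition of the flows $\phi_j^t$ and each satisfies $\mathcal{L}_{R_j}d\lambda_i = 0$, we have $F_{a(x)}^\ast d\lambda_i = d\lambda_i$, whence $h^\ast d\lambda_i = d\lambda_i$ on the transversal. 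To see that $d\lambda_i|_N$ is genuinely symplectic, I would note that the projection $T_xN \to \xi_x$ along $\mathcal{R}$ is an isomorphism (by transversality and a dimension count) and that it intertwines $d\lambda_i|_{T_xN}$ with the nondegenerate form $d\lambda_i|_{\xi_x}$, again because $d\lambda_i$ kills $\mathcal{R}$. Thus each $h$ preserves each of the $q$ symplectic forms $d\lambda_i|_N$ and is a symplectomorphism.

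The only delicate point is the decomposition of $dh_x$ and the claim that its $\mathcal{R}$-component drops out; once one is careful that ``flowing along the leaf'' means applying $F(a(x),\cdot)$ with $a$ varying smoothly with the base point, the rest is a direct consequence of the two properties of $d\lambda_i$. I do not expect any genuine obstacle here, since the result is essentially the holonomy-invariance of basic forms specialised to the present setting.
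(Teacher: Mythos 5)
Your proof is correct and follows essentially the same route as the paper: the holonomy maps are realised by sliding along the leaves via the action, each Reeb flow preserves $d\lambda_i$ (since $\mathcal{L}_{R_j}d\lambda_i = 0$), and the correction terms coming from the varying group element lie in $\mathcal{R} = \ker d\lambda_i$ and hence drop out — your explicit decomposition $h(x) = F(a(x),x)$ just makes precise what the paper phrases as ``flowing along the leaves preserves $\xi$ and the $2$-forms $d\lambda_i$''. The only (minor) divergence is the nondegeneracy of $d\lambda_i\rvert_N$: you obtain it from the isomorphism $T_xN \to \xi_x$ given by projection along $\mathcal{R}$, whereas the paper reuses the contraction argument of Proposition \ref{transversalsarecontact}, namely that $\iota_{R_1}\cdots\iota_{R_q}(dM_i) = (d\lambda_i)^n$ restricts to a volume form on any transversal; both are equally valid.
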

\noindent The last results are exactly the content of Proposition \ref{holonomyproperties}.

\par The fact that transversals of characteristic foliations are contact manifolds can be used to relate the Weinstein Conjecture with its generalised versions. For instance, in the uniform case, one can construct closed transversals using techniques first developed by Tischler in \cite{tischler_fibering_1970}.
\begin{proposition}\label{propositionequalderivative} 
    Let $(M, \mathcal{F})$ be a closed $q$-contact manifold with adapted coframe $\vec{\lambda}$. If there is $i, j$ such that 
    \begin{equation*}
        d\lambda_i = d\lambda_j,
    \end{equation*}
    then $M$ is a fibre bundle over $S^1$. Furthermore, if, for any choice of indices ${i_1, \cdots, i_l} \subset \{1, \cdots, q\}$, one has $d\lambda_{i_1} = \cdots = d\lambda_{i_l}$, then $\dim H^1(M; \mathbb{R}) \geq l-1$.
\end{proposition}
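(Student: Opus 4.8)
The plan is to exploit the equality $d\lambda_i = d\lambda_j$ to produce a closed, non-exact $1$-form, and then invoke Tischler's fibration theorem. First I would set $\mu := \lambda_i - \lambda_j$ and observe that $d\mu = d\lambda_i - d\lambda_j = 0$, so $\mu$ is a closed $1$-form on $M$. The crucial point is that $\mu$ is non-vanishing in the relevant cohomological sense: evaluating on the Reeb fields gives $\mu(R_i) = \lambda_i(R_i) - \lambda_j(R_i) = 1$ and $\mu(R_j) = \lambda_i(R_j) - \lambda_j(R_j) = -1$, so in particular $\mu$ is not identically zero and, more importantly, $\mu$ cannot be exact: if $\mu = df$ for some $f \in C^\infty(M)$, then since $M$ is closed $f$ attains a maximum, where $df = 0$, yet $\mu(R_i) \equiv 1$ never vanishes, a contradiction. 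Hence $\mu$ defines a nonzero class in $H^1(M;\mathbb{R})$.

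Next I would apply Tischler's theorem \cite{tischler_fibering_1970}: a closed manifold admitting a nowhere-vanishing closed $1$-form fibres over $S^1$. Strictly, Tischler's argument requires a \emph{nowhere-vanishing} closed form, so before applying it I would promote $\mu$ to such a form. The standard device is to approximate the class $[\mu] \in H^1(M;\mathbb{R})$ by a rational (hence, after scaling, integral) class represented by a closed form $\mu'$ that is $C^0$-close to $\mu$; since $\mu(R_i) \equiv 1$ stays bounded away from $0$, a sufficiently close approximation $\mu'$ also satisfies $\mu'(R_i) > 0$ everywhere, so $\mu'$ is nowhere vanishing. An integral nowhere-vanishing closed $1$-form integrates to a submersion $M \to \mathbb{R}/\mathbb{Z} = S^1$, which is a fibre bundle by Ehresmann's theorem since $M$ is closed. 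This gives the first claim.

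For the second claim, suppose $d\lambda_{i_1} = \cdots = d\lambda_{i_l}$ for indices $i_1, \dots, i_l$. I would consider the $l-1$ closed $1$-forms
\begin{equation*}
    \mu_k := \lambda_{i_k} - \lambda_{i_l}, \qquad k = 1, \dots, l-1,
\end{equation*}
each closed by the same computation as above. It remains to show their classes $[\mu_1], \dots, [\mu_{l-1}]$ are linearly independent in $H^1(M;\mathbb{R})$. Evaluating on the Reeb fields, $\mu_k(R_{i_m}) = \delta_{km} - \delta_{lm}$ for $m = 1, \dots, l$; said otherwise, the restriction of the forms $\mu_k$ to the span $\mathrm{Span}\{R_{i_1}, \dots, R_{i_l}\}$ realises the $(l-1)$-dimensional space of linear functionals vanishing on the diagonal direction $\sum_m R_{i_m}$. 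To pass from pointwise linear independence to cohomological independence, I would note that any relation $\sum_k c_k \mu_k = df$ with constants $c_k$, evaluated against the (commuting, hence flow-invariant) field $R_{i_m}$, forces $\int_{\gamma} df = 0$ over a closed orbit while the left side integrates the constant $c_m - \sum_k c_k \delta_{lm}$ over the orbit's period; exploiting the existence of closed $R_{i_m}$-orbits (as furnished, in the transversal setting, by Remark \ref{transversalsinduceReeborbits}) or, more robustly, integrating against the invariant volume $dM_{i_1}$ after pairing with $R_{i_m}$, forces all $c_k = 0$.

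The main obstacle I anticipate is precisely this last linear-independence argument: the pointwise linear independence of the $\mu_k$ on $\mathcal{R}$ is immediate, but upgrading it to independence of cohomology classes is the delicate step, since a closed form can be exact without vanishing pointwise. The cleanest route is likely to pair each relation $\sum_k c_k \mu_k = df$ with the Reeb field $R_{i_m}$ and integrate the resulting function identity against the invariant volume form $dM_{i_1}$: the Reeb fields are volume-preserving and $df(R_{i_m}) = R_{i_m}(f)$ integrates to zero by invariance of the volume, which kills the exact part and leaves a nondegenerate linear system in the $c_k$. I expect this to close the argument, but it is the step requiring the most care to state precisely.
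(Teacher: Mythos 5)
Your proposal is correct and follows the same skeleton as the paper: form the differences of the defining $1$-forms, observe they are closed, invoke Tischler's theorem for the fibration, and prove linear independence of the resulting classes in $H^1(M;\mathbb{R})$. Two differences are worth noting. First, your detour to ``promote'' $\mu$ to a nowhere-vanishing form is unnecessary: since $\mu(R_i)\equiv 1$, the form $\mu$ is already nowhere vanishing, which is exactly the hypothesis of Tischler's theorem; the rational approximation you describe is the content of Tischler's \emph{proof}, not a prerequisite for applying it. The paper simply notes that $\alpha := \lambda_i - \lambda_j$ is closed and nowhere vanishing and cites the theorem. Second, for the independence of the classes $[\mu_k]$, the paper argues via critical points: any exact form $df$ on a closed manifold vanishes at a critical point of $f$, whereas a combination $\sum_k c_k\mu_k$ with some $c_m \neq 0$ equals the non-zero constant $c_m$ on the Reeb field $R_{i_m}$, hence is nowhere vanishing and cannot be exact. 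You instead pair the putative relation $\sum_k c_k \mu_k = df$ with $R_{i_m}$ and integrate against the invariant volume $dM_{i_1}$, using that the Reeb flows preserve it; this is equally valid, and you were right to discard your first idea of using closed orbits, which need not exist. If anything, your treatment of arbitrary linear combinations is slightly more explicit than the paper's, which as written only verifies non-exactness of each individual difference form; both mechanisms, however, upgrade to combinations with no extra work, since evaluating at the appropriate Reeb field isolates a single coefficient.
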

\begin{proof}
    The $1$-form $\alpha := \lambda_i-\lambda_j$ is closed and nowhere vanishing. Hence Tischler's theorem \cite{tischler_fibering_1970} proves the first assertion. For the second one, note that the forms $\alpha_j:= \lambda_{i_1}-\lambda_{i_j}$ comprise a l.i. collection of closed forms representing non-trivial cohomology classes. 
    Indeed, the forms $\alpha_j$ can not be exact, since any function $f: M \to \mathbb{R}$ has a critical point $x$, which would imply $d f_x = \alpha_j\rvert_x \equiv 0$, which can not happen as $\alpha_j(R_{i_1}) \equiv 1$.
    
\end{proof}
    
\begin{corollary}
    A contact foliation defined by $\vec{\lambda} = (\lambda_1, \cdots, \lambda_q)$ on a sphere $S^{2n+q}$  can not have $d\lambda_{i_1} = d\lambda_{i_2}$ for more than one pair of indices $i_1, i_2$. In particular, for $q > 2$, spheres admit no uniform $q$-contact structure. 
\end{corollary}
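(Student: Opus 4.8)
The plan is to contradict the vanishing of the first de Rham cohomology of the sphere. Since $n,q\ge 1$, the dimension $2n+q$ is at least $3$, so $H^1(S^{2n+q};\mathbb{R})=0$; I would therefore argue that the presence of more than one pair of coinciding derivatives forces $\dim H^1\ge 2$, which is impossible.

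I would first organise the hypothesis using the equivalence relation $i\sim j\iff d\lambda_i=d\lambda_j$ on $\{1,\dots,q\}$. Every pair with equal derivatives lies inside a single class, and a class of size $m$ accounts for $\binom{m}{2}$ pairs, so ``more than one pair'' breaks into two cases. If some class contains three indices $i_1,i_2,i_3$ with $d\lambda_{i_1}=d\lambda_{i_2}=d\lambda_{i_3}$, I would apply Proposition \ref{propositionequalderivative} with $l=3$ to obtain $\dim H^1\ge 2$. Otherwise every class has at most two elements, so having at least two pairs produces two \emph{disjoint} pairs $\{i_1,i_2\}$ and $\{i_3,i_4\}$ on four distinct indices, with $d\lambda_{i_1}=d\lambda_{i_2}$ and $d\lambda_{i_3}=d\lambda_{i_4}$.

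In this second case I would show the closed $1$-forms $\alpha:=\lambda_{i_1}-\lambda_{i_2}$ and $\beta:=\lambda_{i_3}-\lambda_{i_4}$ represent linearly independent classes. For scalars $(c,c')\ne(0,0)$, evaluating on the Reeb fields gives $(c\alpha+c'\beta)(R_{i_1})=c$ and $(c\alpha+c'\beta)(R_{i_3})=c'$, since the four indices are distinct; hence the combination is nowhere vanishing, and the same critical-point argument used in the proof of Proposition \ref{propositionequalderivative} shows it cannot be exact. Thus $\dim H^1\ge 2$ here as well, and both cases contradict $H^1(S^{2n+q};\mathbb{R})=0$, establishing that at most one pair can occur.

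For the final assertion, a uniform structure with $q>2$ satisfies $d\lambda_1=\cdots=d\lambda_q$ with $q\ge 3$, exhibiting $\binom{q}{2}\ge 3$ pairs of equal derivatives (equivalently, Proposition \ref{propositionequalderivative} directly yields $\dim H^1\ge q-1\ge 2$), which is impossible on the sphere. The only genuinely non-routine point I foresee is the disjoint-pairs case: Proposition \ref{propositionequalderivative} is phrased for a single chain of coinciding derivatives, so there I must reprove linear independence of the two classes by hand via the nowhere-vanishing computation above; the cohomology of the sphere and the bookkeeping of the equivalence classes are entirely routine.
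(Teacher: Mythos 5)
Your proof is correct, and its engine is the same as the paper's: differences of defining forms with equal derivatives are closed, nowhere-vanishing (hence non-exact) $1$-forms, which is incompatible with $H^1(S^{2n+q};\mathbb{R})=0$; the paper offers no separate argument, the corollary being intended as an immediate application of Proposition \ref{propositionequalderivative}. Where you diverge is in doing more work than the statement requires. Because $2n+q\ge 3$, we have $H^1(S^{2n+q};\mathbb{R})=0$, so a contradiction already follows from $\dim H^1\ge 1$, i.e.\ from a \emph{single} pair $i_1\ne i_2$ with $d\lambda_{i_1}=d\lambda_{i_2}$: the closed form $\alpha=\lambda_{i_1}-\lambda_{i_2}$ satisfies $\alpha(R_{i_1})\equiv 1$, so it is nowhere vanishing and cannot be exact. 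Thus spheres admit no such pair at all, and no uniform $q$-contact structure for any $q\ge 2$; the corollary's ``more than one pair'' and ``$q>2$'' are conservative phrasings, and the intended proof is this one-liner. Your two-case analysis --- in particular the disjoint-pairs case, where you rightly observe that Proposition \ref{propositionequalderivative} does not literally cover two classes with different common derivatives, and you reprove linear independence by evaluating $c\alpha+c'\beta$ on $R_{i_1}$ and $R_{i_3}$ --- is sound, but it is only needed to reach the stronger conclusion $\dim H^1\ge 2$, which the sphere argument never requires. That hand-made step would, however, be exactly the right supplement to the Proposition on a manifold with $\dim H^1(M;\mathbb{R})=1$, where the count of pairs genuinely matters.
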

    
We can improve Proposition \ref{propositionequalderivative} in the case of uniform $q$-contact structures, obtaining $M$ as a fibration over the torus $\mathbb{T}^{q-1}$.
    
\begin{theorem}\label{uniformStructuresFibrations} 
    Let $(M, \vec{\lambda}, \mathcal{R} \oplus \xi)$ be a closed uniform $q$-contact manifold, $q\geq 2$. Then $M$ fibres over $\mathbb{T}^{q-1}$. More specifically, for each characteristic foliation $\mathcal{F}_i$ there is a locally trivial fibration $\rho_i: M \to \mathbb{T}^{q-1}$ such that $(M, \mathcal{F}_i, \rho_i)$ is a foliated bundle. In other words, the foliation $\mathcal{F}_i$ is the suspension of an action of $\mathbb{Z}^{q-1}$ on a closed contact manifold $N_0$ whose dimension is $2n+1$.
\end{theorem}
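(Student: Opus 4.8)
The plan is to obtain $\rho_i$ directly from Tischler's construction \cite{tischler_fibering_1970}, applied not to a single $1$-form but to the $(q-1)$-tuple of closed forms furnished by uniformity, and then to recognise the resulting fibration as a flat (foliated) bundle transverse to $\mathcal{F}_i$. First I would fix $i$ and set $\alpha_j := \lambda_i - \lambda_j$ for $j \neq i$. Uniformity gives $d\alpha_j = d\lambda_i - d\lambda_j = 0$, so each $\alpha_j$ is closed, and exactly as in the proof of Proposition \ref{propositionequalderivative} they are pointwise linearly independent (their classes already realise the bound $\dim H^1(M;\mathbb{R}) \geq q-1$). The decisive computation is $\alpha_j(R_k) = \delta_{ik} - \delta_{jk}$: restricted to the Reeb fields spanning $\mathcal{C}_i$, namely $\{R_k\}_{k\neq i}$, the pairing matrix $[\alpha_j(R_k)]_{j,k\neq i} = -\mathrm{Id}$ is invertible, so $(\alpha_2,\dots,\alpha_q)$ is pointwise surjective on $\mathcal{C}_i$ and a fortiori on $TM$.

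Next I would run the torus version of Tischler's argument. Since $H^1(M;\mathbb{Q})$ is dense in $H^1(M;\mathbb{R})$, I choose closed forms $\rho_{i2},\dots,\rho_{iq}$ with rational periods that are $C^0$-close to the $\alpha_j$; after rescaling to integral periods the combined primitive integrates to a smooth map $\rho_i: M \to \mathbb{T}^{q-1}$. Pointwise linear independence and the invertibility of the pairing on $\mathcal{C}_i$ are \emph{open} conditions satisfied by the $\alpha_j$, hence preserved by a sufficiently close approximation; thus $\rho_i$ is a submersion whose fibres remain transverse to $\mathcal{F}_i$. As $M$ is closed, Ehresmann's theorem upgrades $\rho_i$ to a locally trivial fibration, and a fibre $N_0 := \rho_i^{-1}(\mathrm{pt})$ is a closed $(2n+1)$-dimensional transversal of $\mathcal{F}_i$, hence a contact manifold with contact form $\lambda_i\rvert_{N_0}$ by Proposition \ref{transversalsarecontact}.

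Finally, because $\dim \mathcal{F}_i = q-1 = \dim \mathbb{T}^{q-1}$ and $T\mathcal{F}_i$ is transverse to the compact fibres, $T\mathcal{F}_i$ is a flat Ehresmann connection for $\rho_i$; over compact fibres such a connection is complete, so every leaf covers the base and $(M,\mathcal{F}_i,\rho_i)$ is a foliated bundle. Its holonomy is a representation $\pi_1(\mathbb{T}^{q-1}) \cong \mathbb{Z}^{q-1} \to \mathrm{Diff}(N_0)$, landing in contactomorphisms by Proposition \ref{holonomyproperties}(ii), which exhibits $\mathcal{F}_i$ as the suspension of a $\mathbb{Z}^{q-1}$-action on $N_0$.

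I expect the main obstacle to be this last step: justifying completeness of the horizontal foliation so that the fibration is genuinely a suspension rather than merely a submersion transverse to $\mathcal{F}_i$, together with checking that the Tischler perturbation (which alters the cohomology classes) stays compatible with transversality. One clean way to sidestep the completeness issue is to observe that on the universal cover the primitives $h_j$ of $\alpha_j$ satisfy $h_j \circ \phi_k^t = h_j + (\delta_{ik}-\delta_{jk})\,t$, so the $\mathbb{R}^{q-1}$-action generating $\mathcal{F}_i$ is intertwined with a translation flow and the suspension structure is manifest; making this equivariance survive the passage to rational periods is the delicate point, and is where I would concentrate the careful work.
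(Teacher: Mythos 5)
Your proposal is correct and follows essentially the same route as the paper: uniformity yields the closed, pointwise independent forms $\lambda_i - \lambda_j$, a Tischler-type approximation by forms with rational periods produces a submersion $M \to \mathbb{T}^{q-1}$ whose fibres stay transverse to $\mathcal{F}_i$, Ehresmann's theorem plus compactness of the fibres gives the foliated-bundle (suspension) structure, and Proposition \ref{transversalsarecontact} identifies the fibre as a closed contact manifold. The only notable differences are cosmetic: you certify stability of independence and transversality under the perturbation via the Reeb-pairing matrix $\left[\alpha_j(R_k)\right]_{j,k\neq i} = -\mathrm{Id}$ (arguably cleaner than the paper's Hahn--Banach/current determinant argument for the same openness statement), and the completeness issue you flag at the end is resolved exactly as in the paper, by the standard fact that a foliation of complementary dimension transverse to the compact fibres of a fibration of a compact manifold is automatically a suspension (cf. \cite[Chapter V]{camacho_geometric_2013}).
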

\begin{remark} 
	We agree that our manifolds be at least of class $C^1$, and so are the fibrations $\rho_i$. If $M$ is $C^2$ or higher, $\rho_i$ can be taken to be at least $C^2$. 
\end{remark}
\begin{proof}
    We follow the steps of \cite[Corollary II]{tischler_fibering_1970}. For simplicity, let us work with the characteristic foliation $\mathcal{F}_q$, associated with $\lambda_q$, noting that this choice implies no loss of generality. We define 
    \begin{equation*} 
        \alpha_i :=  \lambda_i - \lambda_q, ~~i = 1, \cdots, q-1. 
    \end{equation*}
    As in Proposition \ref{propositionequalderivative}, these are linearly independent, non-vanishing closed forms. Moreover, $\alpha_i$ is a leaf-wise volume form for the foliation $\mathcal{L} _i$ defined by the flow of $R_i$, since $\alpha_i(R_i) \equiv 1$. In other words, the closed form $\alpha_i$ is \textit{transverse} to $\mathcal{L} _i$ (cf. \cite{sullivan_cycles_1976}).
    
    First, let us show that one can perturb the $1$-forms $\alpha_i$ without losing linear independence. Given a finite atlas for $M$, we can define suitable $C^r$ norms on $\wedge^1(M)$ as the maximum over all charts of the sum of $C^r$ norms of coordinate functions. 
    We consider on $\wedge^1(M)$ the topology obtained as direct limit of the $C^r$ norms, and denote by $\mathcal{D}_1$ the corresponding topological $\mathbb{R}$-vector space (cf.\cite{de_rham_varietes_1973}, \cite{CandelFoliationsI2000}). Due to the linear independence of $\{\alpha_i\}$, we can use the Hann-Banach theorem to find for each $\alpha_i$ a corresponding continuous dual (a \textit{current}) $f_i: \mathcal{D}_1 \to \mathbb{R}$ in the strong dual space $\mathcal{D}_1^\ast$, satisfying
    \begin{equation*}
        f_i(\alpha_j) = \delta_{ij}.
    \end{equation*}
    We define a mapping from the $(q-1)$-fold product of $\mathcal{D}_1$ with itself to $\mathbb{R}$ by
        \begin{align*}
            \Phi: \mathcal{D}_1 \times \cdots \times \mathcal{D}_1 &\rightarrow \mathbb{R} \\
            (\omega_1, \cdots, \omega_{q-1}) &\mapsto \det\{f_i(\omega_j)\}.
        \end{align*}
    This mapping is continuous, and since $\Phi(\alpha_1, \cdots, \alpha_{q-1}) = 1$, we can find an open neighbourhood $U$ of $(\alpha_1, \cdots, \alpha_{q-1})$ on which $\Phi$ is always positive. 
    This means the matrix $\{f_i(\omega_j)\}$ is invertible for any choice of $(\omega_1, \cdots, \omega_{q-1}) \in U$, and consequently the set  $\{\omega_1, \cdots, \omega_{q-1}\}$ is linearly independent for every element of $U$.
    
    Now, each $\alpha_i$ can be arbitrarily well approximated by a $C^2$ closed $1$-form $\omega_i$ with the properties:
    \begin{itemize}
        \item[(i)]the foliation $\mathcal{N}_i$ integral to the bundle $\ker \omega_i$ comprises the fibres of a $C^2$ fibration $\rho_i: M \to S^1$, that is, $\mathcal{N}_i(p) = \rho_i^{-1}(\rho(p))$;
        \item[(ii)] the leaves of $\mathcal{N}_i$ are transverse to $\mathcal{L} _i$.
    \end{itemize}
    See, for instance, \cite[Theorem 1]{tischler_fibering_1970} and \cite[Theorem 9.4.2 and Proposition 10.3.14]{CandelFoliationsI2000}) for better exposition of these results. Here we limit ourselves to emphasise that our contact foliations are always of class at least $C^1$ so that all such theorems can be applied to our setting. 
    
    Since the linear independence of a set of $(q-1)$ $1$-forms is an open property, we can choose the $\omega_i$ to be linearly independent. We set
    \begin{align*}
        \rho: M &\rightarrow \mathbb{T}^{q-1} \approx S^1\times\cdots\times S^1\\
        x &\mapsto (\rho_1(x), \cdots, \rho_{q-1}(x)).
    \end{align*}
    The mapping $\rho$ is a $C^2$ submersion since the $\omega_i$ are linearly independent; moreover, it is proper, being a map between compact manifolds. Erehsmann Fibration Theorem shows that $\rho$ is a (locally trivial) fibration. Each fibre $\mathcal{N}(p)$ of this fibration is an intersection
    \begin{equation*}
        \mathcal{N}(p) :=  \bigcap_{i=1}^{q-1}\mathcal{N}_i(p).
    \end{equation*}

    By construction, the fibres are transverse to each of the Reeb fields $R_1, \cdots, R_{q-1}$, and consequently to the characteristic foliation $\mathcal{F}_q$. Since the fibres are compact, it follows that the restriction of $\rho$ to each leaf of $\mathcal{F}_q$ is a covering map of the torus, and that $\mathcal{F}_i$ is the suspension of an action of $\pi_1(\mathbb{T}^{q-1}) \approx \mathbb{Z}^{q-1}$ on the typical fibre $N_0$ (cf. \cite[Chapter V]{camacho_geometric_2013}). Moreover, since $M$ is boundaryless, so is the fibre $N_0$. As in Proposition \ref{transversalsarecontact}, the restriction of $\lambda_q$ to $N_0$ induces a contact structure on the closed $(2n+1)$-manifold $N_0$. 
    
\end{proof}

\begin{corollary}\label{existtrans}
    If $(M, \vec{\lambda}, \mathcal{R} \oplus \xi)$ is a closed $C^1$ uniform $q$-contact manifold and $q\geq 2$, then every characteristic foliation $\mathcal{F}_i$ admits a complete closed transversal.
\end{corollary}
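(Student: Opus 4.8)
The plan is to read the conclusion off directly from Theorem \ref{uniformStructuresFibrations}: I claim that the typical fibre $N_0 = \mathcal{N}(p)$ of the fibration $\rho_i\colon M \to \mathbb{T}^{q-1}$ furnished by that theorem is itself the desired complete closed transversal. By Theorem \ref{uniformStructuresFibrations}, each $\mathcal{F}_i$ is the suspension of a $\mathbb{Z}^{q-1}$-action on the closed $(2n+1)$-dimensional manifold $N_0$, realised as the fibre of a locally trivial $C^1$ fibration $\rho_i$, with $(M, \mathcal{F}_i, \rho_i)$ a foliated bundle. So almost all of the work is already done; what remains is to check the three defining properties of a complete closed transversal, the only nontrivial one being completeness.

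First I would record that $N_0$ has the correct dimension and topological type. Since $M$ is closed and $\rho_i$ is a proper submersion, each fibre is a compact boundaryless submanifold, so $N_0$ is closed; its dimension is $2n+1 = \dim M - (q-1)$, which matches the transverse dimension to the $(q-1)$-dimensional foliation $\mathcal{F}_i$. Transversality of $N_0$ to $\mathcal{F}_i$ was already established inside the proof of Theorem \ref{uniformStructuresFibrations}, where the fibres are shown to be transverse to each Reeb field $R_j$ with $j \neq i$, and these span the characteristic distribution $\mathcal{C}_i = T\mathcal{F}_i$. Thus $N_0$ is a closed submanifold everywhere transverse to $\mathcal{F}_i$.

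The one point deserving an argument is that $N_0$ meets \emph{every} leaf, i.e.\ that the transversal is complete. Because $(M, \mathcal{F}_i, \rho_i)$ is a foliated bundle, the restriction of $\rho_i$ to any leaf $\mathcal{F}_i(x)$ is a covering map onto the base $\mathbb{T}^{q-1}$. A covering map onto a connected space is surjective, so $\rho_i(\mathcal{F}_i(x)) = \mathbb{T}^{q-1}$; hence $\mathcal{F}_i(x)$ meets every fibre of $\rho_i$, and in particular the fibre $N_0$. As $x$ was arbitrary, $N_0$ intersects every leaf of $\mathcal{F}_i$, which is exactly completeness. Collecting the three properties, $N_0$ is a complete closed transversal for $\mathcal{F}_i$, and since $i$ was arbitrary the statement holds for every characteristic foliation. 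I do not anticipate any genuine obstacle here: the corollary is a structural consequence of the suspension picture, and the surjectivity of the leafwise covering projection onto the connected torus $\mathbb{T}^{q-1}$ is the only step that is not immediate from the statement of Theorem \ref{uniformStructuresFibrations}.
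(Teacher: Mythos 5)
Your proposal is correct and is exactly the argument the paper leaves implicit: the corollary is stated without a separate proof because the fibre $N_0$ of the fibration $\rho_i$ from Theorem \ref{uniformStructuresFibrations} is the complete closed transversal, with transversality to the Reeb fields $R_j$, $j\neq i$, and the leafwise covering property $\rho_i\rvert_{\mathcal{F}_i(x)}\colon \mathcal{F}_i(x)\to\mathbb{T}^{q-1}$ already established inside that theorem's proof. Your only addition is to make explicit the (correct) observation that surjectivity of a covering onto the connected torus forces every leaf to meet every fibre, which is precisely the completeness the paper takes for granted.
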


\begin{remark}
    Theorem \ref{uniformStructuresFibrations} is somewhat similar in taste to the characterisation of metric $f$-$K$-structures provided by Goertsches and Loiudice in \cite[Theorem 4.4]{goertsches_how_2020}. Note that a metric $f$-$K$-structures induces a uniform $q$-contact structure for which all the Reeb fields are Killing for some metric. We will deal with such structures in more detail in Section \ref{secriem}.
\end{remark}

In light of Proposition \ref{transversalsarecontact}, we have the following equivalence of Weinstein Conjectures:
    
\begin{theorem}\label{equivconjuniform}
    For closed uniform $q$-contact manifolds, the Weak Generalised Weinstein Conjecture is equivalent to the Weinstein Conjecture.
\end{theorem}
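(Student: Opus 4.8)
The plan is to prove the two implications separately. The direction $\mathrm{WGWC} \Rightarrow \mathrm{WC}$ requires no uniformity at all: it is already part of the hierarchy $\mathrm{SGWC} \Rightarrow \mathrm{WGWC} \Rightarrow \mathrm{WC}$ recorded above, since every closed contact manifold is a (trivially uniform) $1$-contact manifold whose contact foliation is the Reeb flow, and for $q = 1$ a non-plane leaf is precisely a closed Reeb orbit. Hence all the work lies in the reverse implication $\mathrm{WC} \Rightarrow \mathrm{WGWC}$ for a closed uniform $q$-contact manifold $(M, \vec{\lambda}, \mathcal{R} \oplus \xi)$.

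For that direction I would first set aside the trivial case $q = 1$, where the two conjectures are literally the same statement, and assume $q \geq 2$. The essential structural input is supplied by Theorem \ref{uniformStructuresFibrations} together with Corollary \ref{existtrans}: a characteristic foliation $\mathcal{F}_i$ of the uniform structure admits a complete closed transversal $N_0$, which is a closed $(2n+1)$-dimensional manifold. By Proposition \ref{transversalsarecontact}, the restriction $\lambda_i|_{N_0}$ is a contact form, so $(N_0, \lambda_i|_{N_0})$ is a genuine closed contact manifold, to which the Weinstein Conjecture applies.

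The remaining steps are a chain of transfers. Assuming $\mathrm{WC}$, the Reeb field of $(N_0, \lambda_i|_{N_0})$ carries a closed orbit $\gamma$; by Remark \ref{transversalsinduceReeborbits} this Reeb field is the projected field $\Psi R_i$, and $\gamma$ is simultaneously a closed orbit of the ambient Reeb field $R_i$. A periodic orbit of $R_i$ means that some nonzero multiple of the $i$-th basis direction of $\mathbb{R}^q$ lies in the isotropy group $\mathrm{Iso}(\mathcal{F}(\gamma(0)))$, so this isotropy group is nontrivial and the leaf $\mathcal{F}(\gamma(0)) \cong \mathbb{T}^l \times \mathbb{R}^{q-l}$ has $l \geq 1$. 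Thus $M$ carries a non-plane leaf and the $\mathrm{WGWC}$ holds.

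The one point requiring care is the passage from $N_0$ back to $M$: the Weinstein orbit is a priori only an orbit of the reparametrised Reeb field $\Psi R_i$ living on the transversal, and one must know it is actually periodic for $R_i$ itself. This is exactly the content of the computation in Remark \ref{transversalsinduceReeborbits}, which shows that $R_i$ and $\Psi R_i$ share their closed orbits, so no additional argument is needed; everything else is a direct application of the cited results.
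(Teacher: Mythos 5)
Your proof is correct and takes essentially the same route as the paper: both directions rest on the closed transversal supplied by Theorem \ref{uniformStructuresFibrations} and Corollary \ref{existtrans}, the contact structure on it from Proposition \ref{transversalsarecontact}, and Remark \ref{transversalsinduceReeborbits} to transfer the closed Reeb orbit on the transversal back to a closed orbit of $R_i$ in $M$, whose leaf is therefore not a plane. The only cosmetic difference is that you make explicit the isotropy-group argument showing a periodic $R_i$-orbit forces $l \geq 1$, a step the paper leaves implicit.
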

\begin{proof}
    The implication $\mathrm{WGWC} \implies \mathrm{WC}$ is evident since a flow line that is not homeomorphic to $\mathbb{R}$ has to be closed. Conversely, if the Weinstein Conjecture is true and $T_i$ is a closed transversal to $\mathcal{F}_i$, then $(T_i, \lambda_i\rvert_{T_i})$ has a closed Reeb orbit, being a closed contact manifold (cf. Proposition \ref{transversalsarecontact}). This, in turn, implies the existence of a closed orbit $\gamma$ for $\mathcal{L} _i$, as pointed out in Remark \ref{transversalsinduceReeborbits}. The leaf of $\mathcal{F}$ containing the closed orbit $\gamma$ is not a plane. Hence $M$ satisfy the $\mathrm{WGWC}$. 
    
\end{proof}
Since the $\mathrm{WC}$ holds in dimension $3$ \cite{taubes_seibergwitten_2007}, we have
\begin{corollary}
   Every Reeb field of a codimension $2$ uniform contact foliation on a closed manifold has a closed orbit. In particular, every such contact foliation satisfies the $\mathrm{WGWC}$.
\end{corollary}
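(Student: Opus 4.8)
The plan is to push everything down to the three-dimensional Weinstein conjecture, exploiting the complete closed transversals that a uniform structure supplies. First I would record the dimension bookkeeping: here the codimension of $\mathcal{F}$ is $2n = 2$, so $n = 1$, and consequently any transversal to a characteristic foliation $\mathcal{F}_i$ is a submanifold of $M$ of dimension $2n+1 = 3$.

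Next, fix an index $i$ and assume first that $q \geq 2$. By Corollary \ref{existtrans} the characteristic foliation $\mathcal{F}_i$ admits a complete closed transversal $T_i$, and by Proposition \ref{transversalsarecontact} the pair $(T_i, \lambda_i\rvert_{T_i})$ is a closed contact manifold. Since $\dim T_i = 3$, Taubes' resolution of the Weinstein conjecture in dimension three \cite{taubes_seibergwitten_2007} furnishes a closed orbit $\gamma$ of the Reeb field of $\lambda_i\rvert_{T_i}$. By Remark \ref{transversalsinduceReeborbits} this $\gamma$ is at the same time a closed orbit of $R_i$, so $R_i$ possesses a closed orbit in $M$. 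The remaining case $q = 1$ is just the classical contact situation on the closed $3$-manifold $M$ itself, where the conclusion is once more immediate from Taubes' theorem.

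Finally, the existence of a closed $R_i$-orbit forces the leaf of $\mathcal{F}$ through that orbit to be a cylinder $\mathbb{T}^l \times \mathbb{R}^{q-l}$ with $l \geq 1$, rather than a plane, which is precisely the $\mathrm{WGWC}$; equivalently, one may simply invoke the equivalence $\mathrm{WGWC} \Leftrightarrow \mathrm{WC}$ of Theorem \ref{equivconjuniform} together with the three-dimensional $\mathrm{WC}$. I do not expect a genuine obstacle in this argument: its entire weight is borne by Taubes' theorem and by the transversal machinery already assembled in Theorem \ref{uniformStructuresFibrations} and Remark \ref{transversalsinduceReeborbits}. The only points demanding care are the dimension count that pins each transversal to dimension three, and the orbit correspondence of Remark \ref{transversalsinduceReeborbits}, which certifies that a closed orbit of the induced Reeb field on $T_i$ is genuinely a closed orbit of $R_i$ in $M$.
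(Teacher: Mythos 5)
Your proof is correct and is essentially the paper's own argument: the corollary is stated there as an immediate consequence of Theorem \ref{equivconjuniform} (whose proof is exactly the transversal-plus-Remark \ref{transversalsinduceReeborbits} mechanism you spell out) together with Taubes' theorem, since codimension $2$ forces the closed transversals to be $3$-dimensional. Your explicit treatment of the trivial case $q=1$ is a minor extra care the paper leaves implicit, but the route is the same.
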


Another consequence of Corollary \ref{existtrans} is the following, a direct application of a famous result of Plante \cite[Theorem 1.1]{plante_foliations_1975}.

\begin{corollary}
    If a closed manifold $M$ admits a uniform $q$-contact structure, then $H_{2n}(M; \mathbb{R})$ is non-zero.
\end{corollary}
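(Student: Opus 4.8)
The plan is to move the statement, via Poincar\'e duality, down to the dimension of the contact foliation itself, and then to manufacture a nonzero foliation cycle there using Plante's theorem. Since $M$ is closed and orientable (recall $dM_i$ is a volume form) of dimension $2n+q$, Poincar\'e duality gives $H_{2n}(M;\mathbb{R})\cong H^{q}(M;\mathbb{R})$, while universal coefficients over the field $\mathbb{R}$ give $\dim_{\mathbb{R}}H^{q}(M;\mathbb{R})=\dim_{\mathbb{R}}H_{q}(M;\mathbb{R})$. Thus $H_{2n}(M;\mathbb{R})\neq 0$ if and only if $H_{q}(M;\mathbb{R})\neq 0$, and it suffices to prove the latter, where $q=\dim\mathcal{F}$.

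To produce a nonzero class in $H_{q}$, I would realise the contact foliation $\mathcal{F}$ as carrying a nonzero foliation cycle, i.e.\ a closed Ruelle--Sullivan current of dimension $q$. Every leaf of $\mathcal{F}$ is an orbit of the locally free action $F$ of the \emph{abelian} group $\mathbb{R}^q$, hence homeomorphic to $\mathbb{T}^{l}\times\mathbb{R}^{q-l}$; carrying the metric induced from the compact manifold $M$, such a leaf has polynomial, in particular subexponential, growth. This is precisely the growth hypothesis of Plante's theorem \cite{plante_foliations_1975}, and it is made transparent here by the suspension picture of Theorem \ref{uniformStructuresFibrations} together with the complete closed transversal supplied by Corollary \ref{existtrans}. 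Plante's construction then yields a holonomy-invariant transverse measure and an associated foliation cycle $C\in H_{q}(M;\mathbb{R})$ (compare Sullivan \cite{sullivan_cycles_1976}). The invariant transverse measure can also be produced directly: by Proposition \ref{holonomyproperties} the holonomy of $\mathcal{F}$ consists of symplectomorphisms, so the transverse symplectic volume $(d\lambda_i)^{n}$ is holonomy-invariant.

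It remains to certify that $[C]\neq 0$, and this is where I expect the only genuine subtlety to lie, since a priori a foliation cycle could bound. Tautness settles it: by the results of Subsection \ref{taut} the contact foliation $\mathcal{F}$ is taut, so the Rummler--Sullivan criterion furnishes a closed $q$-form $\eta$ restricting to a strictly positive volume form on each oriented leaf. Pairing the positive current $C$ against $\eta$ gives $\langle C,\eta\rangle>0$, whence $[C]\neq 0$ in $H_{q}(M;\mathbb{R})$. Combined with the duality reduction of the first paragraph, this yields $H_{2n}(M;\mathbb{R})\neq 0$.

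The main obstacle is therefore not the existence of the cycle but its non-triviality: existence is essentially automatic from the volume-preserving holonomy (equivalently, from the subexponential growth exposed by the suspension structure), whereas excluding the possibility that the cycle is a boundary is exactly what the tautness of $\mathcal{F}$ buys us. Two routine points I would verify along the way are that Plante's construction operates at our standing regularity (our foliations are at least $C^{1}$, which suffices) and that the leafwise orientation required for the Ruelle--Sullivan current is available --- it is, since $\mathcal{R}$ is parallelisable by the Reeb fields $R_1,\dots,R_q$.
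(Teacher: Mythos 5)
Your Poincar\'e-duality reduction ($H_{2n}(M;\mathbb{R})\neq 0$ iff $H_q(M;\mathbb{R})\neq 0$) and your construction of the invariant transverse measure are fine, but the non-triviality step is a genuine gap, and it is not repairable. Tautness does \emph{not} furnish a \emph{closed} $q$-form positive on the leaves of $\mathcal{F}$: Rummler's criterion \cite{rummler_quelques_1979}, as used in Subsection \ref{taut}, produces only a \emph{relatively} closed ($\mathcal{F}$-closed) form, namely the characteristic form $\lambda$, and pairing a current with a non-closed form does not descend to homology, so $\langle C,\lambda\rangle>0$ says nothing about $[C]$. The distinction between geometric tautness (relatively closed transverse form) and homological tautness (closed transverse form) is exactly what bites here: in fact no closed $q$-form positive on the leaves of $\mathcal{F}$ can exist, and your cycle $C$ is a boundary. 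Indeed, since $\iota_{R_j}(d\lambda_1)^n=0$, the Ruelle--Sullivan current of the symplectic transverse measure is $C(\beta)=\int_M\beta\wedge(d\lambda_1)^n$; but $(d\lambda_1)^n=d\bigl(\lambda_1\wedge(d\lambda_1)^{n-1}\bigr)$ is exact, so Stokes's theorem gives $C(\beta)=0$ for \emph{every} closed $q$-form $\beta$, i.e. $[C]=0$ in $H_q(M;\mathbb{R})$; and since $C(\beta)>0$ whenever $\beta$ is positive on leaves, no closed positive-on-leaves form can exist. This is the same phenomenon behind the proposition in Section \ref{contactfol} that $\mathcal{F}$ admits no closed transversal. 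The case $q=1$ makes the error vivid: the Reeb flow of the standard contact form on $S^3$ is taut (geodesible), yet $H_1(S^3;\mathbb{R})=0$, whereas your argument, run verbatim, would prove $H_1(S^3;\mathbb{R})\neq 0$.

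The approach also cannot be rescued by choosing a cleverer invariant measure, because the target itself fails: Example \ref{flatbundleexample} applied to $M=S^3$ with $l=1$ gives a uniform $2$-contact structure on $S^1\times S^3$, for which $H_q(M;\mathbb{R})=H_{2n}(M;\mathbb{R})=H_2(S^1\times S^3;\mathbb{R})=0$, so \emph{all} foliation cycles of $\mathcal{F}$ there are null-homologous. This is also where your route departs from the paper's: the intended application of Plante \cite{plante_foliations_1975} is not to $\mathcal{F}$ but to the \emph{characteristic} foliation $\mathcal{F}_i$, which --- unlike $\mathcal{F}$ --- does admit a complete closed transversal $N_0$ (Corollary \ref{existtrans}) and carries the invariant transverse measure coming from its contactomorphism holonomy (Proposition \ref{holonomyproperties}); the closed transversal is precisely the non-triviality certificate (via a positive intersection number) that $\mathcal{F}$ lacks. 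Note, however, that this argument produces non-zero classes in $H_{q-1}(M;\mathbb{R})$ (the asymptotic cycle) and in $H_{2n+1}(M;\mathbb{R})$ (the class $[N_0]$), consistent with $H_3(S^1\times S^3;\mathbb{R})\neq 0$ in the example above; so the degree in the statement should be read as coming from the transversal to $\mathcal{F}_i$, i.e. $2n+1$, and not as $2n$.
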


Recall that an overtwisted contact $3$-manifold is one which contains an embedded overtwisted disk. In higher dimensions, the role of the overtwisted disk can be played by a \emph{Plastikstufe} \cite{niederkruger_plastikstufe_2006}, or, more generally, by overtwisted $2n$-closed balls \cite{borman_existence_2015}. In any case, it is known that such contact manifolds satisfy the $\mathrm{WC}$ \cite{albers_weinstein_2009,borman_existence_2015}. If we define a $q$-contact structure to be overtwisted when it ``contains'' an overtwisted contact structure in the classical sense, we can conclude that all such structures satisfy the $\mathrm{WGWC}$. More precisely:

\begin{definition}[\textit{Overtwisted $q$-contact structures}]
    A $q$-contact structure is \textbf{overtwisted} when, for any of the defining $1$-forms $\lambda_i$, the characteristic foliation $\mathcal{F}_i$ admits a transversal $T$ such that $(T, \lambda_i)$ is an overtwisted contact manifold.
\end{definition}

From Corollary \ref{existtrans} and Theorem \ref{equivconjuniform} it follows:

\begin{theorem}
    Every closed uniform overtwisted $q$-contact manifold satisfies the $\mathrm{WGWC}$.
\end{theorem}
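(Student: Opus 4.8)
The plan is to promote the overtwistedness that is witnessed on some (a priori non-closed) transversal to a \emph{closed} transversal, to which the already-established cases of the Weinstein conjecture apply, and then to propagate the resulting Reeb orbit back into $M$ exactly as in the proof of Theorem \ref{equivconjuniform}. Concretely: the definition of an overtwisted $q$-contact structure supplies a defining form $\lambda_i$ and a transversal $T$ to the characteristic foliation $\mathcal{F}_i$ such that $(T, \lambda_i\rvert_T)$ is overtwisted, the overtwistedness being detected by a local model --- an overtwisted disk when $2n+1=3$, or a Plastikstufe / overtwisted ball \cite{borman_existence_2015} in higher dimensions --- living near some point $p \in T$. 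Independently, since $M$ is closed, uniform and $q \geq 2$, Corollary \ref{existtrans} furnishes a complete \emph{closed} transversal $T_0$ to this same $\mathcal{F}_i$, which by Proposition \ref{transversalsarecontact} carries the contact form $\lambda_i\rvert_{T_0}$.

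The heart of the argument is to transport the local overtwisted model from $T$ into $T_0$. Because $T_0$ is complete it meets the leaf of $\mathcal{F}_i$ through $p$, and by Proposition \ref{holonomyproperties}(ii) the holonomy of $\mathcal{F}_i$ consists of contactomorphisms; composing the relevant holonomy maps gives a contactomorphism from a neighbourhood of $p$ in $T$ onto an open set in $T_0$. Since overtwistedness is a contactomorphism-invariant property, the image of the local model exhibits $(T_0, \lambda_i\rvert_{T_0})$ as a \emph{closed} overtwisted contact manifold.

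I would then invoke the known positive instances of the Weinstein conjecture for overtwisted contact manifolds \cite{albers_weinstein_2009, borman_existence_2015}: the Reeb field of $\lambda_i\rvert_{T_0}$ admits a closed orbit $\gamma$. Finally, exactly as in the proof of Theorem \ref{equivconjuniform}, Remark \ref{transversalsinduceReeborbits} converts $\gamma$ into a closed orbit of the sub-flow $\mathcal{L}_i$ generated by $R_i$, so the leaf of $\mathcal{F}$ through this orbit is not homeomorphic to a plane and $M$ satisfies the $\mathrm{WGWC}$.

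The step I expect to be the main obstacle is the transport: one must be sure the local overtwisted object actually lands inside the closed transversal $T_0$ under holonomy, which requires completeness of $T_0$ (to guarantee the relevant leaf is met) together with the contactomorphism property of the $\mathcal{F}_i$-holonomy and the contactomorphism-invariance of overtwistedness. If one instead reads the definition of overtwistedness as already referring to a closed transversal, this step collapses and the theorem follows immediately by feeding the closed overtwisted transversal of Corollary \ref{existtrans} into the argument of Theorem \ref{equivconjuniform}.
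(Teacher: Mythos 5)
Your closing observation is, in fact, the paper's entire proof: the paper performs no transport step at all, reading its definition of overtwistedness as supplying overtwistedness of a \emph{closed} transversal, and then simply chains together Corollary \ref{existtrans} (existence of a closed complete transversal $T_0$ of $\mathcal{F}_i$ when $M$ is closed, uniform and $q \geq 2$), the known Weinstein conjecture for closed overtwisted contact manifolds \cite{albers_weinstein_2009, borman_existence_2015}, and the mechanism of Theorem \ref{equivconjuniform} together with Remark \ref{transversalsinduceReeborbits}, which converts the closed Reeb orbit of $\lambda_i\rvert_{T_0}$ into a closed orbit of $R_i$ and hence a non-planar leaf of $\mathcal{F}$. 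In that sense the correct core of your proposal coincides exactly with the paper's argument.

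The transport step you place at the centre, and rightly flag as the main obstacle, does contain a genuine gap as sketched -- and the paper does nothing to fill it either. Holonomy maps of $\mathcal{F}_i$ are germs along leaf paths: they are contactomorphisms defined on \emph{some} neighbourhood of $p$ in $T$, but nothing guarantees that this neighbourhood contains the overtwisted disk, Plastikstufe or overtwisted ball, which is a compact object of definite size rather than a germ at $p$. One can try to globalise using the foliated bundle structure of Theorem \ref{uniformStructuresFibrations}: over the universal cover $\mathbb{R}^{q-1}$ of the base the lifted foliation is a product, and projection along the lifted leaves onto the fibre $N_0 \cong T_0$ is globally defined. However, a leaf of $\mathcal{F}_i$ may meet the transversal $T$ many times, so this projection, restricted to a neighbourhood of the overtwisted object, is in general only a contact \emph{immersion} into $T_0$; an immersed copy of the model does not certify overtwistedness, since the results of \cite{albers_weinstein_2009, borman_existence_2015} require an embedded one. (For a Plastikstufe there is the further issue that its core need not be simply connected, so even the lift to the cover can be obstructed.) Consequently, under the literal ``some transversal'' reading of the definition your argument is incomplete, and repairing it would require a genuinely new idea; under the reading the paper evidently intends, your final paragraph already \emph{is} the whole proof.
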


\subsection{Tautness}\label{taut}    
This section shows that contact foliations satisfy a particular rigidity property. Namely, they are taut in the sense of Sullivan. We recall the concept of tautness for foliations of arbitrary codimension.
\begin{definition}[\textit{Harmonic foliations}] 
    A foliation is \textbf{geometrically taut} (also called \textbf{harmonic}) if there is a Riemannian metric on the ambient manifold for which every leaf is a minimal submanifold.
\end{definition}
It is a simple calculation to check that the characteristic form $\lambda$ satisfies
\begin{equation*}
    d\lambda(X_1, \cdots, X_{q+1}) \equiv 0
\end{equation*}
whenever at least $q$ of the fields $X_1, \cdots, X_{q+1}$ are tangent to $\mathcal{R}.$
In other words, $\lambda$ is \textit{$\mathcal{F}$-closed.} Hence, by Rummler's criterion \cite{rummler_quelques_1979}, $\mathcal{F}$ is taut, and the $2$-tensor on $\mathcal{R}$
    \begin{equation*}
        g^\tau = \sum_{i=1}^q\lambda_i\otimes\lambda_i
    \end{equation*}
is a leaf-wise Riemannian metric with respect to which every leaf of $\mathcal{F}$ is a minimal submanifold (has $0$ mean normal curvature). Adapting the calculations from \cite[Lemma 10.5.6]{CandelFoliationsI2000} we obtain:

\begin{proposition}\label{Fisharmonic}
    There is a Riemannian metric on $M$ such that any submanifold of dimension $2 \leq d \leq q$ is realisable as an orbit of the action of $\mathbb{R}^d$ induced by Reeb vector fields $R_{i_1}, \cdots, R_{i_d}$ is a minimal submanifold.
\end{proposition}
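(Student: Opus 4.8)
The plan is to exhibit a \emph{single} Riemannian metric on $M$ that works simultaneously for every sub-collection of Reeb fields, and to verify minimality of the corresponding orbits through Rummler's criterion, exactly as was done above for the full foliation $\mathcal{F}$. First I would extend the leaf-wise tensor $g^\tau = \sum_i \lambda_i\otimes\lambda_i$ to a genuine metric $g$ on $M$ by declaring the splitting $TM = \mathcal{R}\oplus\xi$ to be $g$-orthogonal and choosing \emph{any} metric on $\xi$. Because $\lambda_i(R_j) = \delta_{ij}$, the Reeb fields $R_1, \dots, R_q$ then form a global orthonormal frame of $\mathcal{R}$, and one checks at once that the $g$-metric dual of $R_i$ is exactly $\lambda_i$ (it pairs to $\delta_{ij}$ with the $R_j$ and annihilates $\xi$).

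Next, fix an index set $I = \{i_1 < \dots < i_d\}$ with $2 \le d \le q$, and let $\mathcal{F}_I$ denote the orbit foliation of the $\mathbb{R}^d$-action generated by $R_{i_1}, \dots, R_{i_d}$, whose tangent distribution is $\mathcal{R}_I := \mathrm{Span}\{R_{i_1}, \dots, R_{i_d}\}$. Using the previous paragraph, the Rummler characteristic $d$-form of $\mathcal{F}_I$ relative to $g$ is precisely $\lambda_I := \lambda_{i_1}\wedge\cdots\wedge\lambda_{i_d}$: it restricts to the leaf-wise volume form (it sends the orthonormal frame $R_{i_1}, \dots, R_{i_d}$ to $1$) and it vanishes as soon as one argument lies in the $g$-orthogonal complement $\mathcal{R}_I^\perp = \mathrm{Span}\{R_j\}_{j\notin I}\oplus\xi$. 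The crucial observation is that this identification, as well as the relative-closedness checked below, depends only on $g\rvert_\mathcal{R} = g^\tau$ and on the orthogonality $\mathcal{R}\perp\xi$; it is \emph{independent} of the metric chosen on $\xi$. Hence one and the same $g$ serves every index set $I$ at once.

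It then remains to prove that $\lambda_I$ is $\mathcal{F}_I$-closed, i.e.\ that $d\lambda_I(X_1, \dots, X_{d+1}) = 0$ whenever at least $d$ of the $X_m$ are tangent to $\mathcal{R}_I$. Expanding
\[
    d\lambda_I = \sum_{k=1}^{d}(-1)^{k-1}\lambda_{i_1}\wedge\cdots\wedge d\lambda_{i_k}\wedge\cdots\wedge\lambda_{i_d},
\]
each summand is a wedge of $d-1$ one-forms with the single two-form $d\lambda_{i_k}$. When $d+1$ vectors are fed in with at most one of them outside $\mathcal{R}$, every shuffle term forces $d\lambda_{i_k}$ to be evaluated on a pair containing at least one vector of $\mathcal{R}$; since $\ker d\lambda_{i_k} = \mathcal{R}$ gives $\iota_R d\lambda_{i_k} = 0$ for every $R$ tangent to $\mathcal{R}$, that pairing vanishes. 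Thus each summand, and hence $d\lambda_I$, vanishes on such $(d+1)$-tuples, so $\lambda_I$ is $\mathcal{F}_I$-closed. Rummler's criterion (the computation of \cite[Lemma 10.5.6]{CandelFoliationsI2000} invoked above for $\mathcal{F}$) then yields that every leaf of $\mathcal{F}_I$ is a minimal submanifold of $(M, g)$.

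I expect no serious analytic obstacle: the content is bookkeeping together with the pigeonhole observation on $d\lambda_{i_k}$. The only point demanding care — and the sole place the argument could go wrong — is the uniformity claim that a \emph{single} metric makes all the sub-orbits minimal. This is secured precisely because the characteristic form of $\mathcal{F}_I$ is the \emph{intrinsic} form $\lambda_I$ and the Rummler condition on it is metric-free, so no conflict can arise between different choices of $I$.
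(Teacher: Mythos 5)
Your proof is correct and takes essentially the same route as the paper: the paper uses the very same metric $g = g^\tau \oplus g^\perp$ (with $\mathcal{R}$ declared orthogonal to $\xi$ and $g^\perp$ arbitrary) and deduces minimality of the orbits of every sub-action from Rummler's criterion, citing the computation of \cite[Lemma 10.5.6]{CandelFoliationsI2000} instead of writing out the verification that each $\lambda_I$ is the characteristic form of $\mathcal{F}_I$ and is relatively closed. Your pigeonhole argument using $\ker d\lambda_{i_k} = \mathcal{R}$ is exactly the detail the paper leaves to that reference.
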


The metric, in this case, consists of the sum of $g^\tau$ with any metric $g^\perp$ on $\xi$. Since minimal curves are geodesics, the case $d=1$ says that the integral curves of $R_i$, when parameterised by arclength, are geodesics of $g$.

\begin{corollary}\label{flowlinesaregeodesics}
    There is a metric on $M$ such that each Reeb vector field is of unit length, their flow lines are geodesics, and the foliations $\mathcal{F}$ and $\mathcal{F}_i$ are harmonic foliations.
\end{corollary}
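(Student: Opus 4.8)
The plan is to exhibit the explicit metric promised in the discussion following Proposition \ref{Fisharmonic}, namely $g := g^\tau \oplus g^\perp$, where $g^\perp$ is an arbitrary Riemannian metric on $\xi$ and the two summands are declared mutually orthogonal, and then to read off all three assertions from the defining relations of the Reeb fields. First I would verify the orthonormality claim directly. Because $\lambda_k(R_i) = \delta_{ki}$ by Proposition \ref{reebfields}(i) and $\mathcal{R} \perp \xi$ by construction, for any vector $V$ one has $g(V, R_i) = g^\tau(V, R_i) = \sum_k \lambda_k(V)\lambda_k(R_i) = \lambda_i(V)$; in particular $g(R_i, R_j) = \lambda_i(R_j) = \delta_{ij}$, so the Reeb fields form a $g$-orthonormal frame of $\mathcal{R}$ and each $R_i$ is of unit length.

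For the harmonicity of $\mathcal{F}$ and of the $\mathcal{F}_i$, I would simply identify each as an orbit foliation of the type covered by Proposition \ref{Fisharmonic}. The leaf of $\mathcal{F}$ through a point is the orbit of the full $\mathbb{R}^q$-action generated by $R_1, \dots, R_q$ (the case $d=q$), while the leaf of $\mathcal{F}_i$ is the orbit of the sub-action generated by $\{R_j\}_{j \neq i}$ (the case $d = q-1$). Applying Proposition \ref{Fisharmonic} to the metric $g$ shows every such leaf is a minimal submanifold, so both $\mathcal{F}$ and each $\mathcal{F}_i$ are harmonic.

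It remains to treat the flow lines of each $R_i$. Since $R_i$ has unit length, its integral curves are already parameterised by arclength, so it suffices to prove $\nabla_{R_i} R_i = 0$ for the Levi-Civita connection of $g$. Using the Koszul formula together with $[R_i, R_j] = 0$ and the constancy of $g(R_i, R_j)$, one finds $g(\nabla_{R_i} R_i, R_k) = 0$ for all $k$; and for $W \in \xi$ the same formula reduces $g(\nabla_{R_i}R_i, W)$ to $-g([R_i, W], R_i) = -\lambda_i([R_i, W])$. The latter vanishes because the invariant formula $d\lambda_i(R_i, W) = R_i\lambda_i(W) - W\lambda_i(R_i) - \lambda_i([R_i, W])$, combined with $\lambda_i(W) = 0$ (as $W \in \xi \subset \ker \lambda_i$), $\lambda_i(R_i) \equiv 1$, and $R_i \in \ker d\lambda_i$, forces $\lambda_i([R_i, W]) = 0$. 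Hence $\nabla_{R_i} R_i$ is orthogonal to the entire frame and therefore vanishes.

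The only genuine subtlety — the hard part — is precisely this last boundary case $d = 1$: Proposition \ref{Fisharmonic} is stated only for submanifolds of dimension at least two, so the geodesic property of the individual Reeb flow lines is not a formal consequence but requires the short independent computation above. That computation hinges on exactly the two structural facts used throughout, namely that each $R_i$ is $g^\tau$-orthonormal and that $R_i$ spans $\ker d\lambda_i$; everything else follows immediately from the choice of metric and from Proposition \ref{Fisharmonic}.
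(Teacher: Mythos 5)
Your proof is correct, and its skeleton coincides with the paper's: the same metric $g = g^\tau \oplus g^\perp$, orthonormality of the Reeb frame extracted from $\lambda_k(R_i) = \delta_{ki}$, and an appeal to Proposition \ref{Fisharmonic} for the minimality of the leaves of $\mathcal{F}$ and $\mathcal{F}_i$. Where you genuinely diverge is the geodesic claim: the paper simply invokes ``the case $d=1$'' of Proposition \ref{Fisharmonic} together with the fact that minimal curves are geodesics, even though that proposition is stated only for $2 \leq d \leq q$; you instead prove $\nabla_{R_i}R_i = 0$ directly from the Koszul formula, using $[R_i,R_j]=0$, the constancy of $g(R_i,R_j)$, and $\lambda_i([R_i,W]) = 0$ for $W \in \xi$, which follows from $\iota_{R_i}d\lambda_i = 0$. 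This is exactly the classical geodesibility criterion (a $1$-form $\alpha$ with $\alpha(X) = 1$ and $\iota_X d\alpha = 0$, here $\alpha = \lambda_i$) specialised to this metric, and it buys a rigour that the paper's citation of the excluded case does not literally provide. One small remark: the same boundary issue also affects the foliations themselves --- for $q=2$ the characteristic foliations $\mathcal{F}_i$ are one-dimensional, and for $q=1$ so is $\mathcal{F}$, so Proposition \ref{Fisharmonic} as stated does not apply to them either; your computation $\nabla_{R_j}R_j = 0$ does cover these cases, since one-dimensional minimal submanifolds are precisely geodesics parameterised proportionally to arclength, but your write-up applies the proposition to $\mathcal{F}_i$ with $d = q-1$ without flagging this, and it would be worth saying explicitly that the low-dimensional cases are absorbed by the direct calculation.
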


There are several links between harmonic foliations, harmonic functions, and harmonic differential forms (cf. \cite{kamber_harmonic_1982}). We are particularly interested in the following improved version of Proposition \ref{lambdacoord}.

\begin{proposition}\label{lambdacoordharmonic}
    Around each point $p \in M$ one can find coordinates $(x,y, z) \in \mathbb{R}^{2n}\times\mathbb{R}^q$ and, for each $1\leq i\leq q$ and $1\leq j\leq n$, functions $f^i_j, g^i_j: \mathbb{R}^{2n}\to\mathbb{R}$ such that:
        \begin{itemize}
            \item[(i)] $\lambda_i = d z_i + \sum_j\left(-f^i_j(x,y)d x_j + g^i_j(x,y) d y_j\right) \text{ for every } i$;
            \item[(ii)] each coordinate function $z_i$ and differential form $d z_i$ is harmonic.
        \end{itemize}
\end{proposition}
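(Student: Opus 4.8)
The plan is to start from the coordinates of Proposition~\ref{lambdacoord} and the taut metric of Corollary~\ref{flowlinesaregeodesics}, and then to correct the leaf-wise coordinates $z_i$ by functions of the transverse variables so as to solve a Laplace equation. Since a function is harmonic exactly when its differential is a harmonic $1$-form, the two clauses of (ii) are equivalent and it suffices to arrange $\Delta z_i = 0$. First I would fix, around $p$, coordinates $(x,y,z)$ as in Proposition~\ref{lambdacoord}, so that $R_i = \partial_{z_i}$ and $\lambda_i = dz_i + \beta_i$ with $\beta_i := \sum_j(-f^i_j\,dx_j + g^i_j\,dy_j)$ a one-form with no $dz$-component and coefficients depending only on $(x,y)$; thus $\beta_i = \pi^\ast\bar\beta_i$ is pulled back from the transverse disc $D^{2n}$ along the projection $\pi\colon (x,y,z)\mapsto (x,y)$. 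On this chart I would take the taut metric $g = g^\tau + g^\perp$ with the flat choice $g^\perp := \sum_j(dx_j\otimes dx_j + dy_j\otimes dy_j)$. One checks directly that $\mathcal{R}\perp\xi$, that $\{R_1,\dots,R_q\}$ is orthonormal (so that $\lambda_i = R_i^\flat$), and that $g$ is invariant under the local flow of each $\partial_{z_i}$; by Corollary~\ref{flowlinesaregeodesics} the leaves of $\mathcal{F}$ are minimal for $g$, and $g$ is bundle-like with a flat transverse metric $\bar g$ on $D^{2n}$.

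Next I would reduce the harmonicity of $z_i$ to a purely transverse quantity. Because $dz_i = R_i^\flat - \beta_i$ and $\beta_i$ annihilates $\mathcal{R}$, its metric dual satisfies $\beta_i^\sharp \in \xi$, whence $\nabla z_i = R_i - \beta_i^\sharp$ and \[ \Delta z_i = \operatorname{div}(R_i) - \operatorname{div}(\beta_i^\sharp). \] The $\partial_{z_i}$-invariance of $\operatorname{vol}_g$ gives $\mathcal{L}_{R_i}\operatorname{vol}_g = 0$, so $\operatorname{div}(R_i) = 0$. For the remaining term I would expand $\operatorname{div}(\beta_i^\sharp)$ in an orthonormal frame adapted to $\mathcal{R}\oplus\xi$; the leaf-tangent part equals $-g\!\left(\beta_i^\sharp, H\right)$, where $H = \sum_a (\nabla_{R_a}R_a)^\xi$ is the mean curvature of the leaves, and this vanishes by minimality. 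What survives is the transverse divergence, which, $g$ being bundle-like, descends to $-\delta_{\bar g}\bar\beta_i$ on $D^{2n}$; hence $\Delta z_i = \pi^\ast\!\left(\delta_{\bar g}\bar\beta_i\right)$ is a function of $(x,y)$ alone.

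Finally I would remove this obstruction using the residual freedom in the chart. Replacing $z_i$ by $z_i + \pi^\ast\bar h_i$ for a transverse function $\bar h_i$ on $D^{2n}$ keeps $R_i = \partial_{z_i}$ and preserves the shape~(i) (it only changes $\beta_i$ into $\beta_i - d\bar h_i$), while altering the Laplacian by $\Delta_g(\pi^\ast\bar h_i) = \pi^\ast(\Delta_{\bar g}\bar h_i)$ --- again by the minimality reduction applied to the transverse gradient. Since $(D^{2n},\bar g)$ is flat, the Poisson equation $\Delta_{\bar g}\bar h_i = -\delta_{\bar g}\bar\beta_i$ is solvable on a smaller ball, and the resulting $z_i' := z_i + \pi^\ast\bar h_i$ then satisfies $\Delta_g z_i' = 0$, as desired. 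The delicate point throughout is the second paragraph: reducing the ambient divergence (equivalently, the codifferential) of a transverse object to its transverse counterpart, which hinges precisely on the vanishing of the mean curvature of the leaves --- that is, on the tautness established in Corollary~\ref{flowlinesaregeodesics}. Care is also needed to ensure the metric is bundle-like so that $\bar g$, $\bar\beta_i$ and the reduced equation are genuinely defined on $D^{2n}$ and independent of $z$.
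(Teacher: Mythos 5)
Your proposal takes a genuinely different route from the paper, and most of its machinery is sound, but there is one real gap: the metric. Harmonicity in item (ii) is not relative to a metric of your choosing; it refers to a metric fixed \emph{before} the chart is chosen. The paper's own proof opens with ``Given $p \in (M,g)$'', where $g$ is the taut metric of Corollary \ref{flowlinesaregeodesics}, and in the only place the proposition is used, Theorem \ref{harmonicfieldsarebasic}, the metric is a prescribed R-metric on a closed manifold and the local forms $dz_i$ must be harmonic \emph{for that given metric}, since they are subtracted from Hodge-harmonic $1$-forms of $(M,g)$ to produce the harmonic form $\mu_0$. You instead manufacture the metric on the chart, declaring $g^\perp$ to be flat in your coordinates; the transverse part of the given metric has no reason to take that form in any chart, and a metric built chart-by-chart cannot be matched with the global one. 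So, as written, you prove a weaker statement --- existence of coordinates together with \emph{some} auxiliary local metric making them harmonic --- which is not what the paper needs downstream.

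The gap is repairable inside your own scheme, and your scheme is in one respect more careful than the paper's. The paper takes an arbitrary foliated chart with $R_i = \partial_{z_i}$ in a geodesic ball and deduces (ii) from Eells--Sampson theory (arclength-parameterised Reeb orbits are geodesics, hence minimal, hence harmonic maps), with no correction of the chart; but some correction is genuinely necessary, because in an arbitrary such chart the $z_i$ need not be harmonic: for $\lambda = dz + x\,dx + x\,dy$ on $\mathbb{R}^3$ with the R-metric $\lambda\otimes\lambda + dx^2 + dy^2$, the chart $(x,y,z)$ is of the type of Proposition \ref{lambdacoord} and yet $\Delta z = -1$. Your divergence decomposition and Poisson correction address exactly this, and they run essentially verbatim for the \emph{given} R-metric instead of your flat one: $\mathrm{div}(R_i)=0$ because $R_i$ is Killing; the leaf-tangent part of $\mathrm{div}(\beta_i^\sharp)$ vanishes by minimality of the leaves; the metric is bundle-like because the $R_i$ are Killing, so $\bar g$ and the basic form $\bar\beta_i$ do descend to $D^{2n}$; and local solvability of $\Delta_{\bar g}\bar h_i = -\delta_{\bar g}\bar\beta_i$ requires only ellipticity, not flatness of $\bar g$. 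With that single substitution (use the given metric, drop the flat choice) your argument proves the statement in the strength the paper requires. One last nit: your opening claim that ``a function is harmonic exactly when its differential is a harmonic $1$-form'' is not an equivalence --- $df$ harmonic only forces $\Delta f$ to be locally constant, as $f = |x|^2/2$ on $\mathbb{R}^n$ shows --- but this is harmless, since you establish $\Delta z_i = 0$ outright, which yields both clauses of (ii).
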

\begin{proof}
    Item (i) is just Proposition \ref{lambdacoord}.
    Given $p \in (M, g)$, let $U$ be an open geodesic ball of radius $\epsilon$ around $p$, and choose a foliated chart $\psi: U \to \mathbb{R}^{2n}\times\mathbb{R}^q$ such that $R_i = \partial_{z_i}$ and $\psi(p) = 0$. Then the inclusion
    \begin{equation*}
        t \mapsto (0, \cdots, 0, z_j(t), 0, \cdots, 0)
    \end{equation*}
    is an arclength parameterisation of the integral of $R_j$ through the point $p$, that is, a geodesic of $g$. In particular, the function $z_j$ is a Riemannian immersion. Its image is a minimal submanifold, hence $z_j$ is harmonic (cf. \cite[Paragraph $\S 2$]{eells_harmonic_1964}), and therefore so is its derivative $d z_j$ (see also \cite[Paragraph $\S 3$]{eells_harmonic_1964}). 
    This proves (ii). 

\end{proof}

\section{Partial results on the existence of compact leaves}\label{partres}
\subsection{Anosov Contact Foliations}
We briefly recall what it means for an action of $\mathbb{R}^q$ to be Anosov.
    
\begin{definition}[\textit{Contact Elements and Anosov Actions}] 
    Consider an $C^2$ action $\phi: \mathbb{R}^q \times M \to M$ of $\mathbb{R}^q$ on a closed Riemannian manifold $(M, g)$. A point $a \in \mathbb{R}^q$ is said to be an \textbf{Anosov element of the action} if $f = \phi^a := \phi(a, \cdot)$ acts \textbf{normally hyperbolically} on $M$, meaning that there are constants $A, C > 0$ and a $ d f$-invariant splitting 
    \begin{equation*}
        TM = E_a^{ss}\oplus T\phi\oplus E_a^{uu}
    \end{equation*}
    of the tangent bundle of $M$ such that:
    \begin{itemize}
        \item[(i)] $\Vert \left( d f\rvert_{E^{ss}_a}\right)^n\Vert \leq Ce^{-An}$, for all $n > 0;$
        \item[(ii)]$\Vert \left( d f\rvert_{E^{uu}_a}\right)^n\Vert \leq Ce^{An}$, for all $n < 0.$
    \end{itemize}
    The action $\phi$ is called an \textbf{Anosov action of $\mathbb{R}^q$} if it has an Anosov element $a \in \mathbb{R}^q$.
\end{definition}
    
Anosov elements do not form a discrete set. Each connected component of the set $\mathcal{A}(\phi)$ of all Anosov elements of the action $\phi$ is an open cone in $\mathbb{R}^q$, called a \textbf{chamber} \cite{barbot_transitivity_2011}.
An open convex subcone in a chamber is called a \textbf{regular subcone of the action $\phi$}.
\par Given subsets $K \subset M$, $E \subset \mathbb{R}^q$, we say $K$ is $E$-transitive if $N$ is $\phi$-invariant and there is a point $x$ in $K$ such that 
\begin{equation*}
    \overline{\{\phi(v,x);  v \in E\}} = K,
\end{equation*}
that is, the invariant set $K$ contains a dense $E$-orbit.
    
\begin{proposition}[Spectral Decomposition \cite{barbot_transitivity_2011}]\label{sdbm}
    Let $M$ be a closed smooth manifold and $\phi$ be an Anosov action on $M$. Then there is a finite collection $\{\Lambda_i\}_{i=1}^l$ of pairwise disjoint, locally connected, closed $\phi$-invariant subsets of $M$ such that
    \begin{itemize}
        \item[(i)] For every $1 \leq i\leq l$, the union of closed orbits of $\phi$ inside $\Lambda_i$ is dense in $\Lambda_i$;
        \item[(ii)] For any regular subcone $\mathcal{C}$ the set $\Lambda_i$ is $\mathcal{C}$-transitive;
        \item[(iii)] the non-wandering set of $M$ decomposes 
        \begin{equation*}
            \mathrm{nw}(\phi) = \bigcup_{i=1}^l\Lambda_i.
        \end{equation*}
    \end{itemize}
\end{proposition}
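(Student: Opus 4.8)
The plan is to adapt Smale's spectral decomposition for Anosov diffeomorphisms to the higher-rank setting, using a single Anosov element as a clock while keeping track of the whole chamber. First I would fix a regular element $a$ inside a chamber and set $f = \phi^a$, so that $f$ acts normally hyperbolically along the orbit foliation with splitting $TM = E^{ss}_a \oplus T\phi \oplus E^{uu}_a$. The standard hyperbolic machinery then supplies, at every point $x$, local strong stable and strong unstable manifolds $W^{ss}_{loc}(x)$ and $W^{uu}_{loc}(x)$, varying continuously with $x$ and tangent to $E^{ss}_a$ and $E^{uu}_a$, together with a local product structure of uniform size on a neighbourhood of $\mathrm{nw}(\phi)$. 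These are the only geometric inputs needed for the rest.

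The first substantive step is an Anosov closing lemma for the action: if $x$ is non-wandering then for every $\varepsilon > 0$ there is a large $v \in \mathbb{R}^q$ with $d(\phi(v,x),x) < \varepsilon$, and the closing lemma upgrades this near-return into a genuine closed orbit $\varepsilon$-close to the segment $\{\phi(tv,x) : t \in [0,1]\}$. I would prove this by the usual contraction and graph-transform argument applied to the return map of $f$, correcting the discrepancy simultaneously along the stable direction $E^{ss}_a$, the unstable direction $E^{uu}_a$, and the central direction $T\phi$, which is spanned by the commuting Reeb fields and therefore lets one absorb the drift. This immediately yields density of closed orbits in $\mathrm{nw}(\phi)$, which gives item (i) once the pieces $\Lambda_i$ have been defined.

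Next I would carve out the basic pieces by Smale's homoclinic relation: declare two closed orbits equivalent when the unstable set of one meets the stable set of the other and conversely, take closures of the equivalence classes, and call the resulting sets $\{\Lambda_i\}$. Finiteness follows from compactness of $M$ together with the uniform scale of the local product boxes --- only finitely many pairwise disjoint closed invariant sets can each carry a local product structure of a fixed size. Disjointness, closedness, local connectedness and $\phi$-invariance are then formal consequences of the construction, and the decomposition $\mathrm{nw}(\phi) = \bigcup_i \Lambda_i$ of item (iii) follows because each non-wandering point is accumulated by closed orbits, all lying in a single class by the product structure.

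The main obstacle --- and the genuinely higher-rank feature --- is item (ii): that each $\Lambda_i$ is $\mathcal{C}$-transitive for every regular subcone $\mathcal{C}$ of the chamber, and not merely transitive in the single direction $a$. On a fixed chamber the strong stable and strong unstable splitting is in fact constant, since the signs of all the Lyapunov functionals stay fixed as $b$ ranges over the chamber, so every $b \in \mathcal{C}$ is hyperbolic with the same $E^{ss} \oplus E^{uu}$ and the same local product structure. The difficulty is therefore not a varying splitting but upgrading transitivity from the $\mathbb{Z}$-action generated by $a$ to the sub-semigroup generated by the whole cone $\mathcal{C}$: one must produce a single point whose orbit, constrained to directions inside $\mathcal{C}$, is dense in $\Lambda_i$. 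I would obtain this from a higher-rank specification property on $\Lambda_i$ --- concatenating orbit segments in cone directions while correcting the errors along the constant stable and unstable bundles --- and then extract the transitive point by a Baire-category argument. This uniform cone-transitivity has no counterpart in the rank-one theory and is the heart of the proof.
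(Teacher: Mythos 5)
First, a point of comparison that matters for this review: the paper contains \emph{no proof} of this proposition. It is quoted as a known result from Barbot--Maquera \cite{barbot_transitivity_2011} and used as a black box (only items (i) and (iii) are invoked, in the theorem immediately following, to get the $\mathrm{SGWC}$ for Anosov contact foliations). So your attempt can only be measured against the original source, not against anything in this paper. Your overall plan --- closing lemma, homoclinic classes, finiteness from the uniform local product structure, a Baire argument for transitivity --- is the right Smale-type shape and is in the spirit of what the source does. The difficulty is that the three steps where the higher-rank content actually lives are asserted rather than proved.

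Concretely: (a) \emph{The closing lemma.} The graph-transform argument applied to $f=\phi^a$ produces a point $y$ and a \emph{single} vector $w$ close to the return vector with $\phi(w,y)=y$. That only says the isotropy group of $y$ contains $\mathbb{Z}w$, so a priori the orbit of $y$ is a cylinder, not a compact orbit. But ``closed orbit'' in this statement --- and in the paper's application, where it must produce a leaf homeomorphic to $\mathbb{T}^q$ --- means isotropy of full rank $q$. Passing from one period to a full lattice of periods is exactly the content of the closing lemma for $\mathbb{R}^q$-actions (one route: $\mathrm{Fix}(\phi^w)$ is closed, is a union of orbits, and by normal hyperbolicity coincides locally with single orbits, forcing the orbit of $y$ to be closed); your phrase ``absorb the drift'' names this problem, it does not solve it. Incidentally, a general Anosov action has no ``Reeb fields''; the proposition assumes no contact structure, and the commuting generators by themselves do not make the drift disappear. (b) \emph{Local connectedness} of the $\Lambda_i$ is not ``a formal consequence of the construction.'' Closures of homoclinic classes in hyperbolic dynamics are typically Cantor-like --- a horseshoe already fails local connectedness --- so this property is special to Anosov systems (where the splitting exists on all of $M$) and requires its own argument; it is part of what makes the cited theorem nontrivial. (c) \emph{Item (ii).} The ``higher-rank specification property'' you invoke is the conclusion in disguise: nothing in the proposal indicates how it would be proved. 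Moreover, the mechanism it must capture is visible already on a single compact orbit $\mathbb{T}^q=\mathbb{R}^q/\Gamma$: a fixed Anosov element $a\in\mathcal{C}$ acts there by a translation, whose orbit closure can be a proper subtorus coset, so cone-transitivity cannot be obtained by upgrading the transitivity of one element; what makes it hold there is that an open cone contains balls of arbitrarily large radius and therefore meets every coset of $\Gamma$. Any correct proof of (ii) has to exploit this geometry of the cone, and your sketch never engages with it.
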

    
\begin{theorem}
    If a contact action $F: \mathbb{R}^q \rightarrow \mathrm{Diff}(M)$ is Anosov, then the underlying foliation satisfies the $\mathrm{SGWC}$.
\end{theorem}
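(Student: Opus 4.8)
The plan is to read off the existence of a compact leaf directly from the Spectral Decomposition Theorem (Proposition \ref{sdbm}), using crucially that a contact action is locally free.

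First I would isolate the following elementary observation about locally free $\mathbb{R}^q$-actions on compact manifolds: an orbit which is closed as a subset of $M$ is automatically homeomorphic to $\mathbb{T}^q$. Indeed, every leaf is an immersed copy of $\mathbb{R}^q/\Gamma$ for a discrete isotropy subgroup $\Gamma \subseteq \mathbb{R}^q$ (discreteness is exactly local freeness, which holds here because the Reeb fields $R_1, \dots, R_q$ are everywhere linearly independent). A closed subset of the compact manifold $M$ is itself compact, so a closed orbit must be compact, forcing $\Gamma$ to be a lattice of full rank $q$ and the orbit to be $\mathbb{T}^q$. Thus, for a contact foliation, the ``closed orbits'' appearing in Proposition \ref{sdbm} coincide with the leaves homeomorphic to $\mathbb{T}^q$, so exhibiting a single one of them is precisely the $\mathrm{SGWC}$.

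Next I would use compactness of $M$ to guarantee that the non-wandering set $\mathrm{nw}(F)$ is non-empty. This is a standard recurrence argument: an Anosov element $a \in \mathcal{A}(F)$ yields a diffeomorphism $\phi^a$ of the compact manifold $M$, whose non-wandering set is non-empty and is contained in the non-wandering set of the ambient action $F$. Consequently the decomposition $\mathrm{nw}(F) = \bigcup_{i=1}^l \Lambda_i$ furnished by Proposition \ref{sdbm} has at least one non-empty basic piece $\Lambda_{i_0}$ (equivalently $l \geq 1$). By part (i) of Proposition \ref{sdbm}, the closed orbits of $F$ are dense in $\Lambda_{i_0}$; in particular $\Lambda_{i_0}$ contains at least one closed orbit. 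By the first step this orbit is a leaf homeomorphic to $\mathbb{T}^q$, and the $\mathrm{SGWC}$ follows.

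The only delicate point — and the one I would treat most carefully — is the identification of the abstract closed orbits produced by the spectral decomposition with genuine full-dimensional tori $\mathbb{T}^q$. This is exactly where local freeness of the contact action is indispensable: for a general (possibly non-free) Anosov action the basic sets could carry only lower-dimensional compact orbits, and the argument would then yield merely the $\mathrm{WGWC}$. Since a contact action is locally free by construction, the identification is automatic, and no analytic input beyond Proposition \ref{sdbm} and compactness of $M$ is needed.
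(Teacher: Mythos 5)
Your proof is correct and follows the same overall skeleton as the paper's: both invoke items (i) and (iii) of the Spectral Decomposition (Proposition \ref{sdbm}) and reduce the theorem to showing $\mathrm{nw}(F) \neq \emptyset$. The difference is in how that non-emptiness is obtained. The paper exploits the contact structure itself: every map $F^a$ preserves the volume forms $dM_i = \lambda\wedge(d\lambda_i)^n$, so Poincar\'e recurrence applies and yields the stronger statement $\mathrm{nw}(F) = M$ (so, a posteriori, closed orbits are dense in all of $M$). You instead use the purely topological fact that a diffeomorphism of a compact manifold has non-empty non-wandering set (via $\omega$-limit sets), together with the inclusion $\mathrm{nw}(F^a) \subseteq \mathrm{nw}(F)$; this is more elementary and needs no contact-specific input at this step, though it yields only non-emptiness. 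One point you should make explicit: for an $\mathbb{R}^q$-action, ``non-wandering'' demands returns $F(v,U)\cap U \neq \emptyset$ with $\lvert v \rvert$ large (the paper uses $\lvert v\rvert > 1$), whereas non-wandering for the single diffeomorphism $F^a$ only provides returns at times $na$, $n \geq 1$, which could a priori be short if $\lvert a \rvert$ is small. This is harmless --- chambers are open cones, so you may rescale the Anosov element so that $\lvert a\rvert > 1$, whence $\lvert na \rvert > 1$ for all $n \geq 1$ --- but it is exactly the point the paper addresses with its remark that $\lvert ja \rvert > 1$. Finally, your first paragraph (a closed orbit of a locally free $\mathbb{R}^q$-action on a compact manifold is a full torus $\mathbb{T}^q$) is a correct and worthwhile elaboration of an identification that the paper leaves implicit when it cites items (i) and (iii).
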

\begin{proof}
    In light of items (i) and (iii) of the Spectral Decomposition \ref{sdbm}, it is sufficient to show that $\mathrm{nw}(F) \neq \emptyset$. To that end, given a point $x \in M$ and an open set $U$ containing $x$, we consider an element $a \in \mathbb{R}^q$ with  $\lvert a \rvert > 1$. The transformation $f = F^a$ is volume preserving (it preserves the volume forms $d M_i$). Hence, by Poincaré's recurrence theorem, there exists a positive natural number $j$ such that $f^j(U) \cap U \neq \emptyset$. But $f^j = F^{ja}$, where, by construction, $\lvert ja \rvert > 1$, which means exactly that $x$ is a non-wandering point. Thus $\mathrm{nw}(F) = M$, and the theorem follows.
    
\end{proof}

\subsection{Riemannian Contact Foliations}\label{secriem}

Following the work of Rukimbira, (cf. \cite{rukimbira_properties_1991,rukimbira_remarks_1993}), we define the following notion
     
\begin{definition}[\textit{Isometric contact foliation}] 
    A foliation $\mathcal{F}$ induced by a $q$-contact structure is called a \textbf{isometric contact foliation} if there is a metric $g$ for which the Reeb vector fields are Killing, that is 
    \begin{equation*}
        \mathcal{L}_{R_i}g = 0.
    \end{equation*}
    In other words, $\mathcal{F}$ is isometric if it is induced by an action of $\mathbb{R}^q$ through isometries.
\end{definition}
     
There is always a metric on the bundle $\mathcal{R}$ for which the Reeb vector fields are Killing, namely 
\begin{equation*}
    g^\tau = \sum_i \lambda_i \otimes \lambda_i,
\end{equation*}
that is, the metric defined by setting $g^\tau(R_i,R_j) = \delta_{ij},$ and extending it linearly. It satisfies $\mathcal{L}_{R_j}g^\tau = \sum_i(\mathcal{L}_{R_j}\lambda_i \otimes \lambda_i + \lambda_i \otimes \mathcal{L}_{R_j}\lambda_i) = 0.$ Therefore, asking for a foliation to be isometric amounts to assert the existence of a transverse invariant metric, that is, a metric $g^\perp$ on the $q$-contact distribution such that $\mathcal{L}_{R_j}g^\perp = 0$ for every Reeb vector field $R_j$. Since every transverse invariant metric is associated with a bundle-like metric \cite[Proposition 3.3]{molino_riemannian_2012}, it follows that every Riemannian contact foliation is, in fact, isometric:
      
 \begin{proposition}\label{Rcontacttransverseproperty}
    For a $q$-contact manifold $(M, \vec{\lambda}, \mathcal{R} \oplus \xi)$ the following are equivalent
    \begin{itemize}
        \item[(i)] The contact foliation $\mathcal{F}$ is isometric;
        \item[(ii)] There exists a metric $g$ on $M$ with respect to which the Reeb vector fields $R_i$ are Killing, and 
        \begin{equation*}
            \lambda_i(X) = g(R_i, X)
        \end{equation*}
        for any vector field on $M$.
    \end{itemize}
\end{proposition}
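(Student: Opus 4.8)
The statement is an equivalence, so the plan is to prove both implications, with the bulk of the work going into $(i)\Rightarrow(ii)$. The implication $(ii)\Rightarrow(i)$ is immediate: if such a metric $g$ exists, the Reeb vector fields are Killing by hypothesis, which is precisely the definition of $\mathcal{F}$ being isometric. The real content is the converse, where I must upgrade the \emph{existence} of some invariant metric into the existence of one that \emph{additionally} recovers the coframe via $\lambda_i(X) = g(R_i,X)$, i.e. one for which the $R_i$ are $g$-orthonormal and $g$-orthogonal to $\xi$.

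First I would set up the decomposition $TM = \mathcal{R}\oplus\xi$ and build the candidate metric blockwise. On $\mathcal{R}$, I take the tautness metric $g^\tau = \sum_i \lambda_i\otimes\lambda_i$ already introduced above, which makes the $R_i$ orthonormal and is $R_j$-invariant (the computation $\mathcal{L}_{R_j}g^\tau = 0$ is recorded in the text). On $\xi$, the hypothesis that $\mathcal{F}$ is isometric supplies, via the cited correspondence between transverse invariant metrics and bundle-like metrics (\cite[Proposition 3.3]{molino_riemannian_2012}), a metric $g^\perp$ on $\xi$ satisfying $\mathcal{L}_{R_j}g^\perp = 0$ for all $j$. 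I would then \emph{define} $g$ to be the orthogonal direct sum $g := g^\tau \oplus g^\perp$, declaring $\mathcal{R}$ and $\xi$ to be $g$-orthogonal by fiat. With this choice, for $X = X^{\mathcal{R}} + X^{\xi}$ one computes $g(R_i, X) = g^\tau(R_i, X^{\mathcal{R}}) = \lambda_i(X^{\mathcal{R}}) = \lambda_i(X)$, the last equality because $\lambda_i$ annihilates $\xi$; this verifies the metric-duality formula in $(ii)$.

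It then remains to check that the $R_i$ are Killing for this combined $g$, i.e. $\mathcal{L}_{R_j}g = 0$. Here I would argue that each Reeb flow preserves the splitting $\mathcal{R}\oplus\xi$ (because the $R_i$ preserve each $\lambda_k$, hence preserve $\xi = \cap_k\ker\lambda_k$ and the trivialised bundle $\mathcal{R} = \mathrm{Span}\{R_1,\dots,R_q\}$, using $[R_i,R_j]=0$). Since the flow of $R_j$ respects the orthogonal decomposition and preserves each summand metric separately ($\mathcal{L}_{R_j}g^\tau = 0$ and $\mathcal{L}_{R_j}g^\perp = 0$), it preserves their orthogonal sum as well, giving $\mathcal{L}_{R_j}g = 0$.

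The main obstacle I anticipate is the compatibility between the two metric blocks under the Lie derivative: one must be careful that $\mathcal{L}_{R_j}$ does not produce cross terms mixing $\mathcal{R}$ and $\xi$ that would spoil orthogonality. This reduces precisely to the flow-invariance of the splitting, so the crux is verifying that $R_j$-invariance of $\xi$ (and of $\mathcal{R}$) holds — which follows from $\mathcal{L}_{R_j}\lambda_k = 0$ and the commutativity $[R_i,R_j]=0$ established earlier. Once flow-invariance of the splitting is in hand, the rest is a routine bilinearity computation and the equivalence follows.
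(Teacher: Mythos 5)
Your proposal is correct and takes essentially the same approach as the paper: the paper's (largely implicit) proof is precisely the block construction $g = g^\tau \oplus g^\perp$ with $g^\tau = \sum_i \lambda_i \otimes \lambda_i$ on $\mathcal{R}$, the duality formula $\lambda_i = g(R_i,\cdot)$ holding by construction and the Killing property following from invariance of both summands together with flow-invariance of the splitting, exactly as you argue. One small correction: the transverse metric $g^\perp$ should be obtained directly by restricting to $\xi$ the invariant metric furnished by hypothesis (i) (legitimate since the Reeb flows preserve $\xi$, as you yourself note), not via Molino's correspondence between transverse invariant and bundle-like metrics, which the paper invokes only for the separate implication that Riemannian contact foliations are isometric.
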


Following the terminology of \cite{rukimbira_properties_1991}, we define
\begin{definition}[\textit{R-metric}]
    On a $q$-contact manifold $(M, \vec{\lambda}, \mathcal{R} \oplus \xi)$ a metric $g$ satisfying 
    \begin{itemize}
        \item[(i)] $\mathcal{L}_{R_i}g = 0$; 
        \item[(ii)]$g(R_i, \cdot) = \lambda_i$,
    \end{itemize}
    is called an R-metric of the foliation $\mathcal{F}$.
\end{definition} 
    
\begin{remark} 
    Note that Proposition \ref{Rcontacttransverseproperty} can be restated as "a contact foliation is isometric if and only if it admits an R-metric". \textit{From now on, whenever we say the triple $(M, \mathcal{F}, g)$ is an isometric contact foliation, it is implicitly assumed that $g$ is an R-metric.} 
\end{remark}

\begin{example}[The canonical contact structure of $\mathbb{R}^{2n+q}$]
    Consider on $\mathbb{R}^{2n+q}$ the structure of Example \ref{canexm}
    Let $g^\perp$ be the metric on $\xi$ obtained as the restriction of the canonical metric of $\mathbb{R}^{2n+q}$. 
    On $\mathcal{R}$, let $g^\tau = \sum_i \lambda_i \otimes \lambda_i,$ as usual. Then the metric $g^\perp$ is invariant under the Reeb vector fields, as the $\partial_{z_i}$ are Killing vector fields of the canonical metric on $\mathbb{R}^{2n+q}$, and therefore $g:= g^\tau \oplus g^\perp$ satisfies 
    \begin{equation*}
        \mathcal{L}_{\partial_{z_i}}g = 0, 
    \end{equation*}
    that is, $g$ is an R-metric.
    
\end{example}

\begin{example}[Toric extensions]
    Let $(M, \vec{\lambda}, \mathcal{R} \oplus \xi)$ be a $q$-contact manifold admitting an R-metric $g = g^\tau \oplus g^\perp$. 
    Suppose $\pi: E \to M$ is a principal $\mathbb{T}^l$-bundle equipped with a flat connection $TE = H \oplus V$. 
    As it is shown in detail in Example \ref{flatbundleexample}, $E$ admits a $(q+l)$-contact structure determined by $1$-forms 
    \begin{align*} 
        \eta_i =\pi^\ast\lambda_i, &~\text{ for } 1 \leq i \leq q; \\
        \eta_{q+i} = \alpha_i + \pi^\ast\lambda_{j_i}, &~\text{ for } 1 \leq i \leq l, 
    \end{align*}
    where the $\alpha_i$ are $1$-forms that are identically zero on the horizontal bundle and on the vertical bundle are given by 
    \begin{equation*}
    \alpha_i(X_j) = \delta_{ij}, 
    \end{equation*}
    with the $X_i$ being the fields generating the toric action on the fibres.
    The identification of $H$ with $TM$ via $\pi$ induces a splitting $H = \overline{\mathcal{R}} \oplus \widetilde{\xi}$, where $\mathcal{R} \approx \overline{\mathcal{R}}$ and $\xi \approx \widetilde\xi$, both diffeomorphisms being a restriction of $ d\pi$. 
    Thus $TE = \widetilde{\mathcal{R}} \oplus \xi,$  where $\widetilde{\mathcal{R}} = V \oplus \overline{\mathcal{R}}$ is the bundle tangent to the $(q+l)$-dimensional contact foliation $\widetilde{\mathcal{F}}$ on $E$.
    The Reeb vector fields are 
    \begin{equation}\label{liederivativesmetricextension}
        \begin{split}
            \widetilde{R}_i = R_i, &~\text{ for } 1 \leq i \leq q; \\
            \widetilde{R}_{q+i} = \frac{1}{2}(X_i + R_{j_i}), &~\text{ for } 1 \leq i \leq l. 
        \end{split}
    \end{equation}
    We want to show that $(E, \vec{\eta}, TE = \widetilde{\mathcal{R}} \oplus \widetilde{\xi})$ defines an isometric contact foliation for a suitable choice of metric $\widetilde{g}$ on $E$. For this, let $g_0$ be a metric on the tangent bundle $\widetilde{\mathcal{R}}$ defined by
    \begin{equation*} 
        g_0 := \sum_{i=1}^{q+l}\eta_i\otimes\eta_i. 
    \end{equation*}
    This makes $\{\widetilde{R}_i\}$ into an orthonormal basis for $\overline{\mathcal{R}}$. Finally, we let 
    \begin{equation*}
        \widetilde{g} := g_0 \oplus g^\perp
    \end{equation*} 
    be a Riemannian metric on $TE = \widetilde{\mathcal{R}} \oplus \widetilde{\xi}$.
    
    We claim that $\widetilde{g}$ is an R-metric for $E$. First, it is clear from the definition of $\widetilde{g}$ that
    \begin{equation}\label{rmetriccondition1}
    \widetilde{g}(X, \widetilde{R}_i) = g_0(X, \widetilde{R}_i) = \eta_i(X).
    \end{equation}
    It is straightforward that 
    \begin{equation*}
        \left(\mathcal{L}_{\widetilde{R}_i}g_0\right)(\widetilde{R}_j,\widetilde{R}_l) = \mathcal{L}_{\widetilde{R}_i}(\iota_{\widetilde{R}_l}\iota_{\widetilde{R}_j}g_0)-g_0([\widetilde{R}_i,\widetilde{R}_j], \widetilde{R}_l)-g_0(\widetilde{R}_j, [\widetilde{R}_i,\widetilde{R}_l]) = \mathcal{L}_{\widetilde{R}_i}\delta_{ij} = 0,
    \end{equation*}
    hence $\mathcal{L}_{\widetilde{R}_i}g_0 \equiv 0$. Now, from the characterisation of the Reeb fields in \ref{liederivativesmetricextension}, we have
    \begin{equation}\label{liederivativeliftedmetric}
        \begin{split}
            \mathcal{L}_{\widetilde{R_i}}\widetilde{g} = \mathcal{L}_{R_i}\widetilde{g} = \mathcal{L}_{R_i}g_0 + \mathcal{L}_{R_i}g^\perp = 0, &~\text{ for } 1 \leq i \leq q; \\
            \mathcal{L}_{\widetilde{R}_{q+i}}\widetilde{g} = \mathcal{L}_{\widetilde{R}_{q+i}}g^\perp = \frac{1}{2}\mathcal{L}_{X_i}g^\perp + \frac{1}{2}\mathcal{L}_{R_i}g^\perp = \frac{1}{2}\mathcal{L}_{X_i}g^\perp, &~\text{ for } 1 \leq i \leq l.
        \end{split}
    \end{equation}
    Recall, from Example \ref{flatbundleexample}, that for every generator field $X_1$ it holds that
    \begin{equation*}
        [X_i, Z] \text{ is horizontal whenever } Z \text{ is horizontal}.
    \end{equation*}
    Moreover, every vector field $Z$ tangent to $\widetilde{\xi}$ is foliate. Together, these two conditions imply
    \begin{equation*}
        [X_i, Z] \in \overline{\mathcal{R}} \text{ whenever }Z \in \Gamma(\widetilde{\xi}).
    \end{equation*} 
        In particular, for $Y,Z$ tangent to $\widetilde{\xi}$ we have
    \begin{equation*}
    \begin{split}
        (\mathcal{L}_{X_i}g^\perp)(Y,Z) &= \mathcal{L}_{X_i}(\iota_Z\iota_Yg^\perp) - g^\perp([X_i, Y], Z) - g^\perp(Y, [X_i, Z]) \\ 
        &= \mathcal{L}_{X_i}(\iota_Z\iota_Yg^\perp) = 0,
    \end{split}
    \end{equation*}
    as the function $\iota_Z\iota_Yg^\perp$ can be thought as a lift to $E$ of a leaf-wise function of $(M, \mathcal{F})$ to a leaf-wise function of $(E, \widetilde{\mathcal{F}})$. The leaves of $\widetilde{\mathcal{F}}$ are all of the form 
    \begin{equation*}
        \widetilde{\mathcal{F}}(x) \approx \mathbb{T}^l\times \mathcal{F}(\pi(x)),
    \end{equation*}
    with the $X_i$ being vectors on the $\mathbb{T}^l$ directions, hence $\mathcal{L}_{X_i}(\iota_Z\iota_Yg^\perp) = 0$. Thus, Equation \ref{liederivativeliftedmetric} reduces to 
    \begin{equation*}
        \mathcal{L}_{\widetilde{R}_i}\widetilde{g} \equiv 0,    
    \end{equation*}
    which together with \ref{rmetriccondition1} means that $\widetilde{g}$ is an R-metric for $(E, \vec{\eta}, TE = \widetilde{\mathcal{R}} \oplus \widetilde{\xi})$, and $\widetilde{\mathcal{F}}$ is an isometric contact foliation.
    
\end{example}
    
In particular, by taking products of tori with manifolds supporting R-flows, we can produce isometric contact foliations of any dimension $q \geq 2$.
We remark that the resulting manifold is closed if the torus is multiplied by a closed manifold. 
Examples of R-flows include regular and almost regular contact manifolds (the canonical contact structures on odd spheres $S^{2n+1}$ are specific examples of regular contact manifolds, cf. \cite{blair_riemannian_2010, GeigesIntroductionContactTopology2008} for more details and examples), and also every K-contact manifold. 
This last class includes, in particular, compact contact hypersurfaces \cite{banyaga_characteristics_1993}, Kähler manifolds of constant positive holomorphic sectional curvature, and Brieskorn manifolds \cite{kon_structures_1985}. 

\subsubsection{Geometry of manifolds supporting isometric contact foliations}\label{geotopisocon}

The existence of an isometric contact foliation imposes several obstructions to the geometry and topology of the ambient manifold besides orientability. These are primarily consequences of the following theorem.
    
\begin{theorem}\label{harmonicfieldsarebasic} 
    Let $(M, \mathcal{F}, g)$ be an isometric contact foliation on a closed manifold $M$. If a vector field is harmonic with respect to $g$ (that is, if the dual $1$ form $\mu := g(X, \cdot)$ is harmonic), then $X$ is tangent to the $q$-contact distribution $\xi$. In particular, every harmonic $1$-form on $M$ is basic.
\end{theorem}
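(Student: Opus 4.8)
The plan is to split the statement into a constancy step and a vanishing step, after first noting that the two conclusions coincide. Since $g$ is an $R$-metric we have $g(R_i,R_j)=\lambda_i(R_j)=\delta_{ij}$, so $\{R_i\}$ is $g$-orthonormal and $\xi=\mathcal{R}^\perp$; as $\mu$ is closed, being harmonic, saying that $\mu$ is basic means exactly $\mu(R_i)=g(X,R_i)=0$ for all $i$, i.e.\ $X\in\xi$. Thus it suffices to prove $\iota_{R_i}\mu=0$ for every $i$. For the constancy step I would use that $R_i$ is Killing: its flow commutes with the Hodge Laplacian, so $\mathcal{L}_{R_i}\mu$ is again harmonic, while $d\mu=0$ and Cartan's formula give $\mathcal{L}_{R_i}\mu=d(\iota_{R_i}\mu)$, which is exact. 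A form that is both exact and harmonic vanishes on a closed manifold, so $c_i:=\iota_{R_i}\mu$ is constant. Writing $X=\sum_i c_iR_i+X^\xi$ with $X^\xi\in\xi$, the goal reduces to $c_i=0$, equivalently to the vanishing of the Killing field $Y:=\sum_i c_iR_i$ with constant coefficients.

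The model for the vanishing is the contact case $q=1$, where it is clean. There $\mu=c\,\lambda+\nu$ with $\nu$ horizontal, and pointwise $\mu\wedge(d\lambda)^n=c\,\lambda\wedge(d\lambda)^n$, because $\nu\wedge(d\lambda)^n$ is a purely horizontal $(2n+1)$-form on the $2n$-dimensional bundle $\xi$ and hence zero, while the constant $c$ pulls out of the integral. Since $(d\lambda)^n=d\bigl(\lambda\wedge(d\lambda)^{n-1}\bigr)$ is exact and $\mu$ is closed, $\mu\wedge(d\lambda)^n$ is exact and integrates to zero by Stokes; as $\lambda\wedge(d\lambda)^n$ is a volume form, $c=0$. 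Note that the metric enters only through the constancy of $c$; the actual vanishing is purely symplectic.

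The main obstacle is carrying this out for $q\ge 2$. To isolate a single $c_i$ one must supply the missing $q-1$ leafwise directions by wedging with $\Lambda_i:=\lambda_1\wedge\cdots\widehat{\lambda_i}\cdots\wedge\lambda_q$, and indeed $\mu\wedge\Lambda_i\wedge(d\lambda_i)^n=\pm\,c_i\,\lambda\wedge(d\lambda_i)^n$ does pick out the right constant (the horizontal part and the $c_j$ with $j\neq i$ drop out exactly as above, using $\lambda_k\wedge\Lambda_i=0$ for $k\neq i$). The difficulty is that $\Lambda_i$ is \emph{not} closed — each $\lambda_j$ has non-degenerate differential — so $\mu\wedge\Lambda_i\wedge(d\lambda_i)^n$ is not exact, and integrating by parts leaves cross-terms proportional to the remaining constants $c_j$. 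I expect this to be the genuine crux: the naive Stokes argument yields only a homogeneous linear system $Lc=0$ rather than $c=0$.

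To close the gap I would exploit the symmetry special to the isometric setting. By Myers--Steenrod the closure $\bar G:=\overline{\operatorname{Im}(\mathbb{R}^q\to\mathrm{Iso}(M))}$ is a torus whose Lie algebra consists of Killing fields and contains $\mathcal{R}$; every harmonic $\mu$ is $\bar G$-invariant, and the $\bar G$-orbits are the (flat, taut) leaf closures, on which $\mu$ restricts to a translation-invariant form whose periods over closed Reeb orbits are precisely the $c_i$. This reframes the vanishing as the homological pairing $c_i=\langle[\mu],[\gamma_i]\rangle$, reducing the problem to showing that the closed orbits $\gamma_i$ are rationally null-homologous. A cleaner route, rigorous at least in the uniform case, is to invoke Theorem~\ref{uniformStructuresFibrations} together with Proposition~\ref{transversalsarecontact}: a complete transversal $N$ to a characteristic foliation is a closed contact manifold, and the one-dimensional argument applies to $\lambda_i\rvert_N$, forcing $c_i=0$ through the correspondence of closed orbits of $R_i$ with those on $N$ in Remark~\ref{transversalsinduceReeborbits}. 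I would therefore first settle the uniform case via transversals and then bootstrap to the general statement using the compact torus action and the tautness of Corollary~\ref{flowlinesaregeodesics} to control the leaf closures, with the homological reduction above as the mechanism that globalises the contact-case vanishing.
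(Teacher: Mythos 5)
Your constancy step is correct and is exactly the paper's (the paper cites a result on Killing fields and harmonic forms for it), and your diagnosis of the crux is also accurate: wedging with $\Lambda_i=\lambda_1\wedge\cdots\widehat{\lambda_i}\cdots\wedge\lambda_q$ and integrating by parts leaves cross-terms $\int_M\lambda\wedge d\lambda_j\wedge(d\lambda_i)^{n-1}$, so Stokes alone yields only a homogeneous linear system in the $c_j$. The gap is that neither of your two proposed ways of closing that system works. The homological route is circular: it presupposes that each $R_i$ has a closed orbit $\gamma_i$, which is not known at this stage (even the paper's later Theorem \ref{closedorbitsriemanniancase} produces closed \emph{leaves}, i.e.\ tori, inside which an individual Reeb field may still have dense, non-closed orbits), and it then asks to show that $[\gamma_i]$ is rationally null-homologous, which is essentially a restatement of the theorem itself — the vanishing of $\langle[\mu],[\gamma_i]\rangle$ for all harmonic $\mu$ is precisely what ``harmonic $\Rightarrow$ basic'' asserts along $\gamma_i$, so nothing has been reduced. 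The transversal route fails for a concrete reason visible in Remark \ref{transversalsinduceReeborbits}: on a closed transversal $N$ of $\mathcal{F}_i$, the Reeb field of $\lambda_i\rvert_N$ is not $R_i$ but $\Psi R_i=R_i+X_i$ with $X_i=\sum_{j\neq i}a_jR_j$ tangent to $\mathcal{F}_i$ and the $a_j$ non-constant. Hence $\mu(\Psi R_i)=c_i+\sum_{j\neq i}a_jc_j$ is not constant on $N$ (nor is $\mu\rvert_N$ harmonic for the induced metric), so the one-dimensional argument gives only $\int_N\bigl(c_i+\sum_{j\neq i}a_jc_j\bigr)\,\lambda_i\wedge(d\lambda_i)^n=0$ — again a linear relation, not $c_i=0$. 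Moreover this route would at best cover the uniform case, and the proposed ``bootstrap'' to general isometric foliations is not an argument.

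What the paper does instead, and what is missing from your proposal, is a reduction to the case where only one coefficient $c_l$ is non-zero (its Step I): working in the harmonic foliated coordinates of Proposition \ref{lambdacoordharmonic} (tautness provides charts in which the functions $z_i$ and the forms $dz_i$ are harmonic) and using closedness of $\mu$, the paper shows that $\mu$ can be traded for a harmonic form $\mu_0$ with $\mu_0(R_j)=c_l\delta_{lj}$. Once a single index is isolated, your own $q=1$ mechanism goes through verbatim as the paper's Step II: $\eta:=\lambda_l-\tfrac{1}{c_l}\mu_0$ is basic with $d\eta=d\lambda_l$, whence $\lambda\wedge(d\lambda_l)^n=\pm\, d\bigl(\lambda\wedge\eta\wedge(d\lambda_l)^{n-1}\bigr)$ (the remaining term $d\lambda\wedge\eta\wedge(d\lambda_l)^{n-1}$ vanishes pointwise because there are too few leafwise factors to absorb all $q$ Reeb directions), making a volume form exact on a closed manifold — a contradiction. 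So your proposal contains the right endgame but lacks the decisive single-index reduction, and the two completions offered in its place do not close the linear system.
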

\begin{proof}
    The coordinate function of $X$ in the direction $R_i$ is 
    \begin{equation*}
        c_i := \lambda_i(X) = g(R_i, X) = \mu(R_i). 
    \end{equation*}
    We wish to show that $c_i \equiv 0$ for every $i$. We divide the proof into two steps: first, we show that if we assume that $c_i \neq 0$ for some value of $i$, then there is no loss of generality in assuming $c_i \neq 0$ for a single value of $i$. Then we show that the latter case cannot happen. 
    
    \textit{Step I: we may assume $\mu(R_j) = c_l\delta_{lj}$.} Since $X$ is harmonic and $R_i$ is a Killing field, it follows that $c_i = \mu(R_i)$ must be a constant (cf. \cite[Proposition 5.13]{PoorWalterA.DifferentialGeometricStructures}). 
    Let us separate the components of $X$ in the $\mathcal{R}$ directions and write $X = \sum_ic_iR_i + \widetilde{X}$, with $\widetilde{X}$ tangent to the  $q$-contact distribution. Taking duals concerning $g$, let us write the harmonic form $\mu$ as a sum 
    \begin{equation}\label{mudecomposed}
        \mu = \sum_{i=1}^qc_i\lambda_i + \widetilde{\mu}.
    \end{equation}
    In particular, $\widetilde{\mu}(R_i) = 0$ for every Reeb vector field by construction.
    As the manifold $M$ is closed, harmonicity implies that $\mu$ is closed. Hence
    \begin{equation}\label{diffmu}
        \sum_{i=1}^qc_i d\lambda_i = -  d\widetilde{\mu}.
    \end{equation}
    Now, around any point $p \in M$, we can choose a chart $(U;x_1, \cdots, x_{2n},z_1, \cdots, z_q)$ as in Proposition \ref{lambdacoordharmonic}. 
    We expand both $\widetilde{\mu}$ and the forms $\lambda_i$ in these coordinates so that $\mu$ is written as
    \begin{equation*} 
        \mu = \sum_{i=1}^q\left(c_i d z_i + \sum_{j=1}^{2n}c_if^i_j(x) d x_j\right) + \underbrace{\sum_{j=1}^{2n}\alpha_j(x,z) d x_j + \sum_{s=1}^q\beta_s(x,z) d z_s}_{\widetilde{\mu}}. 
    \end{equation*}
    By construction, $\widetilde{\mu}(R_i) = 0$ for every $i$. Since in these coordinates $R_i = \partial_{z_i}$, the functions $\beta_s$ on the expansion above are constants equal to zero. Hence
    \begin{equation}\label{mudecomposedcoord}
        \mu = \sum_{i=1}^q\left(c_i d z_i + \sum_{j=1}^{2n}c_if^i_j(x) d x_j\right) + \sum_{j=1}^{2n}\alpha_j(x,z) d x_j,
    \end{equation}
    and, in these coordinates, the equality in \ref{diffmu} becomes 
    \begin{equation*}
    \begin{split}
        \sum_{j,l=1}^{2n}\left( \sum_i c_i\frac{\partial}{\partial x_l}f^i_j(x)  d x_l \wedge  d x_j \right) &= - \sum_{j,l=1}^{2n}\frac{\partial}{\partial x_l}\alpha_j(x,z) d x_l \wedge  d x_j \\ &- \sum_{j=1}^{2n}\sum_{l=1}^q\frac{\partial}{\partial z_l}\alpha_j(x,z) d z_l \wedge  d x_j.
    \end{split}
    \end{equation*} 
    Evaluating both sides of the above equation at the Reeb vector fields, we conclude that the $\alpha_j$ are functions of the coordinate $x$, satisfying
    \begin{equation*}
        - \frac{\partial}{\partial x_l}\alpha_j(x) =  \frac{\partial}{\partial x_l}\sum_i c_if^i_j(x)~\text{ for every } 1\leq l \leq 2n. 
    \end{equation*}
    This means there are constants $K_j$ such that 
    \begin{equation*}
        \alpha_j(x) = - \sum_ic_if^i_j(x) + K_j.
    \end{equation*}
    Applying this to Equation \ref{mudecomposedcoord} gives 
    \begin{equation*}
        \mu = \sum_{i=1}^qc_i d z_i + \sum_{j=1}^{2n}K_j d x_j.
    \end{equation*}
    Choose an index $l$ such that $c_l \neq 0$, and write 
    \begin{equation*} 
        \mu - \sum_{i \neq l}c_i d z_i = c_l\lambda_l + \sum_{j=1}^{2n}K_j d x_j.
    \end{equation*}
    The RHS is a harmonic form, for each $ d z_i$ is harmonic (cf. Proposition \ref{lambdacoordharmonic}). Thus the $1$-form 
    \begin{equation*}
        \mu_0 := c_l\lambda_l + \sum_{j=1}^{2n}K_j d x_j
    \end{equation*}
    is a harmonic form on $M$ satisfying
    \begin{equation*}
        \mu_0(R_j) = c_l\delta_{jl},
    \end{equation*}
    as we wanted. 

    \textit{Step II: given a constant $c_l \neq 0$, there can be no harmonic form $\mu$ with $\mu(R_j) = c_l\delta_{lj}$.} Let us assume the existence of such a form. To simplify things a bit, let the non-zero coefficient be $c_1$. Let $X = c_1R_1 + \widetilde{X}$ be the dual vector field of $\mu$, where $\widetilde{X}$ is tangent to $\xi$. Then 
    \begin{equation*}
        \eta := \lambda_1 - \frac{1}{c_1}\mu
    \end{equation*}
    is a basic $1$-form on $M$ such that $ d\eta =  d\lambda_1$. This follows because, as $\mu(R_j) = c_1\delta_{1j}$, we have 
    \begin{equation*}
        \eta(R_i) = 0 \text{ for every }i.
    \end{equation*}
    Moreover, since $\mu$ is harmonic, $\mathcal{L}_Y\mu = 0$ for every Killing field $Y$ (cf \cite[Proposition 5.13]{PoorWalterA.DifferentialGeometricStructures}). In particular,
    \begin{equation*}
        \mathcal{L}_{R_i}\eta = \mathcal{L}_{R_i}\lambda_1 + \mathcal{L}_{R_i}\mu + \left(R_i\frac{1}{c_1}\right)\mu = 0 \text{ for every }i,
    \end{equation*}
    so that $\eta$ is indeed basic. Since $M$ is a closed manifold, it follows that $\mu$ is closed, hence $ d\eta =  d\lambda_1$. Consequently, we have
    \begin{equation}\label{eq01obstcur}
        d(\eta\wedge( d\lambda_1)^{n-1}) =  d\eta\wedge( d\lambda_1)^{n-1} = ( d\lambda_1)^{n}.
    \end{equation}
    Using equation \ref{eq01obstcur} above we obtain a volume form on $M$ written as
    \begin{equation*}
    \begin{split}
       dM_1 &= \lambda\wedge d(\eta\wedge( d\lambda_1)^{n-1})\\
        &= (-1)^q d\lambda\wedge\eta\wedge( d\lambda_1)^{n-1}+ (-1)^{q+1} d(\lambda\wedge\eta\wedge( d\lambda_1)^{n-1}).
    \end{split}
    \end{equation*}
    Since $d\lambda\wedge\eta\wedge( d\lambda_1)^{n-1}$ is a basic form, we have $ d\lambda\wedge\eta\wedge( d\lambda_1)^{n-1} \equiv 0$. Hence 
    \begin{equation*}
        \lambda\wedge( d\lambda_1)^{n} = (-1)^{q+1} d(\lambda\wedge\eta\wedge( d\lambda_1)^{n-1}),
    \end{equation*}
    which can not be, for a closed manifold admits no exact volume form.
    
    We conclude that there can be no harmonic vector field satisfying $X = c_1R_1 + \widetilde{X}$, with $\widetilde{X}$ tangent to $\xi$, which implies, in turn, in light of step (I), that any harmonic vector field with respect to an R-metric for a contact foliation $\mathcal{F}$ must be transverse to that foliation, as we wanted. In particular, since each $R_i$ is Killing, any harmonic $1$-form must satisfy
    \begin{equation*}
    \begin{cases}
        \iota_{R_i}\mu = 0 \text{ for every } 1 \leq i \leq q; \\
        \mathcal{L}_{R_i}\mu = 0 \text{ for every } 1 \leq i \leq q,
    \end{cases}
    \end{equation*}
    hence every harmonic $1$-form is basic.
    
\end{proof}
    
For any foliation $\mathcal{F}$ there is an injection $H^1_B(\mathcal{F}) \hookrightarrow H^1(M; \mathbb{R})$ of the first basic cohomology group into the first real De Rham cohomology (cf. \cite[Proposition 4.1]{TondeurGeometryFoliations1997}. Using our last result, we can show that for Riemannian contact foliations this is an isomorphism.
    
\begin{proposition}
    If $(M, \mathcal{F}, g)$ is a closed manifold supporting an isometric contact foliation, there is an isomorphism
    \begin{equation*}
        H^1(M; \mathbb{R}) \approx H^1_B(\mathcal{F})
    \end{equation*}
    between the first De Rham cohomology group of $M$ and the first basic cohomology group of the foliation $\mathcal{F}$.
\end{proposition}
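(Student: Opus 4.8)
The plan is to exploit Hodge theory together with Theorem \ref{harmonicfieldsarebasic}. The natural comparison map $\iota^\ast \colon H^1_B(\mathcal{F}) \to H^1(M;\mathbb{R})$, induced by the inclusion of the basic complex into the full De Rham complex, is already known to be injective (the cited Proposition 4.1 of Tondeur), so it suffices to prove that $\iota^\ast$ is surjective.

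First I would fix the R-metric $g$ on the closed manifold $M$ and invoke the Hodge decomposition theorem: every class in $H^1(M;\mathbb{R})$ admits a unique $g$-harmonic representative $\mu$. By Theorem \ref{harmonicfieldsarebasic}, this harmonic $1$-form $\mu$ is basic, that is, $\iota_{R_i}\mu = 0$ and $\mathcal{L}_{R_i}\mu = 0$ for every Reeb field $R_i$. Being harmonic on a closed manifold, $\mu$ is in particular closed; and since the exterior derivative preserves basic forms (so that $d$ restricts to the basic differential $d_B$ on the basic subcomplex), $\mu$ is a $d_B$-cocycle and therefore defines a class $[\mu]_B \in H^1_B(\mathcal{F})$. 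By construction $\iota^\ast[\mu]_B$ is precisely the De Rham class of $\mu$, namely the class we started with. Hence $\iota^\ast$ is onto, and combined with the injectivity recalled above this yields the asserted isomorphism $H^1(M;\mathbb{R}) \approx H^1_B(\mathcal{F})$.

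The argument is essentially formal once Theorem \ref{harmonicfieldsarebasic} is in hand, so I do not anticipate a serious obstacle; the one point demanding care is the compatibility of the two notions of exactness implicit in the comparison map. Concretely, one must check that a $g$-harmonic basic form genuinely lands in basic cohomology and that $\iota^\ast$ really sends its basic class to its De Rham class. This is guaranteed because $d$ and $d_B$ coincide on basic forms, so a De Rham-closed basic form is automatically $d_B$-closed, and the inclusion of cochain complexes is a genuine chain map. Thus the whole content is carried by the rigidity result of the preceding theorem, which forces every harmonic representative to be transverse to $\mathcal{F}$; Hodge theory then furnishes a basic representative for each cohomology class, and surjectivity follows at once.
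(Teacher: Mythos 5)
Your proof is correct and takes essentially the same route as the paper: both arguments combine the injectivity of the comparison map $\iota^\ast \colon H^1_B(\mathcal{F}) \to H^1(M;\mathbb{R})$ from Tondeur with Hodge theory and Theorem \ref{harmonicfieldsarebasic}, using the unique harmonic representative of each De Rham class as a basic representative. The only difference is one of packaging -- you phrase the conclusion as surjectivity of $\iota^\ast$, while the paper phrases it as an injection $H^1(M;\mathbb{R}) \hookrightarrow H^1_B(\mathcal{F})$ in the opposite direction and then concludes; your version is, if anything, slightly more explicit about why the two injections yield the isomorphism.
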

\begin{proof}
    Hodge's Theorem states that every class in $H^1(M;\mathbb{R})$ has a unique harmonic representative, and since every harmonic $1$-form is basic, this gives an injection from $H^1(M; \mathbb{R})$ into $H^1_B(\mathcal{F})$, which implies the desired isomorphism.
    
\end{proof}
    
We apply this result in order to exclude the existence of isometric contact foliations on a number of closed manifolds:
    
\begin{proposition}\label{cohomologyobst} 
    Let $M$ be a closed orientable manifold of dimension, and suppose that the dimension of $H^1(M; \mathbb{R})$ as a real vector space is the same as the manifold dimension of $M$. Then $M$ does not support isometric contact foliations.
\end{proposition}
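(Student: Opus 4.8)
The plan is to argue by contradiction, combining Hodge theory with Theorem~\ref{harmonicfieldsarebasic}. Suppose $M$, with $\dim M = 2n+q =: m$ and $\dim_{\mathbb{R}} H^1(M;\mathbb{R}) = m$, carried an isometric contact foliation $\mathcal{F}$ with R-metric $g$. Since $M$ is closed and orientable, Hodge's theorem supplies $m$ harmonic $1$-forms $\mu_1,\dots,\mu_m$ whose classes form a basis of $H^1(M;\mathbb{R})$, and these are the objects I would work with.

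First I would feed these forms into Theorem~\ref{harmonicfieldsarebasic}: each $\mu_k$ is harmonic, hence basic, so $\iota_{R_i}\mu_k = 0$ for every $i$. In other words, at each point $x\in M$ the covectors $\mu_1|_x,\dots,\mu_m|_x$ all lie in the annihilator $\mathrm{Ann}(\mathcal{R}_x)\subset T_x^\ast M$ of the $q$-dimensional Reeb distribution, a subspace of dimension $(2n+q)-q = 2n$. Because $m = 2n+q > 2n$, the $m$-form $\mu_1\wedge\cdots\wedge\mu_m$ is pointwise a wedge of $m$ covectors drawn from a $2n$-dimensional space, and therefore vanishes identically on $M$. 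Passing to cohomology, this shows that the top cup product of our basis is trivial:
\begin{equation*}
    [\mu_1]\cup\cdots\cup[\mu_m] = 0 \in H^m(M;\mathbb{R}).
\end{equation*}

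The contradiction must then come from the hypothesis $\dim H^1(M;\mathbb{R}) = \dim M$, which I expect to force exactly the opposite conclusion: that the top cup product of a basis of $H^1$ is nonzero. The natural device is the Albanese (or generalised Tischler) map $\Phi\colon M \to \mathbb{T}^m$ obtained by integrating the $\mu_k$; since the $\mu_k$ form a basis of $H^1$, the map $\Phi$ induces an isomorphism on first cohomology and pulls the fundamental class of $\mathbb{T}^m$ back to a nonzero multiple of $[\mu_1]\cup\cdots\cup[\mu_m]$, so that $\int_M \mu_1\wedge\cdots\wedge\mu_m$ is a nonzero multiple of $\deg\Phi$. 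The plan is to conclude $\deg\Phi \neq 0$, which flatly contradicts the identical vanishing of $\mu_1\wedge\cdots\wedge\mu_m$ established above. In the model case $M = \mathbb{T}^m$ this is immediate: there the harmonic forms are a global coframe, so they cannot all annihilate the nonzero distribution $\mathcal{R}$, and no cup-product argument is needed. I expect the main obstacle to lie precisely in the general case, namely in promoting the equality $\dim H^1(M) = \dim M$ to the nonvanishing of the top cup product of $H^1$ (equivalently, to $\deg\Phi = \pm 1$). This is a purely topological statement about closed orientable manifolds whose first Betti number equals their dimension, and it is where the genuine content of the proof resides.
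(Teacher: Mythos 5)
Your first half is exactly the paper's argument: take harmonic representatives $\eta_1,\dots,\eta_{2n+q}$ of a basis of $H^1(M;\mathbb{R})$, invoke Theorem \ref{harmonicfieldsarebasic} to conclude each is basic, and observe that $2n+q$ covectors lying pointwise in the $2n$-dimensional annihilator of $\mathcal{R}$ must wedge to zero, so the top cup product of the basis vanishes. Where you diverge is only in honesty about the second half. The paper disposes of it in one sentence -- ``since the classes $[\eta_i]$ are linearly independent, the product $\eta_1\wedge\cdots\wedge\eta_{2n+q}$ is non-vanishing'' -- then represents this purportedly nonzero top class by a volume form $\omega$, writes $\eta_1\wedge\cdots\wedge\eta_{2n+q}=\omega-d\eta$, and reaches the contradiction that $\omega=d\eta$ is exact. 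In other words, the step you flag as ``where the genuine content of the proof resides'' is precisely the step the paper asserts without justification; your reconstruction is the paper's proof with its gap made explicit.

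You are right to be suspicious of that step, because the purely topological statement you hope for is false. For $m\geq 3$, the connected sum $M=\#^m\left(S^1\times S^{m-1}\right)$ (e.g.\ $\#^3\left(S^1\times S^2\right)$) is a closed orientable $m$-manifold with $\dim H^1(M;\mathbb{R})=m$, yet every cup product of two degree-one classes vanishes: distinct generators are Poincar\'e dual to disjoint embedded spheres, and squares of degree-one classes vanish over $\mathbb{R}$. Hence the top cup product of any basis of $H^1$ is zero and the Albanese map has degree $0$ (nonvanishing of the top product is equivalent to $\deg\Phi\neq 0$, not to $\deg\Phi=\pm1$, but that is beside the point). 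So no argument from ``$b_1=\dim M$'' alone can supply the required nonvanishing; a complete proof would have to use the existence of the foliation again at this stage, or else restrict the proposition to manifolds whose $H^1$ has nontrivial top product -- such as the torus $\mathbb{T}^{2n+q}$, which is the case the subsequent corollary actually needs. The gap you identified is therefore genuine and cannot be closed along the route you propose; but, to be fair to you, it is a defect of the paper's own proof as much as of your proposal.
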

\begin{proof}
    Given $q \geq 1$, write $\dim M = 2n+q$ and suppose, by contradiction, that $\mathcal{F}$ is an isometric contact foliation on $M$. Let $\eta_1, \cdots, \eta_{2n+q}$ be harmonic forms whose cohomology classes form a basis for the $\mathbb{R}$-vector space $H^1(M; \mathbb{R})$. On the one hand, since the classes $[\eta_i]$ are linearly independent, the product $\eta_1\wedge\cdots\wedge\eta_{2n+q}$ is non-vanishing. Hence its class in the top cohomology group $H^{2n+q}(M; \mathbb{R})$ can also be represented as a volume form $\omega$ on $M$. This means, in particular, that we can find a $(2n+q-1)$-form $\eta$ such that 
    \begin{equation*}
        \eta_1\wedge\cdots\wedge\eta_{2n+q} = \omega -  d\eta. 
    \end{equation*}
    On the other hand, since $\mathcal{F}$ is an isometric contact foliation, it follows from Theorem \ref{harmonicfieldsarebasic} that each one of the $1$-forms $\eta_i$ is also basic, hence $\eta_1\wedge\cdots\wedge\eta_{2n+q} = 0$ (the maximum rank of basic form in $H_B^\ast(\mathcal{F})$ being the codimension $2n$, which is less than $2n+q$), and therefore the volume form $\omega =  d\eta$ is exact, a contradiction.
    
\end{proof}

\begin{remark}
Consider a linear foliation of $\mathbb{R}^{2n+q}$ by $q$-planes of rational ``inclination", such that its projection $\mathcal{F}$ to the torus $\mathbb{T}^{2n+q}$ is a foliation by tori $\mathbb{T}^q$. Then $\mathcal{F}$  is a Riemannian foliation of $\mathbb{T}^{2n+q}$ when the latter is equipped with the flat metric coming from the euclidean metric of $\mathbb{R}^{2n+q}$. However, it is not a contact foliation since it admits global sections. More generally, the following corollary to Proposition \ref{cohomologyobst} guarantees no torus supports isometric contact foliations.
\end{remark}
    
\begin{corollary}
    There can be no isometric contact foliation on the torus $\mathbb{T}^{2n+q}$, whichever is $q \geq 1$. More generally, if $p: E \to \mathbb{T}^m$ is a $\mathbb{T}^l$-bundle, then $E$ supports no isometric contact foliation.
\end{corollary}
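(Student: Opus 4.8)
The plan is to reduce both claims to Proposition~\ref{cohomologyobst}, which forbids isometric contact foliations on any closed orientable manifold $M$ whose first Betti number equals $\dim M$. For the torus this is immediate: $\mathbb{T}^{2n+q}$ is closed and orientable, and $\dim H^1(\mathbb{T}^{2n+q};\mathbb{R}) = 2n+q = \dim \mathbb{T}^{2n+q}$, so Proposition~\ref{cohomologyobst} applies verbatim and excludes an isometric contact foliation for every $q \geq 1$.

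For the fibration $p\colon E \to \mathbb{T}^m$ with fibre $\mathbb{T}^l$ the total space is again closed and orientable (base and fibre are, and the structure group $\mathbb{T}^l$ acts by orientation-preserving diffeomorphisms), and $\dim E = m+l$; thus the whole matter reduces to establishing
\begin{equation*}
    \dim H^1(E;\mathbb{R}) = m+l.
\end{equation*}
To compute this I would run the Leray--Serre spectral sequence of $\mathbb{T}^l \hookrightarrow E \xrightarrow{p} \mathbb{T}^m$. Since the structure group is connected the local system on the base is trivial and $E_2^{s,t} = H^s(\mathbb{T}^m;\mathbb{R}) \otimes H^t(\mathbb{T}^l;\mathbb{R})$. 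The group $H^1(E;\mathbb{R})$ is assembled from the two bottom entries of total degree one: the base line $E_2^{1,0} = H^1(\mathbb{T}^m;\mathbb{R}) \cong \mathbb{R}^m$, which consists of permanent cocycles (it is the image of $p^\ast$ and cannot be hit by any differential); and the fibre line $E_2^{0,1} = H^1(\mathbb{T}^l;\mathbb{R}) \cong \mathbb{R}^l$, whose survivors are exactly $\ker d_2$ for the transgression $d_2\colon E_2^{0,1} \to E_2^{2,0} = H^2(\mathbb{T}^m;\mathbb{R})$. The goal is to show that the entire fibre line survives, i.e.\ $d_2 = 0$, so that $\dim H^1(E) = m+l$ and Proposition~\ref{cohomologyobst} closes the argument.

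The main obstacle is precisely the control of this transgression $d_2$, which is cup product against the real Euler classes of the $l$ circle factors of the bundle: the $l$ angular $1$-forms along the fibre descend to globally defined \emph{closed} forms on $E$ exactly when these curvatures are cohomologically trivial. When $E \to \mathbb{T}^m$ admits a flat connection --- the situation of Example~\ref{flatbundleexample} and of the toric extensions --- the real Euler classes vanish, $d_2 \equiv 0$, the fibre classes produce $l$ independent classes in $H^1(E;\mathbb{R})$ transverse to $p^\ast H^1(\mathbb{T}^m;\mathbb{R})$, and the Betti number is full. The crux of the general case is therefore to manufacture these $l$ fibre classes in $H^1(E;\mathbb{R})$ from the topology of the bundle, and this is where the bulk of the work must go.

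As a more intrinsic route that sidesteps the explicit Betti computation, I would invoke the refinement of Proposition~\ref{cohomologyobst} supplied by Theorem~\ref{harmonicfieldsarebasic}: on an isometric contact foliation every harmonic $1$-form is basic, and a wedge of basic forms of degree exceeding the codimension $2n$ vanishes identically. Consequently every $(2n+1)$-fold cup product of classes in $H^1(E;\mathbb{R})$ must vanish, and it suffices to exhibit a single nonzero such product for the relevant codimension $2n = m+l-q$. For the torus this is the standard nonvanishing product of generators; for $E$ one feeds the independent base and fibre classes from the previous step into the cup product, so the decisive ingredient is once again the survival of the fibre line under $d_2$.
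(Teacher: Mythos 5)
Your first half is correct and coincides with the paper's own argument: $\mathbb{T}^{2n+q}$ is closed and orientable with $\dim H^1(\mathbb{T}^{2n+q};\mathbb{R}) = 2n+q = \dim M$, so Proposition \ref{cohomologyobst} applies directly. (Your alternative finish via Theorem \ref{harmonicfieldsarebasic} --- a nonzero $(2n+1)$-fold cup product of degree-one classes versus the vanishing of basic forms above the codimension --- also works for the torus and is a mild strengthening, but it needs the same fibre classes in the bundle case, so it does not change the assessment below.)

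For the bundle half your proposal is genuinely incomplete, and you say so yourself: everything is reduced to the vanishing of the transgression $d_2\colon E_2^{0,1} \to E_2^{2,0}$, equivalently to $\dim H^1(E;\mathbb{R}) = m+l$, and you establish this only under a flatness hypothesis. This gap is real, and in fact it cannot be closed: the claim $d_2=0$ is false for general torus bundles over tori. The Heisenberg nilmanifold $N = H_3(\mathbb{Z})\backslash H_3(\mathbb{R})$ is a principal $S^1$-bundle over $\mathbb{T}^2$ with nonzero Euler class; there $d_2 \neq 0$ and $\dim H^1(N;\mathbb{R}) = 2 < 3 = \dim N$, so no Betti-number obstruction exists. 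Worse, $N$ carries the contact form descending from $dz - x\,dy$, whose Reeb field generates the fibre circle action; this is a regular contact manifold, a class the paper itself lists among the examples of R-flows, so $N$ is a $\mathbb{T}^1$-bundle over $\mathbb{T}^2$ supporting an isometric contact foliation. Hence the second assertion of the corollary is false as stated and requires an extra hypothesis (flatness of the bundle, or at least vanishing of the real Euler classes), under which your $d_2=0$ argument does complete the proof.

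It is worth noting that your spectral-sequence analysis locates exactly where the paper's own proof breaks down. The paper argues from the homotopy exact sequence $0 \to \mathbb{Z}^l \to \pi_1(E) \to \mathbb{Z}^m \to 0$ that one may lift generators and conclude $\pi_1(E) \approx \mathbb{Z}^{l+m}$; but an extension of $\mathbb{Z}^m$ by $\mathbb{Z}^l$ need be neither split nor abelian, and the discrete Heisenberg group is precisely such a non-abelian extension. The extension class the paper discards is, in real cohomology, your transgression $d_2$. So your refusal to assert $d_2=0$ in general was mathematically justified: the ``bulk of the work'' you flagged is not merely unfinished, it is an obstruction that makes the unqualified statement false.
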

\begin{proof}
    For a torus $\mathbb{T}^{2n+q}$ the first cohomology group is isomorphic to $\mathbb{R}^{2n+q}$. Hence it has the same dimension as the manifold, and the conclusion follows.
    
    As for a torus bundle $p: E \to \mathbb{T}^m$. It is a matter of noticing that the exact homotopy sequence of the fibration $\mathbb{T}^l \overset{i}{\hookrightarrow} E \xrightarrow[]{p} \mathbb{T}^m$ becomes 
    \begin{equation*}
        \cdots 0 \rightarrow \mathbb{Z}^l \overset{i_\ast}{\hookrightarrow} \pi_1(E) \xrightarrow[]{p_\ast} \mathbb{Z}^m,
    \end{equation*}
    hence 
    \begin{equation*}
        ^{\textstyle \pi_1(E)}\!\big/_{\textstyle \mathbb{Z}^l} \approx \mathbb{Z}^m.
    \end{equation*}
    If $\{e_i\}$ and $\{f_i\}$ are basis for $\mathbb{Z}^l$ and $\mathbb{Z}^m$, respectively, we set $E_i := i_\ast(e_i)$, and choose $E_{n+j}$ such that the class $E_{n+j} + \mathbb{Z}^l$ corresponds to $f_j$, thus obtaining a basis $\{E_1, \cdots, E_{n+m}\}$ for the fundamental group $\pi_1(E)$. Hence 
    \begin{equation*}
        \pi_1(E) \approx \mathbb{Z}^{m+n}.
    \end{equation*}    
    Now, the first de Rham cohomology group is isomorphic to $\mathrm{Hom}(\pi_1(E); (\mathbb{R},+))$ via 
    \begin{equation*} 
        [\omega] \mapsto \left( [\gamma] \mapsto \int_\gamma\omega\right). 
    \end{equation*}
    Since $\mathrm{Hom}(\pi_1(E); (\mathbb{R},+))$ is generated (with $\mathbb{R}$ coefficients) by the mappings sending each generator $E_i$ to $1$, it follows that 
    \begin{equation*}
        H^1(E;\mathbb{R}) \approx \mathrm{Hom}(\pi_1(E); (\mathbb{R},+)) \approx \mathbb{R}^{m+n}, 
    \end{equation*}
    and therefore $\dim H^1(E;\mathbb{R}) = m+n = \dim E$. According to Proposition \ref{cohomologyobst}, there can be no isometric contact foliation on $E$.
    
\end{proof}
    
Another consequence of Theorem \ref{harmonicfieldsarebasic} is the following theorem, which imposes geometric restrictions on the metric $g$, making a contact foliation $\mathcal{F}$ isometric.
    
\begin{theorem}\label{curverest}
    If $(M, \mathcal{F}, g)$ is an isometric foliation on a closed manifold $M$, then the sectional curvature of $g$ is neither strictly positive nor strictly non-positive. In other words, there are points $p,q \in M$ and planes $\pi \in T_pM$ and $\sigma \in T_qM$ such that 
    \begin{align*}
        \mathrm{sec}(\pi) &> 0 \\
        \mathrm{sec}(\sigma) &\leq 0.
    \end{align*}

\end{theorem}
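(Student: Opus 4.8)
The plan is to read off the sectional curvatures of the two qualitatively different kinds of $2$-planes one can build from the structure — those spanned by a Reeb field together with a contact direction, and those spanned by two Reeb fields — using that, with respect to the R-metric $g$, each Reeb field $R_i$ is a \emph{unit} Killing field. First I would record the two pointwise computations that drive everything. Writing $A_i := \nabla R_i$, the Killing condition makes $A_i$ skew-symmetric; expanding $d\lambda_i(X,Y) = X\lambda_i(Y) - Y\lambda_i(X) - \lambda_i([X,Y])$ with $\lambda_i = g(R_i,\cdot)$ and using that $\nabla$ is torsion-free gives the identity $d\lambda_i(X,Y) = 2\,g(\nabla_X R_i, Y)$. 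Since $\iota_{R_a}d\lambda_i = 0$ for every $a$ (because $\mathcal{R} = \ker d\lambda_i$), this identity immediately yields $\nabla_{R_a}R_i = 0$ for all $a,i$; the case $a=i$ recovers that Reeb flow lines are geodesics (cf. Corollary \ref{flowlinesaregeodesics}), and it shows that $A_i$ vanishes identically on $\mathcal{R}$ while remaining non-degenerate on $\xi$ (this non-degeneracy is exactly that of $d\lambda_i|_\xi$).

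For the strictly positive plane $\pi$ I would invoke the standard second-order identity for a Killing field $K$, $\nabla_X\nabla_Y K - \nabla_{\nabla_X Y}K = R(X,K)Y$. Taking $Y=X$ and $K=R_i$, pairing against $R_i$, and using that $|R_i|$ is constant (so $g(\nabla_\bullet R_i, R_i)=0$), all the boundary terms cancel and one is left with $\mathrm{sec}(R_i, X) = |\nabla_X R_i|^2$ for any unit $X \perp R_i$. Now choose $X \in \xi$, which is automatically orthogonal to $R_i$ since $\lambda_i(X)=0$: from $d\lambda_i(X,\cdot) = 2g(\nabla_X R_i,\cdot)$ and the non-degeneracy of $d\lambda_i$ on $\xi$ we get $\nabla_X R_i \neq 0$, hence $\mathrm{sec}(R_i,X) = |\nabla_X R_i|^2 > 0$. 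This produces a strictly positively curved plane at every point, and it holds for all $q \geq 1$.

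For the non-positive plane $\sigma$ I would instead span two distinct Reeb fields $R_i, R_j$ (this is where I use $q \geq 2$); these are orthonormal because $g(R_i,R_j) = \lambda_i(R_j) = \delta_{ij}$. As $[R_i,R_j]=0$ and $\nabla_{R_a}R_b \equiv 0$ from the first paragraph, the curvature tensor evaluates to $R(R_i,R_j)R_j = \nabla_{R_i}\nabla_{R_j}R_j - \nabla_{R_j}\nabla_{R_i}R_j - \nabla_{[R_i,R_j]}R_j = 0$, so $\mathrm{sec}(R_i,R_j)=0\leq 0$. Together with the preceding paragraph this gives the theorem. The only genuinely delicate point is bookkeeping the sign conventions in the Killing identity so that one lands on $+|\nabla_X R_i|^2$ and not its negative; once $d\lambda_i(X,Y)=2g(\nabla_X R_i,Y)$ is in hand, the rest is short. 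I would also flag the scope of the second half: it relies on having at least two independent Reeb directions, so the ``non-positive somewhere'' assertion genuinely requires $q\geq 2$ — for the classical contact case $q=1$ the round $S^{2n+1}$ is an isometric contact foliation of constant positive curvature, showing no analogue can hold there.
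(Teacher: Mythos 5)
Your proof is correct, and it takes a genuinely different route from the paper's. The paper proves both halves by contradiction, each resting on a cited theorem: for the ``some plane has positive curvature'' half it assumes $\mathrm{sec}\leq 0$ everywhere, invokes Lawson--Yau (an isometry homotopic to the identity on a closed non-positively curved manifold forces a non-vanishing parallel field $X$), and then uses Theorem \ref{harmonicfieldsarebasic} to push $X$ into $\xi$ while the identities $\mathcal{L}_X\lambda_i=0$, $\iota_X d\lambda_i=0$ push it into $\mathcal{R}=\ker d\lambda_i$, a contradiction; for the ``some plane has non-positive curvature'' half it assumes $\mathrm{sec}>0$ everywhere and contradicts the theorem (cited from Petersen) that two commuting Killing fields on a closed positively curved manifold must be somewhere linearly dependent, against the global frame $\{R_i\}$. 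Your argument instead computes curvatures explicitly: from $d\lambda_i(X,Y)=2g(\nabla_X R_i,Y)$ you get $\nabla_{R_a}R_i=0$ (so leaf planes satisfy $R(R_i,R_j)R_j=0$, i.e.\ $\mathrm{sec}(R_i,R_j)=0$), and the constant-length Killing identity gives $\mathrm{sec}(R_i,X)=\lvert\nabla_X R_i\rvert^2>0$ for unit $X\in\xi$, the strict positivity coming from non-degeneracy of $d\lambda_i\rvert_\xi$; your sign bookkeeping is right and both computations check out. What your approach buys is substantial: it is elementary and self-contained (no Lawson--Yau, no Petersen, no recourse to the harmonic-forms machinery of Theorem \ref{harmonicfieldsarebasic}), and it proves a strictly stronger statement -- at \emph{every} point there is both a flat plane (tangent to the leaf) and a positively curved plane (mixed Reeb--contact), rather than merely at some pair of points -- while also recovering Corollary \ref{flowlinesaregeodesics} along the way. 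Your closing caveat is also a genuine catch: the flat-plane half needs $q\geq 2$, and for $q=1$ the round $S^{2n+1}$ with its Sasakian structure shows the ``non-positive somewhere'' assertion is false; note that the paper's own proof of that half has exactly the same hidden restriction, since with a single Killing field Petersen's dependence theorem yields no contradiction, so the theorem as stated in the paper should really carry the hypothesis $q\geq 2$ (or be read as excluding the classical contact case).
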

\begin{proof}
    We argue by contradiction. Assume that $g$ is an R-metric whose sectional curvature is non-positive, and denote by $\nabla$ the Levi-Civita connection. Since the action is locally free, for small enough $t$, the flow $\phi_1^s$ of $R_1$ provides a homotopy between a non-trivial isometry $\phi^t$ and the identity $\mathrm{id} = \phi_1^0$. It follows that $M$ admits a \textit{non-vanishing} parallel vector field $X$ \cite[Proposition 2]{h_blaine_lawson_compact_1972}. Being a parallel field on a compact Riemannian manifold, $X$ must be Killing, and harmonic \cite[Proposition 5.12]{PoorWalterA.DifferentialGeometricStructures}. It follows from Theorem \ref{harmonicfieldsarebasic} that $X$ is everywhere tangent to the $q$-contact distribution, and its dual form $\mu = g(X,\cdot)$ is basic. We claim this implies $[R_i,X]=0$ for every $i$. Indeed, given $Y \in \Gamma(M)$, we have
    \begin{align*}
        0 = \mathcal{L}_{R_i}\mu(Y) &= R_i\mu(Y) -\mu([R_i, Y]) \\
        &= R_ig(X,Y) - g(X, [R_i, Y]) \\
        &= \mathcal{L}_{R_i}g(X,Y) - g(X, [R_i, Y]) + g([R_i, X], Y) + g(X, [R_i, Y])\\
        &= g([R_i, X], Y),
    \end{align*}
    from where it follows $[R_i,X]=0$, as we wanted.
    Moreover, since $X$ is also Killing, for arbitrary $Y \in \Gamma(M)$ and $1\leq i \leq q$, we have
    \begin{align*}
        \mathcal{L}_{X}\lambda_i(Y) &= X\lambda_i(Y) -\lambda_i([X, Y]) \\
        &= Xg(R_i,Y) - g(R_i, [X, Y]) \\
        &= \mathcal{L}_{X}g(R_i,Y)  + g([X, R_i], Y) \\
        &= g(R_i, [X, Y]) - g(R_i, [X,Y]) \\
        &= 0.
    \end{align*}
    Finally, since $\lambda_i(X) = \mathcal{L}_X\lambda_i = 0$, it follows
    \begin{equation*} 
        0 = \mathcal{L}_X\lambda_i =  d \iota_X\lambda_i + \iota_X d\lambda_i = \iota_X d\lambda_i,
    \end{equation*}
    that is, $X$ belongs to the kernel of $ d\lambda_i$, namely $\mathcal{R}$. This would imply $X = 0$, which contradicts the non-vanishing of $X$ as given by \cite[Proposition 5.12]{PoorWalterA.DifferentialGeometricStructures}, proving the corollary.
    
    On the other hand, if the sectional curvature of $g$ is strictly positive, then it follows from \cite[Theorem 8.3.5]{petersen_riemannian_2006} that any two commuting Killing fields on $M$ must be linearly dependent somewhere. We know, however, that $\{R_i\}$ is a global frame of commuting Killing fields for $\mathcal{R}$. Hence $g$ can not have positive sectional curvature.
\end{proof}

\subsubsection{Compatible Toric Actions}\label{comptoract}

 Intending to find closed orbits, we define a particular class of toric actions on a $q$-contact manifold. Adapting arguments from Banyaga and Rukimbira \cite{banyaga_characteristics_1995}, we show that contact manifolds supporting such actions satisfy the $\mathrm{WGWC}$. Later, we show that every isometric contact foliation supports a compatible action of $\mathbb{T}^q$ by isometries. The existence of this compatible action will allow us, by studying the closures of leaves, to conclude that in the class of isometric contact foliations, the $\mathrm{WGWC}$ implies the $\mathrm{SGWC}$.
    
\begin{definition}[\textit{Compatible toric action}] 
    Let $(M, \vec{\lambda}, \mathcal{R} \oplus \xi)$ be a contact foliation, and  $\mathbb{T}^l \to \mathrm{Diff}(M)$ be an action of the torus $\mathbb{T}^l$ on $M$. We will say this action is \textbf{compatible} with the contact foliation $\mathcal{F}$ if it preserves the non-degenerate forms $ d\lambda_i$.  
    More precisely, let $X_1, \cdots, X_l$ be infinitesimal generators of the toric action. The action is compatible if
    \begin{equation*}
        \mathcal{L}_{X_i} d\lambda_j = 0 ~\text{ for any } i, j.
    \end{equation*}
\end{definition}

The most critical example of contact foliations supporting compatible toric actions is the class of isometric contact foliations.
        
\begin{proposition}\label{compactisoR} 
    If $\mathcal{F}$ is an isometric contact foliation on a compact manifold $M$, then $\mathcal{F}$ admits a compatible action by isometries $\mathbb{T}^q \rightarrow \mathrm{Iso}(M).$ Moreover, the action of $\mathbb{T}^q$ and the contact action of $\mathbb{R}^q$ commute with one another.
\end{proposition}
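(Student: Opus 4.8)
The plan is to realise the desired torus as the \emph{closure, inside the isometry group, of the image of the contact action}. Since $g$ is an R-metric, condition (i) of that definition says each Reeb field $R_i$ is Killing, so every flow $\phi_i^t$ is an isometry of $(M,g)$ and the contact action $F(t_1,\dots,t_q)=\phi_1^{t_1}\circ\cdots\circ\phi_q^{t_q}$ takes values in $\mathrm{Iso}(M,g)$. Because the $R_i$ commute pairwise (Proposition \ref{reebfields}), this is a continuous homomorphism $\bar F\colon\mathbb{R}^q\to\mathrm{Iso}(M,g)$. By the Myers--Steenrod theorem $G:=\mathrm{Iso}(M,g)$ is a Lie group, and it is compact because $M$ is. I would then set $T:=\overline{\bar F(\mathbb{R}^q)}\subseteq G$ and take the sought action to be the restriction to $T$ of the tautological action $G\times M\to M$, which is by construction an action by isometries.

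First I would verify that $T$ is a torus. The subgroup $\bar F(\mathbb{R}^q)$ is connected, as the continuous image of the connected group $\mathbb{R}^q$, and abelian; hence so is its closure $T$. Being a closed subgroup of the compact Lie group $G$, it is a compact, connected, abelian Lie group, and such a group is a torus. Moreover $d\bar F$ is injective, since the $R_i$ are pointwise linearly independent and therefore linearly independent in the Lie algebra of Killing fields, so $\mathrm{span}\{R_1,\dots,R_q\}$ sits inside the Lie algebra of $T$; writing $T\cong\mathbb{T}^{q'}$ we thus have $q'\ge q$, and the infinitesimal generators $X_1,\dots,X_{q'}$ of $T$ extend the Reeb frame. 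I note that $q'$ need not equal $q$ in general (a quasi-periodic flow already closes up to a higher-dimensional torus), but $T$ is exactly the right object for the next subsection, since its orbits are the leaf closures $\overline{\mathcal{F}(x)}=\overline{\bar F(\mathbb{R}^q)\cdot x}$.

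Next comes compatibility and commutativity. For every $h\in\bar F(\mathbb{R}^q)$ one has $h^\ast d\lambda_j=d\lambda_j$, because each generating flow preserves $d\lambda_j$ by the identity $\mathcal{L}_{R_i}d\lambda_j=0$ recorded after Proposition \ref{reebfields}. The set $\{h\in G:\ h^\ast d\lambda_j=d\lambda_j\}$ is a closed subgroup of $G$, as $h\mapsto h^\ast d\lambda_j$ is continuous, and it contains the dense subset $\bar F(\mathbb{R}^q)$ of $T$; hence it contains all of $T$. Differentiating $h^\ast d\lambda_j=d\lambda_j$ along the one-parameter subgroups generated by the $X_i$ yields $\mathcal{L}_{X_i}d\lambda_j=0$ for all $i,j$, which is precisely the compatibility condition. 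Commutativity is then immediate: each $\bar F(a)$ lies in the \emph{abelian} group $T$, so it commutes in $G$ with every $h\in T$, i.e.\ $\bar F(a)\circ h=h\circ\bar F(a)$ as diffeomorphisms of $M$; equivalently, the $\mathbb{R}^q$-action $F$ and the $T$-action commute.

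The main obstacle is the passage of compatibility from the dense subgroup $\bar F(\mathbb{R}^q)$ to its closure $T$: one must confirm that ``$h$ preserves $d\lambda_j$'' is a closed condition, which requires fixing the topology (say, the $C^\infty$ topology on forms) for which the pullback map $h\mapsto h^\ast d\lambda_j$ is continuous, and then checking that the infinitesimal relation $\mathcal{L}_{X_i}d\lambda_j=0$ genuinely follows by differentiating at the identity. Everything else is formal once Myers--Steenrod supplies the compactness of $G$: that $T$ is a torus acting by isometries, and that it commutes with $F$, are then soft consequences of $T$ being a compact connected abelian group containing the image of $\bar F$.
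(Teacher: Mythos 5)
Your construction coincides with the paper's: your $T=\overline{\bar F(\mathbb{R}^q)}$ is precisely what the paper calls the enveloping group $\mathrm{E}(M,\vec{\lambda})$, and both arguments rest on Myers--Steenrod to get compactness of $\mathrm{Iso}(M)$, whence $T$ is a compact connected abelian Lie group, i.e.\ a torus of some dimension $q'\geq q$. Your compatibility argument (the stabiliser of $d\lambda_j$ under pullback is a closed subgroup of $\mathrm{Iso}(M)$ containing the dense subgroup $\bar F(\mathbb{R}^q)$ of $T$, hence all of $T$) is sound: since the action $\mathrm{Iso}(M)\times M\to M$ is smooth, $h\mapsto h^\ast d\lambda_j$ is continuous, and differentiating along one-parameter subgroups gives $\mathcal{L}_{X_i}d\lambda_j=0$. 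The paper reaches the same conclusion by a slightly different mechanism: every element of $\mathrm{E}(M,\vec{\lambda})$ commutes with the Reeb flows, hence pushes each $R_i$ forward to itself, hence preserves each $\lambda_i=g(R_i,\cdot)$ because it is an isometry of the R-metric, and then $h^\ast d\lambda_i=d\lambda_i$ follows by naturality. That route yields invariance of the coframe itself, which is more than compatibility requires; your density argument is an adequate substitute.

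The one genuine defect is that you never produce what the proposition asserts: an action of $\mathbb{T}^q$, the torus of dimension \emph{exactly} $q$. You construct $\mathbb{T}^{q'}$ with $q'\geq q$ and explicitly decline to cut down, on the grounds that $T$ is ``the right object'' for later use; whatever its later utility, this leaves the statement as written unproved. The repair is one line, and it is exactly what the paper does via the inclusion $\mathbb{T}^q\hookrightarrow\mathrm{E}(M,\vec{\lambda})$: since $\dim T=q'\geq q$, choose any $q$-dimensional subtorus $\mathbb{T}^q\subset T$ and restrict the tautological action to it. The restricted action is still by isometries; it is still compatible, because its generators lie in $\mathrm{Lie}(T)$ and every one-parameter subgroup of $T$ preserves the forms $d\lambda_j$; and it still commutes with the contact $\mathbb{R}^q$-action, because $T$ is abelian and contains $\bar F(\mathbb{R}^q)$. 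With that sentence added, your proof is complete and is essentially the paper's.
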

\begin{proof}
    Due to Proposition \ref{Rcontacttransverseproperty} we can assume without loss of generality that $g$ is an R-metric, that is, that $\lambda_i(X) = g(R_i, X)$. The contact action of $\mathbb{R}^q$ induces a representation of $\mathbb{R}^q$ in the group $\mathrm{Iso}(M).$ Let $\mathrm{E}(M, \vec{\lambda}) \subset \mathrm{Iso}(M)$ be the \emph{enveloping group} of the action, that is, the closure of the subgroup spanned by the action representation:
    \begin{equation*}
        \mathrm{E}(M, \vec{\lambda}) := \overline{\mathrm{Span}\left\{\bigcup_{i=1}^q\bigcup_{t\in\mathbb{R}}\phi^t_i\right\}},
    \end{equation*}
    where $\phi_i^t$ is the time $t$ flow map of the Reeb vector field $R_i$, as usual.
    By the famous Myers-Steenrod theorem \cite{myers_group_1939}, $\mathrm{Iso}(M)$ is a compact Lie group, and $\mathrm{E}(M, \vec{\lambda})$ is, therefore, the smallest compact Abelian subgroup containing the image of $\mathbb{R}^q$, hence $\mathrm{E}(M, \vec{\lambda})$ is a torus $\mathrm{E}(M, \vec{\lambda}) \approx \mathbb{T}^l$, with $l \geq q$. 
    Thus $\mathbb{T}^q \hookrightarrow \mathrm{E}(M, \vec{\lambda})$ induces an $\mathbb{T}^q$-action on $M$. By construction, this is an isometric action, so its generators $X_1, \cdots, X_q$ are Killing vector fields, that is, $\mathcal{L}_{X_i}g = 0$ for every $i$.
     
    We claim this action is compatible. Indeed, since all the elements in $\mathrm{E}(M, \vec{\lambda})$ commute, for any element $h \in \mathbb{T}^q$ we have $h\circ\phi_i^t = \phi_i^t\circ h$. This implies the action preserves the Reeb vector fields, that is, $ d h_p (R_i\rvert_p) = R_i\rvert_{h(p)}$. Therefore the action preserves the adapted coframe:
    \begin{align*}
        h^\ast\lambda_i\rvert_p(X) &= g_{h(p)}(R_i\rvert_{h(p)},  d h_p(X) )\\
        &= h^\ast g_p(R_i, X) \\
        &= \lambda_i\rvert_p(X).
    \end{align*}
    Moreover, from the naturality of the pullback operation
    \begin{equation*}
        h^\ast  d\lambda_i =  d\lambda_i,
    \end{equation*}
    so that the action is compatible, as we wished.
     
\end{proof}
        
Compatible toric actions are helpful when searching for closed orbits because whenever a generator $X_i$ is tangent to a leaf $\mathcal{F}(x)$, the leaf in question is not a plane.
        
\begin{proposition}\label{tangencycomp}
    Let $(M, \vec{\lambda}, \mathcal{R} \oplus \xi)$ be a closed $q$-contact manifold, and  $\mathbb{T}^s \to \mathrm{Diff}(M)$ be a compatible action with generators $X_1, \cdots, X_s$. Then each generator is tangent to $\mathcal{R}$ in at least two points. 
\end{proposition}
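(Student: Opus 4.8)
The plan is to reduce the tangency condition to the vanishing of an \emph{exact} $1$-form and then invoke compactness of $M$. First I would record the linear-algebraic fact that, for any vector field $X$, one has $\iota_X d\lambda_1 = 0$ if and only if $X$ is tangent to $\mathcal{R}$. Indeed, writing $X = R + Y$ with $R \in \mathcal{R}$ and $Y \in \xi$, condition (iii) of Definition \ref{qcontactstructure} gives $\iota_R d\lambda_1 = 0$, so $\iota_X d\lambda_1 = \iota_Y d\lambda_1$; since $d\lambda_1$ annihilates $\mathcal{R}$ and is non-degenerate on $\xi$ (condition (ii)), this $1$-form vanishes precisely when $Y = 0$. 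Thus detecting tangency of $X_i$ to $\mathcal{R}$ is the same as locating the zeros of $\iota_{X_i} d\lambda_1$.

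The core of the argument is to show that $\iota_{X_i} d\lambda_1$ is exact. Compatibility gives $\mathcal{L}_{X_j} d\lambda_1 = 0$ for every generator, so the connected group $\mathbb{T}^s$ preserves $d\lambda_1$, i.e. $h^\ast d\lambda_1 = d\lambda_1$ for all $h \in \mathbb{T}^s$. Averaging $\lambda_1$ over the (compact) torus with respect to Haar measure produces a smooth $\mathbb{T}^s$-invariant $1$-form $\bar\lambda_1 := \int_{\mathbb{T}^s} h^\ast \lambda_1 \, dh$ whose differential is $d\bar\lambda_1 = \int_{\mathbb{T}^s} h^\ast d\lambda_1\, dh = d\lambda_1$. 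Invariance yields $\mathcal{L}_{X_i}\bar\lambda_1 = 0$, and Cartan's formula then gives
\begin{equation*}
  \iota_{X_i} d\lambda_1 = \iota_{X_i} d\bar\lambda_1 = \mathcal{L}_{X_i}\bar\lambda_1 - d\big(\iota_{X_i}\bar\lambda_1\big) = -\,d w_i, \qquad w_i := \bar\lambda_1(X_i).
\end{equation*}

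With exactness in hand, the conclusion is immediate: the function $w_i$ attains a maximum and a minimum on the closed manifold $M$, and at each of these critical points $d w_i = 0$, hence $\iota_{X_i} d\lambda_1 = 0$, so $X_i$ is tangent to $\mathcal{R}$ there. If $w_i$ is non-constant these are two distinct points; if $w_i$ is constant then $\iota_{X_i} d\lambda_1 \equiv 0$ and $X_i$ is tangent to $\mathcal{R}$ everywhere, so in either case we obtain at least two points of tangency for each generator.

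I expect the only genuinely delicate step to be passing from ``$\iota_{X_i} d\lambda_1$ is closed'' -- which follows at once from compatibility via Cartan's formula -- to ``$\iota_{X_i} d\lambda_1$ is exact''; a closed $1$-form on $M$ can certainly be nowhere zero, so closedness alone is insufficient to force zeros. The averaging trick, available precisely because the symmetry group is a \emph{compact} torus, is what upgrades closedness to exactness by replacing $\lambda_1$ with an invariant $1$-form $\bar\lambda_1$ sharing its differential. This is the heart of the Banyaga--Rukimbira-style argument adapted to the present setting, and everything else is routine.
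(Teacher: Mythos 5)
Your proposal is correct and follows essentially the same route as the paper: average the primitive over the compact torus to obtain a $\mathbb{T}^s$-invariant $1$-form with the same differential, apply Cartan's formula to exhibit $\iota_{X_i}d\lambda_j$ as an exact form $\pm d\phi$, and locate tangency points as critical points of $\phi$ on the closed manifold $M$ (the paper phrases the computation via the auxiliary closed form $\lambda_j - \overline{\lambda}_j$, and carries it out for every index $j$ since the functions $\phi_{ij}$ are reused later, but for this proposition your restriction to $\lambda_1$ is immaterial because $\ker d\lambda_j = \mathcal{R}$ for each $j$). Your closing remark correctly identifies the heart of the matter -- upgrading closedness of $\iota_{X_i}d\lambda_1$ to exactness via invariant averaging -- which is exactly the role the averaged forms play in the paper.
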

\begin{proof}
    We consider on $M$ a $q$-form $\overline{\lambda}$, obtained from the leaf-wise volume element $\lambda := \lambda_1\wedge\cdots\wedge\lambda_q$ by averaging it with respect to a Haar measure $\mu$ on $\mathbb{T}^s$, in the following sense: 
    \begin{equation*}
        \overline{\lambda}_p(Y_1, \cdots, Y_q) := \int\displaylimits_{g \in \mathbb{T}^s}g^\ast\lambda_p(Y_1, \cdots, Y_q)  d\mu(g).
    \end{equation*}
    Note that $g$ is being used to write both the element of $\mathbb{T}^l$ and the diffeomorphism of $M$ it represents. Due to the invariance of $\mu$ under multiplication on $\mathbb{T}^s$, the form $\overline{\lambda}$ is invariant under the action of $\mathbb{T}^s$. In particular, for any $i \leq q$, any $l \leq s$ and any set of vector fields $Y_1, \cdots, Y_l$ we have 
    \begin{equation*}
        \mathcal{L}_{X_i}(\iota_{Y_1}\cdots\iota_{Y_l}\overline{\lambda}) = 0.
    \end{equation*}
    We also define $\overline{\lambda}_i$ by
    \begin{equation*}
        \overline{\lambda}_i\rvert_p(Y) := \int\displaylimits_{g \in \mathbb{T}^q}g^\ast\lambda_i\rvert_p(Y)  d\mu(g).
    \end{equation*}
    We consider, for each $i \leq s$ and $j\leq q$, the functions 
    \begin{equation*}
        \phi_{ij}(p):= (-1)\iota_{X_i}\overline{\lambda}_j\rvert_p,
    \end{equation*}
    Note that since $g^\ast X_i = X_i$ for any $g \in \mathbb{T}^q$ and generator $X_i$, these functions are constant along the orbits of the toric action. We claim that 
    \begin{equation}\label{diffsimples} 
        d\phi_{ij} = \iota_{X_i} d\lambda_j,   
    \end{equation}
    To see this, begin defining auxiliary forms $\alpha_j = \lambda_j - \overline{\lambda}_j$, and notice that since the action preserves $d\lambda_j$, the forms $\alpha_j$ is closed. Together with $\mathcal{L}_{X_i}\overline{\lambda}$, the closeness of $\alpha_j$ yields, for any generator $X_i$:
    \begin{equation*}
        \iota_{X_i} d\lambda_j +  d\iota_{X_i}\lambda_j = \mathcal{L}_{X_i}\lambda_j = \mathcal{L}_{X_i}\alpha_j + \mathcal{L}_{X_i}\overline{\lambda}_j =  d\iota_{X_i}\alpha_j =  d\iota_{X_i}\lambda_j -  d\iota_{X_i}\overline{\lambda}_j,
    \end{equation*}
    that is
    \begin{equation*}
        d \phi_{ij} = - d\iota_{X_i}\overline{\lambda}_j = \iota_{X_i} d\lambda_j, ~~i,j = 1, \cdots, q,
    \end{equation*}
    which proves \ref{diffsimples}.
    Moreover, for any $i, j, l$, the relation $\iota_{[R_i, X_j]} = [\mathcal{L}_{R_i}, \iota_{X_j}]$, together with Equation \ref{diffsimples}, yields
    \begin{align*}
        \iota_{[R_i, X_j] d\lambda_l} &= \mathcal{L}_{R_i}\iota_{X_j} d\lambda_l \\
        &= \iota_{R_i} d(\iota_{X_j} d\lambda_l) +  d(\iota_{R_i}\iota_{X_j} d\lambda_l) \\
        &= \iota_{R_i} d  d\phi_{ij}\\
        &=0.
    \end{align*}
    Hence $[R_i, X_j] \in \ker d\lambda_l = \mathcal{R}$, that is, the generators $X_i$ are \textit{foliate vector fields} of $\mathcal{F}$. In particular, their flows preserve the leaves of $\mathcal{F}$. 
    Finally, consider the function $\phi_{ij}$. Since  $M$ is closed, $\phi_{ij}$ has at least two critical points, where $\phi_{ij}$ attains its maximum and minimum, respectively. At a critical point $p$, one has
    \begin{equation*}
        0 =  d\phi_{ij}\rvert_p =  d\lambda_j\rvert_p(X_i\rvert_p, \cdot) \iff X_i\rvert_p \in \mathcal{R}_p,
    \end{equation*}
    that is, the critical points of $\phi_{ij}$ are exactly the points where $X_i$ is tangent to $\mathcal{R}.$
    
\end{proof}
    
\begin{theorem}\label{noplanefol}
    If a closed $q$-contact manifold admits a compatible toric action, then the contact foliation can not be a foliation by planes. In other words, closed $q$-contact manifolds supporting compatible toric actions satisfy the $\mathrm{WGWC}$.
\end{theorem}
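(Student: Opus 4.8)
The plan is to argue by contradiction, assuming that $\mathcal{F}$ is a foliation by planes; equivalently, the contact action $F$ is free, so every leaf $L=\mathcal{F}(x)$ is diffeomorphic to $\mathbb{R}^q$ via the orbit map $v\mapsto F(v,x)$, and every isotropy group $\mathrm{Iso}(\mathcal{F}(x))$ is trivial. Since the torus acts non-trivially, fix a generator $X:=X_i$ with $X\not\equiv 0$. The first task is to locate a point where $X$ is tangent to $\mathcal{R}$ and non-zero. Proposition \ref{tangencycomp} produces critical points of $\phi_{i1}=-\iota_X\overline{\lambda}_1$, at each of which $X$ lies in $\mathcal{R}=\ker d\lambda_1$. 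If $X$ vanished at every such critical point, then $\phi_{i1}$ would vanish at both its maximum and minimum, forcing $\phi_{i1}\equiv 0$ and hence $\iota_X d\lambda_1\equiv 0$; then $X$ would be a nowhere-transverse section of $\mathcal{R}$, and any point with $X\neq 0$ serves the same purpose. Either way I obtain $p\in M$ with $X|_p\in\mathcal{R}_p$ and $X|_p\neq 0$.

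Next I would pin down the orbit of $X$ through $p$. The proof of Proposition \ref{tangencycomp} already shows $[R_k,X]\in\mathcal{R}$ for all $k$, so the flow $\psi^t$ of $X$ preserves the bundle $\mathcal{R}$; combined with $X|_p\in\mathcal{R}_p$ this gives $X|_{\psi^t(p)}=d\psi^t(X|_p)\in\mathcal{R}_{\psi^t(p)}$ for all $t$. Thus $X$ is tangent to $\mathcal{R}$ all along its orbit, the orbit $\gamma:=\{\psi^t(p)\}$ is contained in the single leaf $L:=\mathcal{F}(p)$, and it is non-degenerate, since $d\psi^t(X|_p)\neq 0$. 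As $X$ generates a circle in the torus, $\psi^T=\mathrm{id}_M$ for some $T>0$, so $\gamma$ is a non-constant closed curve. Since $\psi^t$ is foliate and $\psi^t(p)\in L$, it maps $L$ to the leaf through $\psi^t(p)$, namely $L$ itself; hence the circle subgroup generated by $X$ preserves $L$ and acts on $L\cong\mathbb{R}^q$ with the non-trivial periodic orbit $\gamma$.

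The decisive step is to identify this circle action on $L$ with a translation. On each leaf the forms $\lambda_k|_L$ are closed, because $d\lambda_k$ kills $\mathcal{R}=TL$ by Definition \ref{qcontactstructure}(iii); together with $\lambda_k(R_m)=\delta_{km}$ this exhibits $\{R_1,\dots,R_q\}$ as a parallel (translation-invariant) frame for the flat affine structure on $L\cong\mathbb{R}^q$, the orbit map $F(\cdot,p)$ intertwining Euclidean translations with the Reeb flows. Writing $X|_L=\sum_k c_k R_k$, I claim the $c_k$ are constant, so that $\psi^t|_L$ is the translation $F(tc,\cdot)$ with $c=(c_1,\dots,c_q)\neq 0$. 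Granting this, $\psi^T=\mathrm{id}$ yields $F(Tc,p)=p$, i.e. $Tc\in\mathrm{Iso}(\mathcal{F}(p))\setminus\{0\}$, so the leaf carries a non-trivial isotropy element and is not a plane, which is the desired contradiction.

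The main obstacle is precisely the constancy of the coefficients $c_k$, equivalently the upgrade of the foliate relation $[R_k,X]\in\mathcal{R}$ to $[R_k,X]=0$ along $L$; compatibility alone only gives the former, and a genuinely non-commuting circle action on $\mathbb{R}^q$ (a rotation, say) can have circle orbits, so no purely topological argument closes the gap. In the situation of principal interest this is free: for an isometric contact foliation Proposition \ref{compactisoR} provides a compatible $\mathbb{T}^q$-action that commutes with the Reeb $\mathbb{R}^q$-action, whence $\psi^t_*R_k=R_k$, the $c_k$ are leaf-wise constant, and the translation argument applies verbatim. Handling an arbitrary compatible toric action amounts to showing that one may always arrange (after averaging the generators against the torus, or by replacing $X$ by its $\mathcal{R}$-projection) that the invariant circle acts affinely with non-zero translation part; this is the only point that requires care beyond the formal manipulations above.
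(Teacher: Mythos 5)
Your proposal follows the same skeleton as the paper's proof: Proposition \ref{tangencycomp} gives a point $p$ where a periodic generator $X$ of the toric action is tangent to $\mathcal{R}$, foliateness of $X$ keeps the $X$-orbit of $p$ inside the leaf $\mathcal{F}(p)$, and one then argues that a plane leaf cannot contain such an orbit. You are in fact more careful than the paper on two counts. First, you dispose of the degenerate possibility $X|_p=0$ (if $X$ vanishes at both extrema of $\phi_{i1}$ then $\phi_{i1}\equiv 0$, hence $\iota_X d\lambda_1\equiv 0$ and $X$ is everywhere tangent to $\mathcal{R}=\ker d\lambda_1$); the paper never addresses this, and without it the ``closed curve'' could degenerate to a point. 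Second, you pinpoint why a closed $X$-orbit inside a leaf is not by itself a contradiction when $q\geq 2$: a circle can act on $\mathbb{R}^q$ by rotations, and its closed orbits are then contractible. The paper elides exactly this point with the unjustified word ``essential''. For $q=1$ essentiality is free (a periodic curve in $\mathbb{R}$ with nowhere-vanishing velocity is impossible, which is why the Banyaga--Rukimbira case is safe), but for $q\geq 2$ it requires an argument, and the paper offers none.

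The genuine gap -- which you flag yourself -- is that your resolution of this issue, identifying the circle action on $L\cong\mathbb{R}^q$ with a non-trivial translation of the affine structure defined by the Reeb frame, needs the coefficients $c_k=\lambda_k(X)$ to be constant along $L$, i.e. $[R_k,X]=0$ there; compatibility plus foliateness give only $[R_k,X]\in\Gamma(\mathcal{R})$, and indeed $R_m c_k=\lambda_k([R_m,X])$ need not vanish. You close this only when the toric action commutes with the contact action, which by Proposition \ref{compactisoR} covers isometric contact foliations -- the only setting in which the paper subsequently uses Theorem \ref{noplanefol} (the corollary on isometric foliations and Theorem \ref{closedorbitsriemanniancase}). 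So your write-up is a correct proof for commuting compatible actions, but not of the statement in its stated generality; note, however, that the paper's own proof has the identical hole, so neither argument settles whether the theorem holds for an arbitrary compatible toric action. Your closing suggestions do not obviously repair it: averaging the generators over the torus changes nothing (they are already invariant, the torus being abelian), and the $\mathcal{R}$-projection of $X$, while automatically foliate, has no reason to be periodic, so the essential topological input -- a circle action -- is lost.
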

\begin{proof}
 

As we saw in the last proposition, the generating fields $X_i$ are foliate. Recall that the set of foliate vector fields is the normaliser 
    \begin{equation*}
        \mathfrak{L}(\mathcal{F}) := \{ X \in \Gamma(M); [X,Y] \in \Gamma(\mathcal{F}) \text{ for all } Y \in \Gamma(\mathcal{F})\}
    \end{equation*} of the Lie algebra $\Gamma(\mathcal{F})$ of vector fields tangent to $\mathcal{F}$, that is, sections of $\mathcal{R}$. The Lie algebra of sections of $\mathcal{R}$ is clearly a sub-algebra of $\mathfrak{L}(\mathcal{F})$, and the quotient 
    \begin{equation*}
        \mathfrak{t}(\mathcal{F}) := ~^{\textstyle \mathfrak{L}(\mathcal{F})}\!\big/_{\textstyle \Gamma(\mathcal{F})}
    \end{equation*}
    is the Lie algebra of \textit{transverse vector fields.} We denote by $\overline{X}$ the image of a field $X$ under the projection $\mathfrak{L}(\mathcal{F}) \to \mathfrak{t}(\mathcal{F})$. Each transverse field acts on the leaf space $M/{\mathcal{F}}$ via its flow. According to Proposition \ref{tangencycomp} above, if $p$ is a critical point of the function $\phi_{ij}$, then the generating field $X_i$ is tangent to $\mathcal{F}$ at $p$. Consequently, $\overline{X}_i$ is zero at the point (that is, the \emph{leaf}) $\mathcal{F}(p)$. This means the flow of $X_i$ fixes the entire leaf $\mathcal{F}(p)$. In particular, the entire orbit of $p$ under $X_i$ is contained in $\mathcal{F}(p)$.

Finally, since we can choose the generators of the toric action to be all periodic fields, it follows that $\mathcal{F}(p)$ contains an essential closed curve, and it is, therefore, not a plane.
   
\end{proof}
    
\noindent In particular, for $q=1$ Theorem \ref{noplanefol} is a result on circle invariant pre-symplectic forms by Banyaga and Rukimbira \cite{banyaga_characteristics_1995}.
    
\begin{corollary}
    Every isometric contact foliation on a closed manifold satisfies the $\mathrm{WGWC}$. 
\end{corollary}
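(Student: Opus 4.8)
The plan is to chain together the two immediately preceding results, since the corollary is a direct consequence of them. The key observation is that an isometric contact foliation satisfies the hypotheses needed to invoke Theorem \ref{noplanefol}, once we produce the required compatible toric action.

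First I would invoke Proposition \ref{compactisoR}. Since $\mathcal{F}$ is an isometric contact foliation on a closed (hence compact) manifold $M$, that proposition guarantees the existence of a compatible action by isometries $\mathbb{T}^q \to \mathrm{Iso}(M)$, whose generators $X_1, \dots, X_q$ are Killing fields preserving each $d\lambda_i$. This step relies on the enveloping-group argument: the closure in $\mathrm{Iso}(M)$ of the image of the contact $\mathbb{R}^q$-action is a torus (by Myers--Steenrod compactness of $\mathrm{Iso}(M)$), from which the compatible $\mathbb{T}^q$-action descends.

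With the compatible toric action in hand, I would apply Theorem \ref{noplanefol} verbatim. That theorem asserts that a closed $q$-contact manifold admitting a compatible toric action cannot carry a contact foliation by planes, which is precisely the assertion of the $\mathrm{WGWC}$. Concretely, the theorem's proof shows that the averaged functions $\phi_{ij}$ attain critical points where some generator $X_i$ becomes tangent to $\mathcal{R}$; at such a leaf the periodic field $X_i$ forces an essential closed curve, ruling out a plane.

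There is no genuine obstacle here, as the corollary is essentially a bookkeeping step combining Proposition \ref{compactisoR} and Theorem \ref{noplanefol}. The only point requiring the slightest care is verifying that the \emph{closed} hypothesis on $M$ is the same compactness hypothesis demanded by both prior results, so that the two may be composed without further assumption; this is immediate.
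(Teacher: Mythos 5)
Your proposal is correct and matches the paper's own argument exactly: the paper's proof is the one-line combination of Proposition \ref{compactisoR} (producing the compatible $\mathbb{T}^q$-action by isometries) with Theorem \ref{noplanefol}. Your additional remarks on the enveloping-group mechanism and the critical points of the $\phi_{ij}$ simply restate the content of those prior results and introduce nothing beyond the paper's reasoning.
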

\begin{proof}
    This is just Theorem \ref{noplanefol} restated in light of Proposition \ref{compactisoR}.
    
\end{proof}
    
Now we generalise a result from \cite{banyaga_characteristics_1995}, proving that an isometric contact action of $\mathbb{R}^q$ must have at least two closed orbits. To that end, we first need the following Proposition, due to Caramello Jr., which we state here for completeness:
    
\begin{proposition}[Proposition 3.1 in \cite{caramello_jr_positively_2019}]\label{caramello}
    Let $(M, \mathcal{F})$ be a Riemannian foliation and $\overline{X}$ be a transverse Killing vector field. Then each connect component $N$ of the set of zeros of $\overline{X}$ is an even codimensional closed submanifold of $M$ saturated by the leaves of $\mathcal{F}$. Moreover, $N$ is horizontally totally geodesic, and if $\mathcal{F}$ is transversely orientable, so is $(N, \mathcal{F}\rvert_N).$
\end{proposition}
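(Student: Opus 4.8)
The plan is to reduce the statement to the classical theorem of Kobayashi \cite{kobayashi_fixed_1958} on fixed-point sets of isometries, transported to the transverse geometry of the Riemannian foliation. First I would fix a bundle-like metric realising $\mathcal{F}$ as a Riemannian foliation; this gives a holonomy-invariant transverse metric $g^T$ on the normal bundle $Q = TM/T\mathcal{F}$ together with a canonical transverse Levi-Civita connection $\nabla^T$. Since $\overline{X} \in \mathfrak{t}(\mathcal{F})$ is a transverse field, its zero set $N$ --- the union of leaves along which a foliate representative $X$ is tangent to $\mathcal{F}$ --- is automatically saturated, so it only remains to establish the manifold structure and the geometric properties.

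The key local model is that on a simple foliated neighbourhood the foliation is the fibre foliation of a Riemannian submersion $\pi\colon U \to U_0$, and the transverse Killing field $\overline{X}$ descends to a genuine Killing field $\widehat{X}$ on the local quotient $(U_0, g_0)$, with $N \cap U = \pi^{-1}(\mathrm{Zero}(\widehat{X}))$. Equivalently, the transverse flow $\overline{\phi}^t$ of $\overline{X}$ is a (local) one-parameter group of transverse isometries whose fixed leaves constitute $N$. At a zero $p$ the endomorphism $A_p := (\nabla^T \overline{X})_p \in \mathrm{End}(Q_p)$ is well defined (independently of the representative, because $\overline{X}(p)=0$) and skew-symmetric by the transverse Killing equation, and $d\overline{\phi}^t$ acts on $Q_p$ as $\exp(tA_p)$.

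From here the argument parallels Kobayashi's. The transverse exponential map $\exp^{T}_{p}$ intertwines $\exp(tA_p)$ with $\overline{\phi}^t$, so near the leaf through $p$ the fixed leaves are exactly $\exp^{T}_{p}(\ker A_p)$. This simultaneously shows that $N$ is a saturated submanifold of $M$ whose normal directions in $Q$ are $\mathrm{Im}\, A_p = (\ker A_p)^{\perp}$; that it is horizontally totally geodesic, since as the transverse-exponential image of a linear subspace it contains every transverse geodesic tangent to it; and that its codimension equals $\mathrm{rank}\, A_p$, which is \emph{even} because $A_p$ is skew-symmetric. Assembling these local pictures, each connected component of $N$ is a closed, saturated, even-codimensional, horizontally totally geodesic submanifold. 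For the final clause, the polar decomposition of $A_p$ restricted to $\mathrm{Im}\, A_p$ yields a complex structure, hence a canonical orientation of the transverse normal bundle of $N$; combined with an orientation of $Q$ furnished by transverse orientability of $\mathcal{F}$, this orients the normal bundle of $(N, \mathcal{F}\rvert_N)$ inside $Q\rvert_N$, making it transversely orientable.

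The hard part will be the transverse-geometry bookkeeping rather than any single inequality: one must verify that $\overline{\phi}^t$ genuinely consists of transverse isometries (holonomy-invariance of $g^T$ along the flow), that $A_p$ is well defined and skew, and that the transverse exponential commutes with transverse isometries. These facts are immediate in the Riemannian-submersion local model, but patching them coherently across charts --- and thereby guaranteeing that the locally defined fixed sets glue to globally closed components of constant codimension --- is where Molino's structure theory \cite{molino_riemannian_2012} does the essential work.
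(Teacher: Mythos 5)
The paper does not actually prove this statement: it is quoted verbatim, with attribution, as Proposition~3.1 of Caramello~Jr.\ and T\"oben \cite{caramello_jr_positively_2019}, and is stated ``for completeness'' only. So there is no internal proof to compare against; what can be said is that your proposal is a correct reconstruction of essentially the argument used in the cited source: localise to the Riemannian-submersion quotients furnished by a bundle-like metric, push $\overline{X}$ down to a genuine Killing field $\widehat{X}$ on each local quotient, and run Kobayashi's linearisation \cite{kobayashi_fixed_1958} --- the zero set near a zero $p$ is the exponential image of $\ker A_p$ with $A_p = (\nabla \widehat X)_p$ skew-symmetric, giving the submanifold structure, even codimension, total geodesy, and (via the complex structure on $\mathrm{Im}\,A_p$ coming from the polar decomposition) orientability of the transverse normal bundle. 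Two small remarks on your write-up. First, saturation and closedness of the zero set deserve the one-line justifications you gesture at: $\overline{X}$ is a holonomy-invariant section of $Q$, so its vanishing is a leafwise (hence saturated) and closed condition. Second, you somewhat overstate the role of Molino's structure theory in the ``patching'': being a submanifold, having even codimension, and being horizontally totally geodesic are all local properties, and on a connected component the codimension is constant simply by connectedness, so once the local quotient picture is established the global statement follows with no further gluing argument; Molino theory is a natural language for the setup, but it is not doing essential work at that step. Your caution in working with local quotient flows rather than a global flow of $\overline{X}$ on $M$ is, on the other hand, exactly right, since a transverse Killing field need not integrate to a one-parameter group of isometries of $M$ itself.
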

    
This is a generalisation to Riemannian foliations of a result for Riemannian manifolds regarding zero sets of Killing fields, that is, sets of points where the vector field vanishes (cf. \cite[Theorem 5.3, Chapter II]{kobayashi_transformation_1995}, and also \cite{kobayashi_fixed_1958}). We will need the following simple lemma as well.
    
\begin{lemma}\label{restevencod} 
    Let $(M, \vec{\lambda}, \mathcal{R} \oplus \xi)$ be a contact foliation. If $N \subset M$ is an $\mathcal{F}$-saturated submanifold of even codimension, then $(N, \vec{\lambda}\rvert_N, \mathcal{R}\rvert_N \oplus \xi \cap TN)$ is a $q$-contact structure on $N$.
\end{lemma}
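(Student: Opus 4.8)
The claim is that restricting a $q$-contact structure to an $\mathcal{F}$-saturated submanifold $N$ of even codimension yields again a $q$-contact structure. The plan is to verify conditions (i), (ii), (iii) of Definition \ref{qcontactstructure} directly for the triple $(N, \vec{\lambda}\rvert_N, \mathcal{R}\rvert_N \oplus (\xi\cap TN))$, the key point being that $N$ being $\mathcal{F}$-saturated forces $\mathcal{R}\rvert_N \subset TN$, so the Reeb directions survive the restriction, while the evenness of the codimension is exactly what guarantees that $d\lambda_i$ stays non-degenerate on the intersection $\xi\cap TN$.

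First I would record the tangency of the Reeb bundle. Since $N$ is saturated, it is a union of leaves of $\mathcal{F}$; as the leaves are tangent to $\mathcal{R}$, we get $\mathcal{R}\rvert_N \subseteq TN$, and in particular all Reeb vector fields $R_i$ restrict to vector fields on $N$, pairwise commuting, with $\lambda_i\rvert_N(R_j) = \delta_{ij}$. This immediately shows the $\lambda_i\rvert_N$ are linearly independent and non-vanishing, and that $\mathcal{R}\rvert_N$ is a rank-$q$ subbundle of $TN$ on which the restricted forms behave exactly as before. Next I would identify the candidate contact distribution: setting $\xi_N := \xi\cap TN$, I need $\xi_N = \bigcap_i\ker(\lambda_i\rvert_N)$. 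The inclusion $\xi_N \subseteq \bigcap_i\ker(\lambda_i\rvert_N)$ is clear; conversely, if $X\in T_pN$ kills every $\lambda_i$, then $X\in\xi_p$, so $X\in\xi_N$. Combined with $\mathcal{R}\rvert_N\subset TN$ and the splitting $TM=\mathcal{R}\oplus\xi$, a dimension count gives $T_pN = \mathcal{R}_p\oplus (\xi_N)_p$, establishing condition (i) and the desired splitting simultaneously, provided $\xi_N$ has constant rank.

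The main obstacle — and the only place the parity hypothesis is used — is verifying condition (ii): that $d\lambda_i\rvert_{\xi_N}$ is non-degenerate. Here I would work fibrewise in the symplectic vector space $(\xi_p, d\lambda_i\rvert_p)$. The subspace $W := (\xi_N)_p = \xi_p\cap T_pN$ is a linear subspace of $\xi_p$, and I want its symplectic form to be non-degenerate, i.e. $W$ to be a symplectic subspace, equivalently $W\cap W^{\perp_\omega} = 0$ where $\omega := d\lambda_i\rvert_p$. The codimension of $N$ in $M$ equals the codimension of $W$ in $\xi_p$ (since both $N$ and $M$ contain the full Reeb directions), and that codimension is even by hypothesis. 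The clean way to force non-degeneracy is to observe that $W$ is cut out inside $\xi_p$ by the differentials of functions that are holonomy-invariant for $\mathcal{F}$: because $N$ is saturated, near $p$ it is locally $\Phi^{-1}(S)$ for a submanifold $S$ of a transversal, and the holonomy-invariance of each $d\lambda_i$ (Proposition \ref{holonomyproperties}) means $\omega$ descends to a symplectic form on the transversal. A saturated submanifold of even \emph{real} codimension corresponds to an even-codimension submanifold $S$ of the symplectic transversal; one then uses that an even-codimensional subspace of a symplectic vector space can be arranged — and here \emph{is}, by the contact geometry — to be symplectic, i.e. $\omega\rvert_W$ non-degenerate. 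I would make this precise by noting $\dim W$ is even, so $\dim(W\cap W^{\perp})$ is even as well, and ruling out a nontrivial radical via the rank of $\omega\rvert_W$; the evenness is what excludes the odd-dimensional (necessarily degenerate) case. This in particular shows $\dim\xi_N$ is constant, closing the gap left in Step 2.

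Finally, condition (iii), $\ker(d\lambda_i\rvert_N) = \mathcal{R}\rvert_N$, follows formally: from $TN = \mathcal{R}\rvert_N\oplus\xi_N$, any $X\in\ker(d\lambda_i\rvert_N)$ decomposes as $X = X_{\mathcal{R}} + X_\xi$; since $\iota_{R_j}d\lambda_i = 0$ the Reeb part is automatically in the kernel, while non-degeneracy of $d\lambda_i$ on $\xi_N$ (just proved) forces $X_\xi = 0$. Hence $\ker(d\lambda_i\rvert_N) = \mathcal{R}\rvert_N$, and all three axioms hold, so $(N,\vec{\lambda}\rvert_N,\mathcal{R}\rvert_N\oplus\xi_N)$ is a $q$-contact structure. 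I expect the bookkeeping in Steps 1, 2, and 4 to be routine; essentially the entire content of the lemma is concentrated in the symplectic-linear-algebra fact of Step 3, and I would spend the bulk of the write-up making the even-codimension hypothesis do its work there.
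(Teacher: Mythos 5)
Your Steps 1, 2 and 4 coincide with the paper's proof: saturation gives $\mathcal{R}\rvert_N \subset TN$, hence $\bigcap_i\ker(\lambda_i\rvert_N) = \xi\cap TN$ and the splitting $TN = \mathcal{R}\rvert_N \oplus (\xi\cap TN)$, and condition (iii) of Definition \ref{qcontactstructure} follows formally once (ii) is known. The genuine gap is Step 3, which you correctly identify as carrying all the content. The linear-algebra principle you invoke there is false. Evenness of $\dim W$ does imply that the radical $W\cap W^{\perp_\omega}$ is even-dimensional (skew forms have even rank), but an even-dimensional radical need not be zero: a Lagrangian plane $W \subset (\mathbb{R}^4,\omega_{\mathrm{std}})$ has $W\cap W^{\perp_\omega} = W$, of dimension $2$. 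So the sentence ``the evenness is what excludes the odd-dimensional (necessarily degenerate) case'' is a non sequitur, and the appeal to holonomy invariance does not help: it only says that $\xi\cap TN$ descends to the tangent space of an even-codimensional submanifold $S$ of the symplectic transversal, and such an $S$ can perfectly well be isotropic. Indeed, no argument from ``saturated of even codimension'' alone can close this step: in $(\mathbb{R}^5, \lambda = dz + x_1dy_1 + x_2dy_2)$ the codimension-two saturated submanifold $N = \{x_1 = x_2 = 0\}$ has $\lambda\rvert_N = dz$, so $d\lambda\rvert_{TN} \equiv 0$ and $\xi\cap TN$ is Lagrangian; a closed example is the preimage under the Hopf map $S^5 \to \mathbb{CP}^2$ of a Lagrangian $\mathbb{RP}^2$.

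To be fair to you, the paper's own proof is silent at exactly the same spot: it simply asserts $\ker(d\lambda_i\rvert_N) = \mathcal{R}\cap TN$, which is precisely the non-degeneracy you were trying to establish, and it never actually uses the parity of the codimension beyond writing $\mathrm{codim}\, N = 2m$. What really makes the conclusion true in the paper's application (Theorem \ref{closedorbitsriemanniancase}) is not the stated hypothesis but the way $N$ arises there: as a component of the zero set of a transverse Killing field of an R-metric (Proposition \ref{caramello}). In that situation the linear isotropy action at a point of $N$ preserves both $g^\perp$ and $d\lambda_i$, hence commutes with a compatible almost complex structure $J$ on $(\xi, d\lambda_i)$; its fixed subspace $\xi\cap TN$ is therefore $J$-invariant, and $J$-invariant subspaces are symplectic since $d\lambda_i(v, Jv) = g^\perp(v,v) > 0$. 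So your instinct that some extra geometric input ``by the contact geometry'' is needed was right; the gap is that your write-up substitutes a false symplectic-linear-algebra claim for that input, and the needed input is genuinely stronger than the lemma's hypotheses.
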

\begin{proof}
    First note that, since $N$ is saturated (that is, $\mathcal{F}(p) \subset N$ for every $p \in N$), we have $\dim N \geq q$, and therefore $\mathrm{codim} N = 2m$ for some $m \leq n$. Moreover, all Reeb vector fields are everywhere tangent to $N$, hence $\mathcal{R}\rvert_N \subset TN$. 
    One can also check that
    \begin{equation*}
        \ker\left(\lambda_i\rvert_N\right) = \ker\lambda_i \cap TN,
    \end{equation*} 
    for any $i$ and, consequently, 
    \begin{equation*}
        \bigcap_i\ker\left(\lambda_i\rvert_N\right) = \xi \cap TN.
    \end{equation*}
    Moreover, the derivatives satisfy
    \begin{equation*}
        \ker\left( d\lambda_i\rvert_N\right) = \mathcal{R} \cap TN = \mathcal{R}\rvert_N,
    \end{equation*}
    for whichever $i$ we choose. Hence
    \begin{equation*}
    \begin{split}
        TN &= (TN \cap \mathcal{R}) \oplus (TN \cap \xi) \\
        &= \mathcal{R}\rvert_N \oplus (TN \cap \xi) \\
        &= \ker\left( d\lambda_i\rvert_N\right) \oplus \left( \bigcap_j\ker\left(\lambda_j\rvert_N\right)\right),
    \end{split}
    \end{equation*} 
    and therefore $(N, \lambda_1\rvert_N, \cdots, \lambda_q\rvert_N, \mathcal{R}\rvert_N \oplus \xi \cap TN)$ is a $q$-contact structure on $N$, as claimed.

\end{proof}

\begin{theorem}\label{closedorbitsriemanniancase}
    Every isometric contact foliation $\mathcal{F}$ on a closed manifold $M$ satisfies the $\mathrm{SGWC}$. Moreover, $\mathcal{F}$ has at least $2$ closed orbits; and the set $\mathcal{C}_\mathcal{F}$ of closed orbits of $\mathcal{F}$ consists of a union 
    \begin{equation*}
        \mathcal{C}_\mathcal{F} = \bigcup N_i,
    \end{equation*}
    where each $N_i$ is an even-codimension totally geodesic closed submanifold of $(M,g)$ carrying a closed contact foliation, that is, one where every leaf is closed.
\end{theorem}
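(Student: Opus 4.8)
The plan is to exploit the compatible torus action of Proposition \ref{compactisoR} in its maximal form: the enveloping group $\mathrm{E}(M,\vec\lambda)=\overline{\mathrm{Im}(\mathbb R^q\to\mathrm{Iso}(M))}$ is a torus $\mathbb T^l$ ($l\ge q$) acting by isometries and commuting with the contact action, into which $\mathbb R^q$ maps with dense image. Since every element of $\mathbb T^l$ commutes with the Reeb flows it preserves each $R_i$, each $\lambda_i$ and each $d\lambda_i$ (the computation is the one in Proposition \ref{compactisoR}), so its $l$ infinitesimal generators $Y_1,\dots,Y_l$ are Killing, foliate, and compatible. Because a leaf $\mathcal F(x)$ is dense in the compact orbit $\mathbb T^l\cdot x$, the latter is exactly the leaf closure $\overline{\mathcal F(x)}$. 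I would first record the decisive equivalence: writing $\overline{Y}_j$ for the transverse Killing field induced by $Y_j$, a leaf $\mathcal F(x)$ is closed if and only if $\dim(\mathbb T^l\cdot x)=q$, if and only if every $Y_j|_x$ lies in $\mathcal R_x$ (the span $\{Y_j|_x\}$ always contains $\mathcal R_x$, as each $R_i$ is a fundamental field of $\mathbb T^l$). The nontrivial direction — that a $q$-dimensional $\mathbb T^l$-orbit forces $\mathcal F(x)=\mathbb T^l\cdot x$ — follows because the dense subgroup $\iota(\mathbb R^q)\,\mathrm{Stab}/\mathrm{Stab}$ is then open, hence closed, in the connected compact group $\mathbb T^l/\mathrm{Stab}$. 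This identifies the closed-orbit set as the common zero locus
\[
\mathcal C_{\mathcal F}=\bigcap_{j=1}^{l} Z(\overline{Y}_j),\qquad Z(\overline{Y}_j):=\{x:\,Y_j|_x\in\mathcal R_x\},
\]
and a compact locally free $\mathbb R^q$-orbit is a torus $\mathbb T^q$, giving the homeomorphism type demanded by the $\mathrm{SGWC}$.

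Next I would obtain the submanifold structure and non-emptiness by a descending induction that peels off one transverse field at a time. Set $W_0=M$ and $W_k=\bigcap_{j\le k}Z(\overline Y_j)$, and suppose $N$ is a connected component of $W_{k-1}$: by the inductive hypothesis it is an even-codimensional, totally geodesic, closed, $\mathcal F$-saturated submanifold, so Lemma \ref{restevencod} makes $(N,\mathcal F|_N)$ a closed $q$-contact manifold. Restricting the R-metric is legitimate — $N$ is totally geodesic and each $Y_j$, being tangent to the invariant $N$ and Killing on $M$, restricts to a Killing field — so $(N,\mathcal F|_N)$ is an isometric contact foliation on which $\mathbb T^l$ still acts compatibly. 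Applying Proposition \ref{tangencycomp} to this compatible action on the closed manifold $N$ shows $Y_k$ is tangent to $\mathcal R|_N$ in at least two points, whence $Z(\overline{Y}_k)\cap N\ne\emptyset$; Proposition \ref{caramello}, applied to the transverse Killing field $\overline{Y}_k|_N$ on the Riemannian foliation $(N,\mathcal F|_N)$, exhibits each component of that zero set as an even-codimensional (in $N$, hence in $M$) horizontally totally geodesic closed saturated submanifold. Iterating to $k=l$ yields $\mathcal C_{\mathcal F}=W_l$ as a non-empty disjoint union $\bigcup_i N_i$ of even-codimensional closed saturated submanifolds, which in particular proves the $\mathrm{SGWC}$.

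It remains to upgrade ``horizontally totally geodesic'' to ``totally geodesic'' and to count orbits. For the R-metric the leaves of $\mathcal F$ are themselves totally geodesic: with $\{R_i\}$ an orthonormal frame of commuting Killing fields, using $[R_i,R_j]=0$ and the Killing identity one gets $g(\nabla_{R_i}R_j,Z)=0$ for every $Z\in\xi$, so the second fundamental form of each leaf vanishes; combined with the horizontal total geodesy of Proposition \ref{caramello} this makes each $N_i$ totally geodesic in $(M,g)$. Since $N_i\subseteq\mathcal C_{\mathcal F}$, every leaf contained in $N_i$ is closed, so $(N_i,\mathcal F|_{N_i})$ is a closed contact foliation in the sense of the statement. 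For the count I would return to the leaf-constant function of Proposition \ref{tangencycomp} at the last reduction step: on a component $N$ of $W_{l-1}$ the averaged function $\varphi$ with $d\varphi=\iota_{Y_l}d\lambda$ attains a maximum and a minimum on the closed manifold $N$, and these distinct critical leaves lie in $\mathcal C_{\mathcal F}$; should the two extrema coincide, $\varphi$ is constant and $N\subseteq\mathcal C_{\mathcal F}$ is a positive-codimensional closed contact submanifold, already containing infinitely many closed leaves. Either way $\mathcal F$ has at least two closed orbits, exactly as in the circle case of Banyaga and Rukimbira.

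The main obstacle is the inductive step, and specifically keeping the hypotheses of Caramello's Proposition \ref{caramello} intact under restriction: one must verify that restricting the R-metric to the totally geodesic, $\mathbb T^l$-invariant submanifold $N$ again yields a bundle-like metric for $\mathcal F|_N$ for which $\overline{Y}_k|_N$ is a genuine transverse Killing field, and that the even-codimension count is additive through the iteration. The subtle point is that the transverse fields do \emph{not} assemble into an honest action on a (non-Hausdorff) leaf space — the dense subgroup $\iota(\mathbb R^q)$ acts trivially there — so the characterization of $\mathcal C_{\mathcal F}$ and the entire reduction must be phrased pointwise through the zero loci $Z(\overline Y_j)$ rather than through a transverse quotient. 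A secondary nuisance is the degenerate case in the two-orbit count, where the extrema of $\varphi$ coincide and one must argue that the resulting invariant submanifold still carries more than one closed leaf.
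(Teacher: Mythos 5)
Your proposal follows the paper's proof in all essentials: the same compatible isometric toric action (Proposition \ref{compactisoR}), the same tangency criterion and averaged functions (Proposition \ref{tangencycomp}), the same inductive intersection of zero sets of transverse Killing fields via Proposition \ref{caramello} and Lemma \ref{restevencod}, and the same max/min count at the last stage. Your one real departure is to run everything with the full enveloping torus $\mathbb{T}^l=\mathrm{E}(M,\vec{\lambda})$ and to prove, via the open--dense--subgroup argument, the pointwise criterion ``$\mathcal{F}(x)$ is closed if and only if all $l$ generators lie in $\mathcal{R}_x$''. This is a genuine improvement rather than a mere variant: the paper runs the induction with a subtorus $\mathbb{T}^q\hookrightarrow\mathrm{E}(M,\vec{\lambda})$, and tangency of only those $q$ generators at a point does not by itself force $\dim\bigl(\mathrm{E}(M,\vec{\lambda})\cdot x\bigr)=q$, since the remaining $l-q$ generators could a priori stay transverse there; your version is exactly what makes the step ``all generators tangent at $p$, hence $\mathcal{F}(p)$ closed'' airtight.

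There is, however, a genuine gap in your count of two closed orbits. In the degenerate case you assert that if $\varphi$ is constant on the component $N$ of $W_{l-1}$, then $N\subseteq\mathcal{C}_\mathcal{F}$ ``already contains infinitely many closed leaves''. This fails precisely when $\dim N=q$: each reduction cuts down by even codimension, and nothing prevents the final component from being $q$-dimensional, in which case $N$ is itself a \emph{single} closed leaf and your fallback yields one orbit, not two. This situation genuinely occurs: for an irregular K-contact flow on $S^3$ (so $q=1$, $l=2$), the locus where the transverse generator vanishes is the disjoint union of two circles, each one closed orbit. The repair is to observe that each function of Proposition \ref{tangencycomp} is \emph{locally constant} on the corresponding zero set (at a zero, $d\phi_{ij}=\iota_{X_i}d\lambda_j=0$), so whenever its maximum and minimum values differ, the two extremal points lie in different components of that zero set; one then continues the induction inside each of the two disjoint invariant components, producing closed leaves in each. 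If instead every function at every stage is constant, then every zero set equals the whole previous manifold, so $\mathcal{C}_\mathcal{F}=M$, which has dimension $2n+q>q$ and hence carries infinitely many closed leaves. (The paper's own wording shares this soft spot -- its two extremal critical points could lie on one leaf -- so the bookkeeping is needed in either write-up.) A second, smaller gap: ``leafwise totally geodesic plus horizontally totally geodesic'' does not formally imply totally geodesic; the mixed terms $\mathrm{II}(R_i,X)$, $X\in\xi\cap TN$, must be checked. They do vanish, because for the Killing field $R_i$ one has $g(\nabla_XR_i,\nu)=\tfrac12\,d\lambda_i(X,\nu)$ and the linearisation of the compatible Killing field along its zero set is infinitesimally symplectic for $d\lambda_i$, making $\xi\cap TN$ and the normal space $d\lambda_i$-orthogonal; alternatively, follow the paper's final paragraph, which places each component of $\mathcal{C}_\mathcal{F}$ inside the fixed-point set of an isotropy subtorus of $\mathrm{E}(M,\vec{\lambda})$ and quotes \cite[Corollary 1]{kobayashi_fixed_1958} directly.
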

\begin{proof}
    Let $\mathbb{T}^q \rightarrow \mathrm{Iso}(M)$ be a compatible action by isometries as in Proposition \ref{compactisoR}. As remarked before in Proposition \ref{tangencycomp}, since the generators $X_1, \cdots, X_q$ of the compatible toric action are foliated vector fields, to find a close orbit, it is sufficient to find a point where all the generators are tangent to $\mathcal{R}$. The main idea here is that the set of points where these generators are tangent to the foliation can be realised as a zero set of a Killing vector field. Hence it has even codimension and therefore carries a $q$-contact structure, according to Lemma \ref{restevencod}. 
    
    Given a generator $X_i$, we denote by $\overline{X}_i$ its image under the projection onto the algebra of transverse fields. By construction, each $\overline{X}_i$ is \textit{a transverse Killing vector field}, that is, one that preserves the metric $g^\perp$ on $\xi$.
    
    Consider the set of zeros of $\overline{X}_1$, i.e., 
    \begin{equation*}
        \{p \in M; \overline{X}_1\rvert_p = 0\} \subset M.
    \end{equation*}
    This is precisely the set of points $p \in M$ where $X_1$ is tangent to the foliation, and, due to Proposition \ref{tangencycomp}, it has at least two elements. Let $M_1$ be a connected component of this set. 
    It follows from Proposition \ref{caramello} that $M_1$ is a closed submanifold of even codimension saturated by $\mathcal{F}$. According to Lemma \ref{restevencod}, $M_1$ admits a $q$-contact structure inducing on $M_1$ the same foliation as $\mathcal{R}\rvert_{M_1}$, and whose Reeb vector fields are $R_i\rvert_{M_1}$. 
    By construction, $X_1$ is everywhere tangent to the leaves on $M_1$. Now, since $M_1$ is closed, the function $\phi_{2j} = -\lambda_j(X_2)$ has a critical point on $M_1$, and hence the set 
    \begin{equation*}
        \{p \in M_1; \overline{X}_2\rvert_p = 0\}
    \end{equation*} is non-empty. We choose a connected component $M_2$ on this set. It follows again from proposition \ref{caramello} that $M_2$ is a closed submanifold of even codimension (both in $M$ and $M_1$), which is saturated by $\mathcal{F}$ and on which $X_2$ restricts to a vector field everywhere tangent to the leaves. 
    
    Iterating this procedure, we eventually arrive at an $\mathcal{F}$-saturated submanifold $M_{q-1}$, of even codimension, on which $X_1, \cdots, X_{q-1}$ are everywhere tangent to the leaves, by construction. 
    On this closed submanifold, the functions $\phi_{q,j}$ have at least two distinct critical points $p$ and $q$, corresponding to the extrema of the function, so that both $X_q\rvert_{p}$ and $X_q\rvert_{q}$ are tangent to $\mathcal{R}$. 
    By construction, all the $X_i$ are tangent to $\mathcal{R}$ at $p$ and $q$, hence, by Proposition $\ref{tangencycomp}$, the leaves $\mathcal{F}(p)$ and $\mathcal{F}(q)$ are closed.
    
    This proves the subset $\mathcal{C}_\mathcal{F}$ of points in $M$ belonging to a closed leaf is non-empty. It is known since the work of Myers and Steenrod \cite{myers_group_1939} that the closures of orbits are the orbits of the enveloping group $\mathrm{E}(M, \vec{\lambda})$ (cf. Proposition \ref{compactisoR}). 
    Thus, if $p \in \mathcal{C}_{\mathcal{F}}$, then $p$ belongs to a closed leaf $\mathcal{F}(p) = \overline{\mathcal{F}(p)}$, which is the orbit of $p$ under the action of $\mathrm{E}(M, \vec{\lambda})$. 
    Since $\mathcal{F}(p)$ is a torus $\mathbb{T}^q$, $p$ is a fixed point of some element of $\mathrm{E}(M, \vec{\lambda})$. Hence it is a fixed point of the elements of an entire isotropy subgroup of $\mathrm{E}(M, \vec{\lambda})$ (as is the entire leaf $L$ to which $p$ belongs). Now, being a compact Abelian Lie group, $\mathrm{E}(M, \vec{\lambda})$ admits only a finite number of such isotropy groups, say $I_1, \cdots, I_l$. Denote by $\mathfrak{I}_1, \cdots, \mathfrak{I}_l$ the corresponding Lie algebras of Killing fields. The leaf $\mathcal{F}(p)$ is a set of collective zeros of the Killing fields in one of the algebras $\mathfrak{I}_j$. Hence it follows from \cite[Corollary 1]{kobayashi_fixed_1958} that $\mathcal{F}(p)$ is contained in an even-codimensional totally geodesic submanifold $N$ of $M$. The finite collection of all these submanifolds gives us the desired partition. Moreover, as pointed out before, $p \in N$ implies $\mathcal{F}(p) \subset N$. Hence each $N$ is a saturated submanifold of even codimension, so that Lemma \ref{restevencod} applies, allowing us to conclude that the restriction of $\vec{\lambda}$ to $N$ gives rise to a contact foliation, the leaves of which are all closed by construction. 
    
\end{proof}

\section{Conclusion}
The theory of $q$-contact structures, as dealt with here, started with Almeida's work \cite{almeida_contact_2018}. 
He focused mainly on the contact action, and at that, in the particular case when such action is partially hyperbolic. In this work, we focus instead on the perspective of the \textit{contact foliation}. We dived into the fundamental theory of contact foliations, greatly expanding the foundations laid out by Almeida.
In particular, we provided new examples, local descriptions for these foliations (Propositions \ref{lambdacoord} and \ref{lambdacoordharmonic}), canonical forms for the adapted coframe in particular cases (Propositions \ref{localRepresentationCodimension2} and \ref{localRepresentationUniformCase}), and pointed out some basic properties of the holonomy groups of various distinct foliations related to a $q$-contact structure (Proposition \ref{holonomyproperties}). 
\par Two novel results of ours are the reduction procedure of Theorem \ref{contredux} and the characterisation of uniform contact manifolds in Theorem \ref{uniformStructuresFibrations}.
The former is a partial converse to the construction of contact structures in flat toric bundles introduced by Almeida.
It allowed us to prove that non-trivial minimal contact foliations can not take place if any of the Weinstein conjectures are valid (cf. Theorem \ref{equivconjmin}), a result not all obvious at first sight.
The latter imposes substantial restrictions on the existence of uniform $q$-contact structures for $q$ greater than $2$, hinting that such foliations might not occur so frequently.
It is also our first result specifically concerning contact \emph{foliations}, in that it does not apply for contact flows, i.e., for the classic case.
It is a result nicely related to a Theorem of Goertsches and Loiudice \cite{goertsches_how_2020} stating that every metric $f$-K-contact manifold can be constructed from a K-contact manifold utilising mapping tori and the so-called ``type II'' deformations (cf. \cite[Theorem 4.4]{goertsches_how_2020}).
\par We also introduced two possible generalisations for the Weinstein conjecture in the context of contact foliation. We were able to show that the strongest of such generalisations holds true for Anosov and isometric contact actions. In particular, the results in the isometric case expand results previously known in Contact Dynamics literature, due to Rukimbira and Banyaga \cite{rukimbira_remarks_1993, banyaga_characteristics_1995}.
\par For future research, in an upcoming paper we aim to investigate the geometrical properties of quasiconformal contact foliations, as well as the existence of closed leaves, effectively weakening the hypothesis of Theorem \ref{closedorbitsriemanniancase}. This theory could also be applied to prove classification theorems form quasiconformal Anosov bundles, generalising established results from Fang \cite{fang_smooth_2004}.

\providecommand{\bysame}{\leavevmode\hbox to3em{\hrulefill}\thinspace}
\providecommand{\MR}{\relax\ifhmode\unskip\space\fi MR }
\providecommand{\MRhref}[2]{%
  \href{http://www.ams.org/mathscinet-getitem?mr=#1}{#2}
}
\providecommand{\href}[2]{#2}

\end{document}